\def\ds{\displaystyle}
\def\eps{{\varepsilon}}
\def\N{\mathbb{N}}
\def\Om{\Omega}
\def\la{\lambda}
\def\R{\mathbb{R}}
\def\F{\mathcal{F}}
\def\HH{\mathcal{H}}
\def\Dr{D}
\def\div{\mathrm{div}}
\def\qm{K}
\numberwithin{equation}{section}
\theoremstyle{plain}
\newtheorem{teo}{Theorem}[section]
\newtheorem{lemma}[teo]{Lemma}
\newtheorem{cor}[teo]{Corollary}
\newtheorem{prop}[teo]{Proposition}
\newtheorem{deff}[teo]{Definition}
\newtheorem{oss}[teo]{Remark}
\newcommand{\mean}[1]{\,-\hskip-1.08em\int_{#1}} 
\newcommand{\ind}{\mathbbm{1}}
\title{Regularity of the optimal sets for some spectral functionals}
\author{Dario Mazzoleni, Susanna Terracini, Bozhidar Velichkov}
\address{Dario Mazzoleni and Susanna Terracini: \newline \indent
 Dipartimento di Matematica ``Giuseppe Peano'', \newline \indent
Universit\`a di Torino, \newline \indent
Via Carlo Alberto, 10,
10123 Torino, Italy,} 
\email{dmazzole@unito.it, 
susanna.terracini@unito.it}
\address {Bozhidar Velichkov: \newline \indent
Laboratoire Jean Kuntzmann (LJK), Universit\'e Grenoble Alpes
\newline \indent
B\^atiment IMAG, 700 Avenue Centrale, 38401 Saint-Martin-d'H\`eres
}
\email{bozhidar.velichkov@imag.fr}
\date{\today}
\begin{document}

\begin{abstract}
In this paper we study the regularity of the optimal sets for the shape optimization problem \[
\min\Big\{\lambda_1(\Omega)+\dots+\lambda_k(\Omega)\ :\ \Omega\subset\R^d\ \text{open}\ ,\ |\Omega|=1\Big\},
\]	
where $\la_1(\cdot),\dots,\la_k(\cdot)$ denote the eigenvalues of the Dirichlet Laplacian and $|\cdot|$ the $d$-dimensional Lebesgue measure.
We prove that the topological boundary of a minimizer $\Om_k^*$ is composed of a relatively open \emph{regular part} which is locally a graph of a $C^{\infty}$ function and a closed \emph{singular part}, which is empty if $d<d^*$, contains at most a finite number of isolated points if $d=d^*$ and has Hausdorff dimension smaller than $(d-d^*)$ if $d>d^*$, where the natural number $d^*\in[5,7]$ is the smallest dimension at which minimizing one-phase free boundaries admit singularities.
	
To achieve our goal, as an auxiliary result, we shall extend for the first time the known regularity theory for the one-phase free boundary problem to the vector-valued case.
\end{abstract}

\keywords{Shape optimization, Dirichlet eigenvalues, optimality conditions, regularity of free boundaries, viscosity solutions}
\subjclass{49Q10 (35R35, 47A75, 49R05)}

\thanks{{\bf Acknowledgments.} 
D.~Mazzoleni and S.~Terracini are partially  supported ERC Advanced Grant 2013 n. 339958
{\it Complex Patterns for Strongly Interacting Dynamical Systems - COMPAT}, by the PRIN-2012-74FYK7 Grant {\it Variational and perturbative aspects of nonlinear differential problems}. B. Velichkov was partially supported by the project AGIR 2015 {\it M\'ethodes variationnelles en optimisation de formes-VARIFORM}\\
We thank Dorin Bucur  and Guido De Philippis for some useful discussions on the topic of this paper. In particular, Dorin Bucur showed us the importance of the Weiss monotonicity formula and the radial extension from Lemma~\ref{lemmadorin}. Guido De Philippis pointed the paper~\cite{dss} out to us.}

\maketitle

\tableofcontents
\section{Introduction}
Functionals involving the eigenvalues of the Laplacian are the object of a growing interest in the analysis of PDEs from Mathematical Physics. Particularly challenging are the links between the spectrum of the Laplace operator and the geometry of the domain, a typical example being the Weyl asymptotic law. In this paper we study the regularity properties of the sets $\Omega$ that minimize the sum $\lambda_1(\Omega)+\dots+\lambda_k(\Omega)$ of the first $k$ eigenvalues of the Dirichlet Laplacian among all sets of fixed volume. That is, we are interested in the solutions of the shape optimization problem 
\begin{equation}\label{introP}
\min\Big\{\lambda_1(\Omega)+\dots+\lambda_k(\Omega)\ :\ \Omega\subset\R^d\ \text{open}\ ,\ |\Omega|=1\Big\},
\end{equation}
where $\lambda_1(\Omega)\le\dots\le\lambda_i(\Omega)\le\dots\le \lambda_k(\Omega)$, for $i=1,\dots,k$, denote the eigenvalues of the Dirichlet Laplacian on the set $\Omega$ counted with the due multiplicity\footnote{We recall that on an open set of finite volume the Dirichlet Laplacian has compact resolvent and its spectrum is real and discrete.}. 

From the point of view of the shape optimization theory,  problem \eqref{introP} is a special model case of the more general spectral optimization problem \begin{equation}\label{intro1}
\min\Big\{F\big(\lambda_1(\Omega),\dots,\lambda_k(\Omega)\big)\,:\,\Omega\subset\R^d,\ |\Omega|=1\Big\},
\end{equation}
where the cost function is defined through a function $F:\R^k\to\R$. The optimization problems of the form \eqref{intro1} naturally arise in the study of physical phenomena as, for example, heat diffusion or wave propagation inside a domain $\Omega\subset\R^d$, for a detailed introduction to the topic we refer to the books \cite{bubu05,hp,h}. The solution of \eqref{intro1} is known explicitly only in the special cases $F(\lambda_1,\dots,\lambda_k)=\lambda_1$ and $F(\lambda_1,\dots,\lambda_k)=\lambda_2$. For more general functionals the existence of a solution in the class of quasi-open sets\footnote{A quasi-open set is a level set $\{u>0\}$ of a Sobolev function $u\in H^1(\R^d)$. In particular, every open set is also quasi-open.} was first proved by Buttazzo and Dal Maso in \cite{budm93} for $F$ increasing in each variable and lower semi-continuous, under the assumption that the candidate sets $\Omega$ are all contained in a bounded open set $D\subset \R^d$. This last assumption was later removed by Bucur in \cite{bulbk} and Mazzoleni and Pratelli in \cite{mp}.

The regularity of the optimal sets and of the corresponding eigenfunctions turns out to be a rather difficult issue, due to the min-max nature of the spectral cost functionals, and was an open problem since the general Buttazzo-Dal Maso existence theorem. The only known result prior to the present paper concerning the regularity of the free boundary of the optimal sets is due to Brian\c{c}on and Lamboley \cite{brla} who prove that the optimal sets for the problem 
\begin{equation}\label{lb1}
\min\big\{\lambda_1(\Omega)\,:\,\Omega\subset\Dr\ \text{open},\ |\Omega|=1\big\},
\end{equation}
in a bounded open set $D\subset\R^d$ have smooth boundary up to a set of finite $(d-1)$-dimensional Hausdorff measure. Based on the techniques introduced in the seminal paper of Alt and Caffarelli \cite{altcaf}, this result depends strongly on the fact that the first eigenvalue is the minimum of the variational problem
$$\lambda_1(\Omega)=\min\Big\{\int_{\R^d}|\nabla u|^2\,dx\,:\,u\in H^1_0(\Omega),\ \int_{\R^d} u^2\,dx=1\Big\}$$
and so the shape optimization problem \eqref{lb1} can be written as a one-phase free boundary problem
$$\min\Big\{\int_{D}|\nabla u|^2\,dx+\Lambda |\{u>0\}|\,:\,u\in H^1_0(\Dr),\ \int_{D} u^2\,dx=1\Big\},$$
where the level set $\{u>0\}$ corresponds to $\Omega$ and $\Lambda$ is a Lagrange multiplier. The extension of this result to the general case of functionals involving higher eigenvalues presents some major difficulties since the higher eigenvalues are variationally characterized through a min-max procedure and thus it is not possible to reduce the shape optimization problem \eqref{intro1} to a one-phase free boundary problem. 
Nevertheless, some properties of the optimal sets were deduced in \cite{bulbk}, \cite{mp}, \cite{bucma} and \cite{bmpv}, as for example the fact that they are bounded, have finite perimeter and Lipschitz continuous eigenfunctions. We summarize the known results for the functional $F\big(\lambda_1(\Omega),\dots,\lambda_k(\Omega)\big)=\lambda_1(\Omega)+\dots+\lambda_k(\Omega)$ in the following theorem.

\begin{teo}\label{knownstuffmainopt0}
\begin{enumerate}[(i)]
\item (Buttazzo-Dal Maso \cite{budm93}) Given a bounded open set $D\subset\R^d$, there is a solution to the shape optimization problem
\begin{equation*}
\min\Big\{\lambda_1(\Omega)+\dots+\lambda_k(\Omega)\ :\ \Omega\subset D\ \text{quasi-open},\ |\Omega|=1 \Big\}.
\end{equation*}
\item (Bucur \cite{bulbk}; Mazzoleni-Pratelli \cite{mp}) There is a solution to the shape optimization problem
\begin{equation}\label{sopBMP}
\min\Big\{\lambda_1(\Omega)+\dots+\lambda_k(\Omega)\ :\ \Omega\subset \R^d\ \text{quasi-open},\ |\Omega|=1 \Big\}.
\end{equation}
Moreover every solution $\Omega^\ast$ of \eqref{sopBMP} is bounded. 
\item (Bucur \cite{bulbk}) Every solution $\Omega^\ast$ of \eqref{sopBMP} has finite perimeter. 
\item (Bucur-Mazzoleni-Pratelli-Velichkov \cite{bmpv}) Let $\Omega^\ast$ be a solution of \eqref{sopBMP}. Then the first $k$ normalized eigenfunctions $u_1,\dots,u_k$ on $\Omega^\ast$, extended by zero over $\R^d\setminus \Omega^\ast$, are Lipschitz continuous on $\R^d$ and $\|\nabla u_i\|_{L^{\infty}}\le C_{d,k}$, for every $i=1,\dots,k$, where $C_{d,k}$ is a constant depending only on $k$ and $d$. In particular, every solution of \eqref{sopBMP} is an open set and is also a solution of \eqref{introP}.
\end{enumerate}
\end{teo}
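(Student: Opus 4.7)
Because the statement records four results from \cite{budm93,bulbk,mp,bmpv}, the plan is to sketch the argument for each item in turn.

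For (i), I would use the Buttazzo--Dal Maso $\gamma$-convergence framework: the set of capacitary measures on a bounded container $D$ is $\gamma$-compact and each map $\mu \mapsto \lambda_i(\mu)$ is $\gamma$-lower semicontinuous through Mosco convergence of the spaces $H^1_\mu$. A minimizing sequence of admissible quasi-open sets then $\gamma$-converges, up to subsequences, to a capacitary minimizer, and strict monotonicity of $\sum_i \lambda_i$ together with an exhaustion argument forces this minimizer to be concentrated on a quasi-open set $\Omega^\ast\subset D$; the volume constraint is restored by a small dilation, which only decreases each $\lambda_i$.

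For (ii), the obstacle is the absence of the container $D$. The Mazzoleni--Pratelli surgery shows that every admissible $\Omega$ can be replaced by a bounded $\widetilde\Omega$ of the same volume inside a ball $B_R$ of radius $R=R(d,k)$, with $\lambda_i(\widetilde\Omega)\le C_{d,k}\lambda_i(\Omega)$ for $i\le k$; the construction decomposes the first $k$ eigenfunctions along spherical layers and repacks the relevant pieces into a uniform ball using Faber--Krahn. Applying (i) in $B_R$ then yields a minimizer, whose boundedness follows by applying the same surgery to itself. Part (iii) comes from the shape-subsolution property of $\Omega^\ast$ for the penalized functional $\sum_i \lambda_i+\Lambda|\cdot|$, with $\Lambda$ the Lagrange multiplier of the volume constraint: testing inner variations $\widetilde\Omega=\Omega^\ast\cap\{|U|>t\}$ for the normalized eigenvector $U=(u_1,\dots,u_k)$ and integrating in $t$ via the coarea formula yields the finite perimeter bound, in the spirit of Bucur's general estimate for spectral subsolutions.

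The hard part is (iv), the Lipschitz bound on $U$ up to the free boundary $\partial\Omega^\ast$. My plan is to establish a vectorial quasi-minimality for $U$ with respect to the Alt--Caffarelli-type functional $\sum_i\int|\nabla u_i|^2+\Lambda|\{|U|>0\}|$, and then to adapt the Alt--Caffarelli scheme to this vectorial setting: nondegeneracy of $|U|$ at free-boundary points, followed by a Caffarelli-type dichotomy forcing linear growth of each $|u_i|$ near $\partial\Omega^\ast$. The subtle differences with respect to the scalar first-eigenvalue setting of Brian\c{c}on--Lamboley are the sign-changing character of the higher eigenfunctions and the orthonormality constraint, which I would absorb by boundary-Harnack comparisons between pairs of eigenfunctions and by treating the orthonormality multipliers as lower-order perturbations of the shape-derivative identity. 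Once $U$ is globally Lipschitz, $\Omega^\ast=\{|U|>0\}$ is automatically open, making it admissible---and in fact a minimizer---for the open problem \eqref{introP}.
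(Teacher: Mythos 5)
This theorem is not proved in the paper at all: it is explicitly introduced as a summary of known results, each item being attributed to \cite{budm93}, \cite{bulbk}, \cite{mp} and \cite{bmpv}, so there is no internal argument to compare yours against. Measured against the cited literature, your sketches of (i) and (iii) are faithful to the standard arguments (relaxation to capacitary measures plus $\gamma$-lower semicontinuity and monotonicity for (i); the shape-subsolution/coarea mechanism for (iii)), but (ii) and (iv) contain genuine gaps.

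In (ii), the surgery estimate you quote, $\lambda_i(\widetilde\Omega)\le C_{d,k}\,\lambda_i(\Omega)$ with a multiplicative constant, is too weak: it only shows that the infimum over sets contained in $B_R$ is at most $C_{d,k}$ times the unrestricted infimum, which does not allow you to transfer minimizers from the bounded problem to \eqref{sopBMP}. What Mazzoleni--Pratelli actually prove is that every competitor of unit measure can be replaced by one of the same measure, contained in a ball of radius $R(d,k)$, whose first $k$ eigenvalues are \emph{not increased} (up to an arbitrarily small additive error with a radius independent of that error); only this sharper form makes the reduction to (i) work. Likewise, boundedness of \emph{every} minimizer does not follow by ``applying the surgery to itself''---the surgery outputs a different set---but from the shape-subsolution analysis of \cite{bulbk}.

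In (iv), your plan is not the argument of \cite{bmpv} and, as written, is circular. You propose to absorb the sign-changing components via ``boundary-Harnack comparisons between pairs of eigenfunctions,'' but a boundary Harnack principle requires quantitative regularity of $\partial\Omega^\ast$ (NTA, or Reifenberg flatness as in Section~\ref{sec:reifenberg} of this paper), and that regularity is only obtained \emph{after} the Lipschitz continuity and non-degeneracy of $U$ are already in hand. The actual proof in \cite{bmpv} avoids any such input: it shows that each $u_i$ (through $|u_i|$, which satisfies $\Delta|u_i|+\lambda_i|u_i|\ge0$) is a quasi-minimizer of the Dirichlet energy, derives from the minimality a bound of the form $|\Delta u_i|(B_r(x_0))\le C r^{d-1}$ at free boundary points, and concludes the Lipschitz estimate from a classical lemma on functions whose Laplacian is a measure with this growth. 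If you want to keep your Alt--Caffarelli-style route, you must explain how to obtain the linear growth of the sign-changing components without boundary Harnack; this is precisely the difficulty that forces the different strategy in \cite{bmpv}.
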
 

The aim of this paper is to prove that the boundary of the optimal sets, solutions of \eqref{introP}, is regular up to a set of lower dimension, precisely we prove that $\Omega^\ast$ is $d^\ast$-regular in the sense of the following definition.
\begin{deff}\label{d*regular}
We call a set $\Om\subset \R^d$ \emph{$d^*$-regular} if $\partial \Om$ is the disjoint union of a regular part $Reg(\partial \Om)$ and a (possibly empty), singular part $Sing(\partial\Omega)$ such that: 
\begin{itemize}
\item $Reg(\partial\Omega)$ is an open subset of $\partial \Omega$ and locally a $C^{\infty}$ hypersurface of codimension one;
\item $Sing(\partial \Om)$ is a closed subset of $\partial\Omega$ and has the following properties:
\begin{itemize}
\item If $d<d^*$, then $Sing(\partial \Om)$ is empty,
\item If $d=d^*$, then the singular set $Sing(\partial \Om)$ contains at most a finite number of isolated points,
\item If $d>d^*$, then the Hausdorff dimension of $Sing(\partial \Om)$ is less than $d-d^*.$
\end{itemize} 
\end{itemize}
\end{deff}
In our work, $d^*$ is the smallest dimension at which the 
free boundaries of the local minima of scalar the one-phase functional
$$u\mapsto \int|\nabla u|^2\,dx+|\{u>0\}|,$$
admit singularities. Up to our knowledge $d^*\in[5,7]$, see~\cite{dsj} and the recent work~\cite{js}.
The main result of the paper is the following.
\begin{teo}\label{main}
Let the open set $\Om^*_k\subset\R^d$ be an optimal set for problem~\eqref{introP}. Then $\Om^*_k$ is connected and $d^*$-regular. 
Moreover the vector $U=(u_1,\dots, u_k)$ of the normalized eigenfunctions is such that $|U|$ has a $C^1$ extension on the regular part of the free boundary and satisfies the optimality condition
\begin{equation}\label{optcondintro}
\big|\nabla |U|\big|=\sqrt{\Lambda}\quad\text{on}\quad Reg(\Om^*_k),
\end{equation}
where the constant $\Lambda$ is given by $\ds\Lambda=\frac{2}{d}\sum_{i=1}^k\lambda_i(\Omega_k^\ast)$.
\end{teo}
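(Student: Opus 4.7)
The plan is to recast problem~\eqref{introP} as a vector-valued one-phase free boundary problem for the normalized eigenfunction map $U=(u_1,\dots,u_k)$ on $\Om_k^*$, and then to extend the scalar Alt-Caffarelli regularity theory to this setting through a reduction to scalar blow-up profiles. By the variational characterization of $\sum_{i=1}^k\la_i(\Om)$ and a Lagrange multiplier for the volume constraint, $U$ minimizes
\begin{equation*}
\mathcal J(V)=\sum_{i=1}^k\int_{\R^d}|\nabla v_i|^2\,dx+\Lambda\,\big|\{|V|>0\}\big|
\end{equation*}
under the orthonormality constraints $\int v_iv_j\,dx=\delta_{ij}$. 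A scaling test with the dilation $\Om\mapsto t\Om$, differentiating $t^{-2}\sum\la_i(\Om^*_k)+t^d\Lambda|\Om^*_k|$ at $t=1$, fixes the multiplier as $\Lambda=\tfrac{2}{d}\sum_{i=1}^k\la_i(\Om^*_k)$. Lipschitz continuity (Theorem~\ref{knownstuffmainopt0}(iv)) provides the upper bound $|U|(x)\le C\,\mathrm{dist}(x,\partial\Om^*_k)$, and an Alt-Caffarelli-type harmonic-replacement competitor, comparing the Dirichlet loss against the volume saving $\Lambda|B_r|$, yields the matching non-degeneracy $|U|(x)\ge c\sqrt\Lambda\,\mathrm{dist}(x,\partial\Om^*_k)$; hence uniform two-sided density estimates for $\Om^*_k$ at its free boundary.

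For $x_0\in\partial\Om^*_k$, the blow-up family $U_{x_0,r}(x)=r^{-1}U(x_0+rx)$ is then precompact in $C^{0,\alpha}_{\mathrm{loc}}$, and its limits are absolute minimizers of $\mathcal J$ on $\R^d$ because the eigenvalue source $\la_iu_i$ rescales by $r$ and the $L^2$ normalization drops out. A Weiss-type monotonicity formula for $\mathcal J$, proved by testing the minimality inequality against radial perturbations, forces every such blow-up to be $1$-homogeneous. The decisive classification step, to be carried out by a convexity/rearrangement argument, asserts that every $1$-homogeneous vector minimizer has parallel components, i.e. $U_0(x)=\varphi(x)\,e$ for some $e\in\R^k$ with $|e|=1$ and $\varphi$ a $1$-homogeneous scalar minimizer of the classical Alt-Caffarelli functional with constant $\Lambda$. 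This \emph{vector-to-scalar reduction} at blow-up level is what channels the scalar theory into the system.

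The regular part $Reg(\partial\Om^*_k)$ is defined as the set of free-boundary points whose blow-up is a half-plane $\sqrt\Lambda\,(x\cdot\nu)_+\,e$. At such points the vector extension of the flatness-implies-$C^{1,\alpha}$ theorem (the paper's announced auxiliary contribution) delivers local $C^{1,\alpha}$-regularity of $\partial\Om^*_k$; a Schauder/hodograph bootstrap applied to the eigenvalue system $-\Delta u_i=\la_iu_i$ together with the free-boundary condition $|\nabla|U||=\sqrt\Lambda$ upgrades this to $C^\infty$ and delivers the optimality relation~\eqref{optcondintro}. The singular part $Sing(\partial\Om^*_k)$ is its complement, consisting of points admitting only non-flat homogeneous blow-ups; via the vector-to-scalar reduction, Federer's dimension-reduction principle then transports the scalar bound (no non-flat $1$-homogeneous scalar minimizer exists below dimension $d^*$, see~\cite{dsj,js}) to the vectorial setting, yielding the trichotomy of Definition~\ref{d*regular}. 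Connectedness of $\Om^*_k$ follows from a separate surgery argument: a disconnected competitor can be rescaled component-by-component while preserving $|\Om|=1$ and strictly decreasing $\sum_{i=1}^k\la_i$, contradicting optimality.

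The principal obstacle is the vector-valued flatness-to-$C^{1,\alpha}$ step, since Caffarelli's viscosity machinery relies on scalar comparison principles that fail for systems. The resolution I envisage is to view $|U|$ itself as a scalar viscosity solution of the classical one-phase equation with a controlled coupling perturbation coming from the orthonormality constraints, and to run Caffarelli's partial-Harnack/flatness iteration on this scalar surrogate, then using non-degeneracy to transfer the geometric information back to each component $u_i$.
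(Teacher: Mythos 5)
Your overall architecture (Lagrange multiplier, quasi-minimality, non-degeneracy, Weiss monotonicity, blow-up classification to parallel scalar Alt--Caffarelli cones, dimension reduction for the singular set) matches the paper's, but two steps as you propose them contain genuine gaps.

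First, the connectedness argument. You claim that a disconnected optimal set can be improved by rescaling its components separately. This fails: if $\Om^*_k=\Om_1\sqcup\Om_2$ with $\Om_1$ carrying the first $l$ and $\Om_2$ the remaining $k-l$ eigenvalues, the rescaled functional is $t^{-2}A+s^{-2}B$ subject to $t^d|\Om_1|+s^d|\Om_2|=1$, and at an optimum the two pieces necessarily have equal Lagrange multipliers $\frac{2A}{d|\Om_1|}=\frac{2B}{d|\Om_2|}$, so $t=s=1$ is a critical (indeed optimal) point of this two-parameter family and no strict decrease is available. The paper's Corollary~\ref{mainconnected} instead translates the two components until they are tangent at a point, blows up there, and uses that the two blow-ups must occupy complementary half-spaces while their union would have to be a nonnegative nontrivial harmonic function vanishing at an interior point --- a contradiction with the maximum principle. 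Some such geometric argument is needed; pure scaling does not suffice.

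Second, the flatness-to-$C^{1,\alpha}$ step. Your proposed resolution --- running Caffarelli's partial Harnack/flatness iteration on $|U|$ as a ``scalar surrogate'' --- is not substantiated and is unlikely to work as stated: $|U|$ satisfies only the differential inequality $\Delta|U|+\la_k|U|\ge 0$ inside $\Om^*_k$ (Remark~\ref{Usub}), with no matching upper bound unless one already knows the components are parallel, so $|U|$ is not a viscosity solution of a clean one-phase equation to which the scalar machinery applies. The paper's route is different and this difference is essential: it first proves that $Reg(\partial\Om^*_k)$ is Reifenberg flat (Proposition~\ref{reiflatprop}), hence NTA by \cite{kt1}, then invokes the boundary Harnack principle of \cite{jk} to show that each ratio $u_i/u_1$ is H\"older continuous up to the boundary (Lemma~\ref{ntaholder1}); this converts the viscosity condition $|\nabla|U||=\sqrt\Lambda$ into a scalar one-phase condition $|\nabla u_1|=g\sqrt\Lambda$ with $g$ H\"older for the positive, non-degenerate component $u_1$ (Lemma~\ref{optu1}), to which \cite{desilva} applies. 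Likewise your ``Schauder/hodograph bootstrap'' to $C^\infty$ needs the improved boundary Harnack inequality of \cite{dss} to upgrade the regularity of $g$ at each stage before \cite{kn} can be applied; without it the bootstrap stalls at $C^{1,\alpha}$. Finally, your classification of homogeneous blow-ups by a ``convexity/rearrangement argument'' is left unspecified; the paper's mechanism is the simplicity of the first eigenvalue of the spherical Laplacian on the section $S=\partial B_1\cap\{|U_0|>0\}$ (Remark~\ref{remsphere} and Lemma~\ref{Uinftyhom2}), which forces all components to be proportional to the first spherical eigenfunction.
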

{\begin{proof}[Proof of Theorem~\ref{main}]
The fact that $\Om^*_k$ is connected will be proved in Corollary \ref{mainconnected}. The regular part of the free boundary will be the object of Proposition \ref{regularityfinal} and of Proposition~\ref{regularityfinalinfty}, while for the singular part we refer to Proposition \ref{regbdryweiss}. The extremality condition~\eqref{optcondintro} is a consequence of the optimality condition in viscosity sense (see Lemma \ref{optvisc}) and the fact that $\nabla |U|$ is well defined on the regular part of the free boundary.	
\end{proof}}

In order to prove Theorem \ref{main} we first show that the vector of eigenfunctions $U=(u_1,\dots,u_k)$ is a local quasi-minimizer of the vector-valued functional 
\begin{equation*}
H^1(\R^d;\R^k)\ni V\mapsto \int_{\R^d}|\nabla V|^2\,dx+\Lambda\big|\{|V|>0\}\big|,
\end{equation*}
that is, $U$ is a local minimizer of the functional
$$H^1(\R^d;\R^k)\ni V\mapsto \Big(1+\qm\|V-U\|_{L^1}\Big)\int_{\R^d}|\nabla V|^2\,dx+\Lambda\big|\{|V|>0\}\big|.$$
Our proofs mostly rely on the free boundary approach for this shape optimization problem, suitably modifying many seminal ideas from~\cite{rtt, altcaf, weiss99},  that we are extending for the first time to the vectorial case.
The intrinsic differences are mainly related with the vectorial nature of the variable $U$.  This causes a number of new difficulties, starting from the non-degeneracy at the boundary, the classification of conic blow-ups, the validity and consequences of the extremality condition in a proper sense.  
We first use a Weiss-like monotonicity formula to classify the boundary points through a blow-up analysis.
Then, a key point of our argument is to prove an optimality condition~\eqref{optcondintro} for $|U|$ on the boundary, which is fulfilled in a proper viscosity sense. In the scalar case  this is a well-established approach, for which classical references are \cite{Caffa1,Caffa2}, which however cannot be easily reproduced in the vectorial case.
Next, in order to reduce our problem to a scalar one, we need to compare the boundary derivatives of the different components involved in the optimality condition.
We first prove that the regular part of the free boundary is Reifenberg flat, which implies that it is an NTA domain, following the works by Kenig and Toro~\cite{kt,kt1}.
For NTA domains, Jerison and Kenig~\cite{jk} proved a boundary Harnack inequality, which is enough for our aims.
Then we are able to obtain an optimality condition which involves only $u_1$ on the regular part of the free boundary and then apply the classical results to obtain $C^{1,\alpha}$ regularity.
In order to get $C^\infty$ regularity with a bootstrap argument, we need an improved boundary Harnack principle~\cite{dss}, which allows us to use the general result by Kinderlehrer and Nirenberg~\cite{kn} on the one-phase problem for $u_1$, which otherwise would not work directly in the vectorial setting.  
Finally, the analysis of the dimension for the singular set follows as in~\cite[Section 4]{weiss99} by an adaptation of the classical arguments from the theory of minimal surfaces. \\

\noindent{\bf Further remarks and comments.} As a consequence of the regularity theory developed for vector-valued functions, we obtain an auxiliary regularity result, which better highlights the analogy with the free boundary problem studied by Alt and Caffarelli~\cite{altcaf} and Weiss~\cite{weiss99}. We note that the extension to the vectorial case that we are able to prove still requires one function to have a positive trace (and so to be positive in the interior). A major open problem, up to our knowledge, is to prove Theorem~\ref{altcafvect} with all the $\phi_i$ changing sign on $\partial D$. How to deduce Theorem~\ref{altcafvect} from our arguments is explained in Section~\ref{sec:thm1.4}.

\begin{teo}\label{altcafvect}
Let $D\subset\R^d$ be an open set with smooth boundary, $\Lambda>0$, and let $\phi_1,\dots,\phi_k\in C^0(\partial D)$ be given functions, with $\phi_1>0$ on $\partial D$. Then, there is a solution $U=(u_1,\dots,u_k)\in H^1(D;\R^k)$ to the problem
\begin{equation}\label{eqaltcafvect}
\begin{split}
\min \Big\{\int_{D}{|\nabla U|^2\,dx}+\Lambda\left|\{|U|>0\}\right|,\;U\in H^1(D;\R^k),\ u_i=\phi_i\;\mbox{on } \partial D,\ \forall i=1,\dots,k\Big\}.
\end{split}
\end{equation}
Moreover, for every solution $U=(u_1,\dots, u_k)$ the set $\{|U|>0\}$ is $d^*$-regular and the optimality condition~\eqref{optcondintro} holds on the regular part of the free boundary.
\end{teo}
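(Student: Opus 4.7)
The plan is to place Theorem~\ref{altcafvect} under the same vectorial free-boundary regularity machinery built in the body of the paper, using the hypothesis $\phi_1>0$ as the substitute for the positivity of the first normalized eigenfunction that enters in the spectral case. First I would obtain a minimizer by the direct method in $H^1(D;\R^k)$: the harmonic extension of $(\phi_1,\dots,\phi_k)$ is an admissible competitor of finite energy, so any minimizing sequence $U_n$ is bounded in $H^1$ and admits a weak limit $U$ with the correct trace. The Dirichlet term is weakly lower semicontinuous, while the measure term $V\mapsto |\{|V|>0\}|$ is lower semicontinuous under a.e.\ convergence by Fatou applied to the pointwise bound $\chi_{\{|V|>0\}}\le \liminf \chi_{\{|V_n|>0\}}$ (which is immediate from continuity of the norm); such a.e.\ convergence is available along a subsequence by Rellich. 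Next, replacing $u_1$ by $|u_1|$ does not alter $|\nabla U|^2$ nor $|U|$ and is compatible with $\phi_1>0$, so we may assume $u_1\ge 0$. Any connected component $\omega\subset\{|U|>0\}$ on which $u_1\equiv 0$ would be compactly contained in $D$ (since $\phi_1>0$ on $\partial D$), and replacing $U|_\omega$ by $0$ would strictly decrease both the Dirichlet energy and the measure; the strong maximum principle applied to $u_1$, harmonic on $\{|U|>0\}$, then forces $u_1>0$ everywhere inside the positivity set.

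Since compactly supported perturbations preserve the boundary trace, $U$ is an honest local minimizer of
\[
V\longmapsto \int_D|\nabla V|^2\,dx+\Lambda\,\big|\{|V|>0\}\big|,
\]
so the entire regularity apparatus we develop for the spectral problem applies in an even cleaner form, without needing the quasi-minimality correction $1+\qm\|V-U\|_{L^1}$. Concretely, I would invoke in order: Lipschitz regularity of $U$ and non-degeneracy of $|U|$, the Weiss-type monotonicity formula, the optimality condition $\bigl|\nabla|U|\bigr|=\sqrt{\Lambda}$ in viscosity sense, and the classification of blow-ups at free-boundary points as either half-plane one-phase profiles $\bigl(\alpha_1(x\cdot\nu)_+,\dots,\alpha_k(x\cdot\nu)_+\bigr)$ with $\sum\alpha_i^2=\Lambda$ (regular points) or homogeneous one-phase minimizing cones (singular points). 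The interior positivity $u_1>0$ guarantees $\alpha_1>0$ in every regular blow-up, which is what makes this reduction effective.

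At any regular free-boundary point, the closeness of $\partial\{|U|>0\}$ to a hyperplane provided by the blow-up classification yields Reifenberg flatness, hence the NTA property via Kenig--Toro, and the Jerison--Kenig boundary Harnack principle implies that every ratio $u_i/u_1$ is H\"older continuous up to the regular part. Substituted into the viscosity extremality condition, this reduces the free-boundary condition to a scalar problem for $u_1$, from which $C^{1,\alpha}$ regularity of $\mathrm{Reg}(\partial\{|U|>0\})$ follows by the Alt--Caffarelli argument; the $C^{\infty}$ bootstrap and~\eqref{optcondintro} on the regular part are then obtained via the improved boundary Harnack of De Silva--Savin combined with the Kinderlehrer--Nirenberg hodograph trick. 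For the singular part, the Weiss monotonicity formula together with a Federer-type dimension reduction identifies the tangent objects at singular points with scalar one-phase minimizing cones (again because $u_1$ is positive on the support of the blow-up), whose smallest dimension of existence is by definition $d^*$; this yields the claimed bounds on $\mathrm{Sing}(\partial\{|U|>0\})$.

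The hard part, and the only essential use of $\phi_1>0$, is precisely the step that produces a single component positive on the whole of $\{|U|>0\}$: it is what enables both the boundary Harnack reduction and the identification of singular tangent cones with scalar ones. Relaxing this hypothesis would require a genuinely vectorial boundary Harnack principle and a classification of sign-changing homogeneous vector-valued minimizers, which is the open problem flagged in the introduction and is not accessible with the present approach.
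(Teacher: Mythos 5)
Your proposal is correct and follows essentially the same route as the paper's Section~\ref{sec:thm1.4}: direct method for existence, reduction to an honest local minimizer (so that the quasi-minimality correction is unnecessary), and then the full vectorial machinery of the spectral case (Lipschitz regularity, non-degeneracy, Weiss monotonicity, blow-up classification, viscosity optimality, Reifenberg flatness and the NTA/boundary Harnack reduction to a scalar one-phase problem for $u_1$, De Silva and De Silva--Savin for $C^{1,\alpha}$ and $C^\infty$, and Weiss-type dimension reduction for the singular set). The only point worth making explicit is that the effectiveness of the reduction to $u_1$ rests on the quantitative bound $|U|\le C u_1$ (the analogue of Lemma~\ref{nondegu1}), not merely on the interior positivity of $u_1$, but this is part of the non-degeneracy package you already invoke.
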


\begin{oss}
We highlight that in Theorem~\ref{altcafvect} above, the hypothesis $\phi_1>0$  is not the optimal one. In fact it is sufficient to suppose that, in each connected component of the open set $\{|U|>0\}$, there is at least one component $u_i$ of the vector $U$ which is positive. This holds for example if all $\phi_i$ are non-negative (as it is required in~\cite{csy}).
\end{oss}

Our results can be extended to the case of smooth functionals $F(\lambda_1,\dots,\lambda_k)$ which are invariant under permutations of the variables and non-decreasing in each variable.  The sum of powers of the first $k$ eigenvalues for example is of great interest also from the point of view of applications to the Lieb--Thirring theory, as it is explained by Lieb and Loss in~\cite[Chapter 12]{ll}, and it can be considered a more natural functional to study than the lone $\la_k$, when one has in mind, for example, the Lieb--Thirring inequalities. An extension of Theorem~\ref{main} to more general functionals of eigenvalues of the form~\eqref{intro1} (still involving $\la_1$) can be proved starting from the techniques of this work with some careful approximation procedures and will be the object of a forthcoming paper.

An alternative approach to the regularity of its solutions would be to see \eqref{introP} as a two-partition problem of $\R^d$ with the Lebesgue measure being the cost functional for one of the two competing populations and the sum of the eigenvalues the cost functional for the other one. Indeed, functionals involving higher eigenvalues were successfully treated in the framework of the  \emph{optimal partition problems}, for example in the recent work~\cite{rtt} (see also~\cite{tt}), where it is proved the existence of an optimal \emph{regular} partition, i.e. with free boundary that is $C^{1,\alpha}$ regular, up to a set of Hausdorff dimension less than $d-2$. Unfortunately, some key techniques used for partitions fail when dealing with~\eqref{introP}. For example, we are not able to establish an Almgren monotonicity formula, which is one of the principal tools used in \cite{rtt}. This is due, mainly, to the measure term, which does not seem to behave well with the quantities involved in the Almgren quotient.

As it was proved in \cite{bulbk} an optimal set $\Omega_k^\ast$ for \eqref{introP} has finite perimeter $P(\Omega_k^\ast)<\infty$. This means that there is a constant $P>0$ such that $\Omega_k^\ast$ is also a solution to the problem
\begin{equation*}
\min\Big\{\lambda_1(\Omega)+\dots+\lambda_k(\Omega)\ :\ \Omega\subset\R^d\ ,\ |\Omega|=1,\ P(\Omega)=P\Big\}.
\end{equation*}
Unfortunately, up to our knowledge, there is no way to directly replace the condition $P(\Omega)=P$ by a (non-zero) Lagrange multiplier or to reasonably approximate $\Omega_k^\ast$ by optimal sets for the functional $\lambda_1(\Omega)+\dots+\lambda_k(\Omega)+\Lambda P(\Omega)$, for which a regularity theory was developed in \cite{deve} (see also~\cite{bucma}).

\begin{oss}
The study of the optimal sets for the problem \eqref{introP} might suggest a new approach to some inequalities involving the spectrum of the Dirichlet Laplacian, as the well-known Li-Yau inequality \cite{liyau}, or to more refined lower bounds on $\lambda_1(\Omega)+\dots+\lambda_k(\Omega)$ in terms of the geometry of $\Omega$, as for example the ones suggested by the Weyl's asymptotic expansion. 
\end{oss}

{\noindent{\bf Plan of the paper.} In Section~\ref{sec:nondeg} we deal with the quasi-minimality of the eigenfunctions for a more general free boundary problem and then we provide some non-degeneracy and density estimates.
In Section~\ref{ssmono} we prove a monotonicity formula in the spirit of Weiss \cite{weiss99}.
In Section~\ref{sec:blowup} we perform the analysis of the blow-up limits and prove their optimality and $1$-homogeneity.
Finally, in Section~\ref{sec:regularity} we are ready to prove the regularity of the free boundary. We study the optimality condition in the viscosity sense, we identify the regular and singular part of the topological boundary and then we reduce ourselves to a problem with only one non-negative function and apply the regularity result for the classical Alt-Caffarelli free boundary problem.
At the end we provide the estimates on the Hausdorff dimension for the singular part of the boundary.
Section~\ref{sec:thm1.4} is devoted to highlight how with a similar scheme also Theorem~\ref{altcafvect} can be proved.}\\

\noindent {\bf Note.} 
After the submission and the upload on arXiv of this paper, we discovered the preprint~\cite{csy} by Caffarelli-Shahgholian-Yeressian, which appeared few days before ours. Our Theorem~\ref{altcafvect} is very similar to their main result, which requires the additional hypothesis that all $\phi_i$ are non-negative.
We stress that the two teams agreed that they worked in a completely independent way.

A recent preprint~\cite{kl} by Kriventsov-Lin appeared on arXiv few days later than ours. It contains a result similar to our Theorem~\ref{main}, for a slightly more general class of functionals. We point out that our result is stronger: whereas we prove $C^{\infty}$ regularity of the free boundary, up to a $d-5$ dimensional set, they prove only $C^{1,\alpha}$ regularity up to a $d-3$ dimensional set, with completely different techniques.

\medskip
\medskip
\noindent {\bf Preliminaries and notations.} We will denote by $d$ the dimension of the space and by $C_d$ a generic constant depending only on the dimension. For $x=(x_1,\dots,x_d)\in \R^d$ and $r>0$ we will denote by $B_r(x)$ the ball centered in $x$ of radius $r$ with respect to the Euclidean distance $|y|=(y_1^2+\dots+y_d^2)^{1/2}$. We will use the notation $B_r$, when the ball is centered in zero. For a generic measurable set $\Omega\subset\R^d$, by $|\Omega|$ we denote the Lebesgue measure of $\Omega$, while for the measure of the unit ball $B_1\subset\R^d$ we will use the notation $\omega_d$. 
For a point $x_0\in\R^d$ we recall that the density of the measurable set $\Omega$ in $x_0$ is given by 
$$\lim_{r\to0}\frac{|\Omega\cap B_r(x_0)|}{|B_r|},$$
whenever the above limit exists. We recall the classical notation
\begin{equation*}
\Omega^{(\gamma)}:=\Big\{x_0\in\R^d\ :\ \lim_{r\rightarrow 0}{\frac{|\Omega\cap B_r(x_0)|}{|B_r|}}=\gamma\Big\},
\end{equation*}
for the set of point of density $\gamma\in[0,1]$.
For $\alpha>0$ we will denote by $\HH^\alpha$ the $\alpha$-dimensional Hausdorff measure, for example the surface area of the unit sphere is $\HH^{d-1}(\partial B_1)=d\omega_d$. By $d_{\HH}(A,B)$ we denote the Hausdorff distance between the sets $A,B\subset\R^d$, 
\begin{equation*}
d_{\HH}(A,B):=\max{\left\{\sup_{a\in A}{\{\text{dist}(a,B)\}};\sup_{b\in B}{\{\text{dist}(b,A)\}}\right\}},
\end{equation*}
where for $x\in\R^d$ and $A\subset\R^d$ we set $\text{dist}(x,A)=\inf_{y\in A}|x-y|$.

For an open set $\Omega\in \R^d$ we denote with $H^1_0(\Omega)$ the Sobolev space obtained as a closure of the smooth real-valued functions with compact support $C^\infty_c(\Omega)$ with respect to the Sobolev norm $\ds\|u\|_{H^1}=\left(\int_\Omega |\nabla u|^2\,dx+\int_\Omega u^2\,dx\right)^{1/2}$. For a vector valued function $U=(u_1,\dots,u_k):\Omega\to\R^k$ we will say that $U\in H^1_0(\Omega;\R^k)$ if all of its components are Sobolev, $u_i\in H^1_0(\Omega)$ for every $i=1,\dots,k$,. Thus we have 
$$|U|^2=u_1^2+\dots+u_k^2\ ,\quad |\nabla U|^2=|\nabla u_1|^2+\dots+|\nabla u_k|^2\quad\text{and}\quad \|U\|_{H^1}=\left(\int_\Omega |\nabla U|^2\,dx+\int_\Omega |U|^2\,dx\right)^{1/2}.$$ 
If $\Omega=\R^d$, then the index zero will be omitted and we will use the usual notations $H^1(\R^d)$ and $H^1(\R^d;\R^k)$, for the vector-valued functions. Moreover, we will suppose that all the Sobolev functions $u\in H^1_0(\Omega)$ and $U\in H^1_0(\Omega;\R^k)$ are extended by zero outside $\Omega$. Thus $H^1_0(\Omega;\R^k)\subset H^1(\R^d;\R^k)$. 

Let $\Omega\subset\R^d$ be an open set of finite Lebesgue measure $|\Omega|<\infty$. The spectrum $\sigma(\Omega)$ of the Dirichlet Laplacian on $\Omega$ is given by an increasing sequence $\lambda_1(\Omega)\le \lambda_2(\Omega)\le\dots\le\lambda_k(\Omega)\le\dots$, of strictly positive, non-necessarily distinct real numbers. We call the elements of $\sigma(\Omega)$ eigenvalues and we count them with the due multiplicity. A real number $\lambda$ is an eigenvalue if there exists a non-trivial function $u\in H^1_0(\Omega)$ (an eigenfunction) solution of the equation 
$$-\Delta u=\lambda u\quad\text{in}\quad\Omega\ ,\qquad u\in H^1_0(\Omega)\ ,\qquad \int_\Omega u^2\,dx=1.$$
We will denote by $u_k$ the eigenfunction corresponding to the eigenvalue $\lambda_k(\Omega)$. The family of eigenfunctions $\{u_k\}_{k\in\N}$ form a (complete) orthonormal system in $L^2(\Omega)$, that is, 
$$\int_\Omega u_i u_j\,dx=\delta_{ij}:=\begin{cases}1,\ \text{if}\ i=j,\\ 0,\ \text{if}\ i\neq j.\end{cases}$$
The supremum of an eigenfunction on a set $\Omega$ can be estimated by a power of the corresponding eigenvalue independently on the regularity and the geometry of $\Omega$. The following estimate was proved in  \cite[Example 2.1.8]{davies}
$$\|u_k\|_{L^{\infty}(\R^d)}\le e^{1/8\pi}\lambda_k(\Omega)^{d/4}.$$
First of all we use capital letters for denoting vectors of functions like $U=(u_1,\dots, u_k)$ and we denote by $\Om_U:=\left\{x\in\R^d\;:\;|U(x)|>0\right\}$.

The eigenvalues of the Dirichlet Laplacian on $\Omega$ can be variationally characterized by the following min-max principle
$$\lambda_k(\Omega)=\inf_{S_k\subset H^1_0(\Omega)}\ \sup_{S_k\setminus \{0\}}\ \frac{\int_\Omega |\nabla u|^2\,dx}{\int_\Omega u^2\,dx},$$
where the infimum is over all $k$-dimensional linear subspaces $S_k$ of $H^1_0(\Omega)$. Thus, for $\lambda_1(\Omega)$ we have 
$$\lambda_1(\Omega)=\inf_{u\in H^1_0(\Omega)\setminus \{0\}}\ \frac{\int_\Omega |\nabla u|^2\,dx}{\int_\Omega u^2\,dx}.$$
A similar variational formulation, involving vector-valued functions, holds for the sum of the first $k$ eigenvalues (see for example \cite{ll} or \cite{rtt})
\begin{equation}\label{somme}
\sum_{i=1}^k\lambda_i(\Omega)=\min\Big\{\int_\Omega |\nabla U|^2\,dx\ :\ U=(u_1,\dots,u_k)\in H^1_0(\Omega;\R^k),\ \int_\Omega u_i u_j\,dx=\delta_{ij}\Big\},
\end{equation}
the minimum being attained for the vector $U$ whose components are the first $k$ normalized eigenfunctions on $\Omega$. 

Viewed as a a functional over the family of open sets, $\lambda_k(\cdot)$ is decreasing with respect to the set inclusion and is homogeneous of order $-2$, i.e. we have that for any $t>0$
\begin{equation}\label{homoeigen}
\lambda_k(t\Omega)=\frac{1}{t^2}\lambda_k(\Omega)\qquad\text{and}\qquad\sum_{i=1}^k\lambda_i(t\Omega)=\frac1{t^2}\sum_{i=1}^k\lambda_i(\Omega),\end{equation}
where, as usual, we denote by $t\Omega$ the set $\ds t\Omega:=\{x\in\R^d\ :\ \frac{x}{t}\in\Omega\}.$

\section{Properties of the eigenfunctions on the optimal sets}\label{sec:nondeg}

In this section we study the normalized eigenfunctions on an optimal set for problem~\eqref{introP}. We will denote by $\Omega$ a solution of \eqref{introP} and by $U$ the corresponding vector of normalized eigenfunctions on $\Omega$, $U=(u_1,\dots,u_k)$. We also set
$\ds\Lambda:=\frac2d\sum_{i=1}^k\lambda_i(\Omega).$ 

In subsection \ref{sec:eigen} we will show that $U$ is a local quasi-minimizer of a variational problem in the sense of the following proposition.
\begin{prop}[Minimality of $U$]\label{quasimin_brno}
Suppose that the set $\Omega \subset\R^d$ is a solution to the shape optimization problem \eqref{introP}. Then the vector $U=(u_1,\dots,u_k)\in H^1_0(\Omega;\R^k)$ of normalized eigenfunctions on $\Omega$ satisfies the following quasi-minimality condition:
\begin{equation}\label{Uoptcond}
\begin{split}
&\text{There are constants $\qm>0$ and $\eps>0$ such that}\\ 
&\int_{\R^d}|\nabla U|^2\,dx+\Lambda\big|\{|U|>0\}\big|\le \Big(1+\qm\|U-\widetilde U\|_{L^1}\Big)\int_{\R^d}|\nabla \widetilde U|^2\,dx+\Lambda\big| \{|\widetilde U|>0\}\big|,\\
&\text{for every}\quad \widetilde U\in H^1(\R^d;\R^k)\quad\text{such that}\quad \|\widetilde U\|_{L^\infty}\le \eps^{-1}\quad\text{and}\quad \|U-\widetilde U\|_{L^1}\le \eps. 
\end{split}
\end{equation}
\end{prop}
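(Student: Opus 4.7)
The plan is to compare $\Omega$ with the rescaled set $m^{-1/d}\widetilde\Omega$, where $\widetilde\Omega:=\{|\widetilde U|>0\}$ and $m:=|\widetilde\Omega|$, and then to apply the Rayleigh--Ritz characterization \eqref{somme} of $\sum_i\lambda_i(\widetilde\Omega)$ using $\widetilde U$ as a quasi-orthonormal test tuple. Write $\delta:=m-1$. Since $m^{-1/d}\widetilde\Omega$ has unit measure, the optimality of $\Omega$ in \eqref{introP} together with the homogeneity \eqref{homoeigen} yields $\sum_{i=1}^k\lambda_i(\Omega)\le m^{2/d}\sum_{i=1}^k\lambda_i(\widetilde\Omega)$, and concavity of $t\mapsto t^{2/d}$ gives $m^{2/d}\le 1+\frac{2}{d}\delta$.

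Before invoking \eqref{somme} I would first reduce to the regime $\|\widetilde U\|_{L^\infty}\le M:=\|U\|_{L^\infty}$ (finite by Theorem~\ref{knownstuffmainopt0}(iv)) via the component-wise truncation $\widetilde u_i\mapsto\mathrm{sign}(\widetilde u_i)\min\{|\widetilde u_i|,M\}$. This truncation preserves $\{|\widetilde U|>0\}$, cannot increase $\int|\nabla\widetilde U|^2$, and (since $|u_i|\le M$ pointwise) cannot increase $\|U-\widetilde U\|_{L^1}$; so if \eqref{Uoptcond} is established for the truncated tuple it follows for $\widetilde U$. From now on I assume $\|\widetilde U\|_{L^\infty}\le M$. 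I then introduce the Gram matrices $A_{ij}:=\int\widetilde u_i\widetilde u_j\,dx$ and $B_{ij}:=\int\nabla\widetilde u_i\cdot\nabla\widetilde u_j\,dx$. The orthonormality of $U$, the identity $\widetilde u_i\widetilde u_j-u_iu_j=u_i(\widetilde u_j-u_j)+u_j(\widetilde u_i-u_i)+(\widetilde u_i-u_i)(\widetilde u_j-u_j)$, and the uniform $L^\infty$ bounds give $|A_{ij}-\delta_{ij}|\le 3M\|U-\widetilde U\|_{L^1}$, whence $\|A-I\|_{\mathrm{op}}\le C_M\|U-\widetilde U\|_{L^1}$. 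Choosing $\eps$ small enough (depending on $M$, $k$, $d$) makes $\|A-I\|_{\mathrm{op}}\le\tfrac{1}{2}$, so $A$ is invertible with $\|A^{-1}-I\|_{\mathrm{op}}\le C_0\|U-\widetilde U\|_{L^1}$. The rescaled tuple $A^{-1/2}\widetilde U\in H^1_0(\widetilde\Omega;\R^k)$ is orthonormal in $L^2$ and admissible in \eqref{somme}, so, using $B\succeq 0$,
\[
\sum_{i=1}^k\lambda_i(\widetilde\Omega)\le\int|\nabla(A^{-1/2}\widetilde U)|^2\,dx=\trace(A^{-1}B)\le(1+C_0\|U-\widetilde U\|_{L^1})\int|\nabla\widetilde U|^2\,dx.
\]

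Chaining the two bounds and setting $\alpha:=\frac{2}{d}\delta$, $\beta:=C_0\|U-\widetilde U\|_{L^1}$ gives $\sum_i\lambda_i(\Omega)\le(1+\alpha)(1+\beta)\int|\nabla\widetilde U|^2\,dx$. Since $|\{|U|>0\}|=1$ and $\Lambda\delta=\alpha\sum_i\lambda_i(\Omega)$, the inequality \eqref{Uoptcond} is equivalent to $(1-\alpha)\sum_i\lambda_i(\Omega)\le(1+\beta)\int|\nabla\widetilde U|^2\,dx$. When $1-\alpha\ge 0$, multiplying the chained bound by $(1-\alpha)$ and using $(1-\alpha)(1+\alpha)=1-\alpha^2\le 1$ is exactly what is needed; when $1-\alpha<0$ the left-hand side is nonpositive and the inequality is automatic. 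This gives \eqref{Uoptcond} with $\qm:=C_0$.

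The main technical obstacle is the Gram-matrix estimate: without the preliminary $L^\infty$-truncation, the raw hypothesis $\|\widetilde U\|_{L^\infty}\le\eps^{-1}$ alone gives only $\|A-I\|_{\mathrm{op}}\lesssim 1$, which is far too crude for a Rayleigh--Ritz perturbation argument. The truncation step is therefore essential to place the competitor into the regime where $\|A-I\|_{\mathrm{op}}$ is genuinely of order $\|U-\widetilde U\|_{L^1}$, after which the perturbation analysis becomes standard.
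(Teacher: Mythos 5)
Your proof is correct, and it follows the same two--stage strategy as the paper --- first trade the volume constraint for the Lagrange multiplier $\Lambda=\frac2d\sum_i\lambda_i(\Omega)$ by a scaling comparison, then make the competitor admissible for the Rayleigh--Ritz formula \eqref{somme} at the cost of a factor $1+O(\|U-\widetilde U\|_{L^1})$ in the Dirichlet energy --- but the second stage is executed quite differently. The paper orthonormalizes $\widetilde U$ by Gram--Schmidt and controls the resulting energy through a rather long induction on $k$ (Lemma~\ref{lemmaorto}), with constants depending on an a priori $L^\infty$ bound on the competitor; you instead use the symmetric orthonormalization $A^{-1/2}\widetilde U$ and reduce the energy estimate to the one-line trace inequality $\trace(A^{-1}B)\le(1+\|A^{-1}-I\|_{\mathrm{op}})\trace(B)$ for $B\succeq 0$, which is cleaner and makes the dependence of $\qm$ on the data transparent. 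Your preliminary truncation at level $M=\|U\|_{L^\infty}$ is also a genuine improvement: it decouples $\qm$ from the bound $\eps^{-1}$ on $\|\widetilde U\|_{L^\infty}$ (in Lemma~\ref{lemmaorto} the constants do depend on that bound), and in fact shows that the $L^\infty$ hypothesis in \eqref{Uoptcond} is essentially superfluous. Two small points you should make explicit: (i) the competitor $\widetilde\Omega=\{|\widetilde U|>0\}$ is only quasi-open, so the comparison $\sum_i\lambda_i(\Omega)\le m^{2/d}\sum_i\lambda_i(\widetilde\Omega)$ uses that $\Omega$ also solves the quasi-open problem \eqref{sopBMP} (Theorem~\ref{knownstuffmainopt0}) and that \eqref{somme} holds for quasi-open sets of finite measure --- the paper's own passage from Lemma~\ref{laglem} to \eqref{sommeV} relies on the same facts; (ii) the linearization $m^{2/d}\le 1+\frac2d(m-1)$ is concavity of $t\mapsto t^{2/d}$ and hence needs $d\ge2$, the only case of interest here (the paper's version, minimality of $t\mapsto t^{-2}\sum_i\lambda_i(\Omega)+t^d\Lambda$ at $t=1$, is dimension-free).
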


In subsection \ref{sec:nondeg2} we will use Proposition \ref{quasimin_brno} to show that the vector of the eigenfunctions on the optimal set does not degenerate at the free boundary. The following proposition describes the behavior of the eigenfunctions close to the boundary. We notice that the first claim is simply a restatement of Theorem \ref{knownstuffmainopt0} (iv).
\begin{prop}[Boundary behavior of the eigenfunctions]\label{prop:nondeg}
Let $\Omega$ be optimal for \eqref{introP} and let $U=(u_1,\dots,u_k)\in H^1_0(\Omega;\R^k)$ be the vector of the first $k$ normalized eigenfunctions on $\Omega$. 
\begin{enumerate}
\item The vector-valued function $U:\R^d\to\R^k$ is Lipschitz continuous on $\R^d$. 
\item The real-valued function $|U|$ is non-degenerate, i.e. there are constants $c_0>0$ and $r_0>0$ such that for every $x_0\in\R^d$ and $r\in(0,r_0]$ the following implication holds
\begin{equation*}
\Big(\mean{B_r(x_0)}{|U|\,dx}<c_0 r\Big)\Rightarrow \Big(U\equiv 0\ \ \text{in}\ \ B_{r/2}(x_0)\Big).
\end{equation*}
\item The first eigenfunction $u_1$ is non-degenerate, i.e. there are constants $c_0>0$ and $r_0>0$ such that for every $x_0\in\R^d$ and $r\in(0,r_0]$ the following implication holds
\begin{equation*}
\Big(\mean{B_r(x_0)}{u_1\,dx}<c_0 r\Big)\Rightarrow \Big(u_1\equiv 0\ \ \text{in}\ \ B_{r/2}(x_0)\Big).
\end{equation*}
\end{enumerate}
\end{prop}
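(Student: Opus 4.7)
Statement~(1) is merely a restatement of Theorem~\ref{knownstuffmainopt0}(iv). The proofs of~(2) and~(3) both follow the classical Alt--Caffarelli non-degeneracy paradigm: one plugs a cutoff test function into the quasi-minimality condition~\eqref{Uoptcond} of Proposition~\ref{quasimin_brno} and shows that the Dirichlet cost of the perturbation is dominated by the measure saving whenever the relevant average on $B_r(x_0)$ is small.

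For~(2), my plan is to use as competitor $\widetilde U:=\phi_r U$, where $\phi_r$ is a radially symmetric Lipschitz cutoff equal to $0$ on $B_{r/2}(x_0)$, equal to $1$ outside $B_r(x_0)$, with $|\nabla \phi_r|\le 2/r$. The measure term drops by exactly $|\{|U|>0\}\cap B_{r/2}(x_0)|$, while expanding
\begin{equation*}
|\nabla(\phi_r U)|^2=\phi_r^2|\nabla U|^2+\phi_r\nabla\phi_r\cdot\nabla |U|^2+|U|^2|\nabla\phi_r|^2,
\end{equation*}
and integrating by parts on the middle term, the increase in Dirichlet energy is bounded by a quantity of order $\int_{B_r\setminus B_{r/2}(x_0)}|U|^2/r^2\,dx$. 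The smallness hypothesis $\mean{B_r(x_0)}|U|\,dx<c_0 r$ combined with the Lipschitz bound from~(1) yields $\|U\|_{L^\infty(B_r(x_0))}\le C'c_0 r$, so the Dirichlet cost is of order $c_0^2 r^d$. Choosing $c_0$ small enough (depending on $d$, $\Lambda$, the Lipschitz constant and the constant $\qm$ in~\eqref{Uoptcond}) makes the $\Lambda$-weighted measure saving dominate, forcing $|\{|U|>0\}\cap B_{r/2}(x_0)|=0$, as claimed.

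Statement~(3) is the most delicate one and is the step I expect to be the main obstacle. The natural choice $\widetilde U:=(\phi_r u_1, u_2, \dots, u_k)$ yields the same Dirichlet-cost estimate as above (localized to $u_1$), but the positivity set of $\widetilde U$ need not shrink: a point $x\in B_{r/2}(x_0)$ with $u_1(x)>0$ still lies in $\{|\widetilde U|>0\}$ as soon as some $u_i(x)$ with $i\ge 2$ is nonzero there. To overcome this I would exploit the positivity of $u_1$ and the fact that $u_1$ is the principal eigenfunction on each connected component of $\{u_1>0\}$: by restricting the perturbation in~\eqref{Uoptcond} to variations in $u_1$ alone and carefully tracking the effect on the joint support, one can show that $u_1$ is itself a scalar quasi-minimizer of an Alt--Caffarelli type one-phase functional on its own support, with an effective Lagrange multiplier comparable to $\Lambda$. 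Once this scalar quasi-minimality is available, the non-degeneracy of $u_1$ follows from the classical argument of~\cite{altcaf}, adapted exactly as in~(2).
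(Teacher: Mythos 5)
Part (1) is indeed just Theorem~\ref{knownstuffmainopt0}~(iv), but both (2) and (3) contain genuine gaps.

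In (2) the concluding step does not close. With the competitor $\phi_r U$ the quasi-minimality inequality gives, after your integration by parts,
\begin{equation*}
\Lambda\big|\{|U|>0\}\cap B_{r/2}(x_0)\big|\;\le\;\frac{C}{r^2}\int_{B_r(x_0)\setminus B_{r/2}(x_0)}|U|^2\,dx+O(c_0r^{d+1})\;\le\;\frac{C\|U\|_{L^\infty(B_r)}^2}{r^{2}}\,\big|\{|U|>0\}\cap B_r(x_0)\big|+O(c_0r^{d+1}).
\end{equation*}
The measure you save sits in $B_{r/2}$ while the Dirichlet cost is supported on the annulus, so nothing can be absorbed: for fixed small $c_0$ you obtain only a density bound of the form $|\{|U|>0\}\cap B_{r/2}(x_0)|\le C c_0^{2\gamma}r^d$, not that this set is empty. ``Making the measure saving dominate'' is not a valid deduction, since the two sides measure the positivity set in different regions. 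What actually forces vanishing --- and what the paper's Lemma~\ref{nondegcaflemma} does --- is a \emph{self-improving} inequality: the competitor is taken equal to zero on the inner ball and truncated by an explicit barrier $\eta$ on the annulus; integrating by parts against $\eta$ produces a flux term $\int_{\partial B_r}|u_i|\,d\HH^{d-1}$, which is then bounded via the $W^{1,1}$ trace inequality by a small multiple of the full energy $E(U,B_r)=\int_{B_r}|\nabla U|^2+\Lambda|\{|U|>0\}\cap B_r|$ itself; this yields $E(U,B_r)\le\theta E(U,B_r)$ with $\theta<1$, hence $E(U,B_r)=0$. (A minor further inaccuracy: Lipschitz continuity upgrades $\meantext{B_r(x_0)}|U|\,dx<c_0r$ only to $\|U\|_{L^\infty(B_r)}\le Cc_0^{1/(d+1)}r$, not to $Cc_0r$; the paper instead passes from the mean to the sup via the almost-subharmonicity of $|U|$, Remark~\ref{Usub}.)

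In (3) you have correctly identified the obstruction (lowering $u_1$ does not shrink $\{|U|>0\}$), but the proposed remedy is not substantiated and is doubtful as stated: on the region where some $u_i$ with $i\ge2$ is nonzero, a perturbation of $u_1$ alone produces no measure saving at all, so the ``effective Lagrange multiplier'' of your scalar one-phase problem degenerates to zero precisely where you need it. The paper's route is different: it first proves $|\nabla U|^2\ge c>0$ on a strip near $\partial\Omega$ (Lemma~\ref{nondegdu}, via harmonic replacement and the non-degeneracy of $|U|$), and uses this to establish the pointwise comparison $|U|\le Cu_1$ on $\Omega$ (Lemma~\ref{nondegu1}): the function $v=|U|+|U|^2/2$ is subharmonic on the strip thanks to the gradient bound, $u_1$ is superharmonic, and the connectedness of $\Omega$ bounds $u_1$ from below away from the boundary, so the maximum principle gives $Mu_1\ge v\ge|U|$. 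Statement (3) is then an immediate corollary of (2). Some comparison of this type between $u_1$ and $|U|$ is the genuinely new ingredient missing from your sketch.
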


As a corollary of Proposition \ref{prop:nondeg} we obtain that the optimal sets for \eqref{introP} satisfy a density estimate. 

\begin{cor}[Density estimate]\label{densestcor}
Let $\Omega$ be optimal for \eqref{introP}. Then $\Omega=\{|U|>0\}$ and there are constants $\eps_0$, $r_0$ and $\delta$ such that:
\begin{enumerate}
\item  The following density estimate holds: 
\begin{equation*}
\eps_0|B_r|\le \big|\Omega\cap B_r(x_0)\big|\le (1-\eps_0)|B_r|,\quad\text{for every}\quad x_0\in \partial\Omega\quad\text{and}\quad r\le r_0.   
\end{equation*}
\item For every $x_0\in\partial\Omega$ and $r\le r_0$ there is a point $x_1\in \partial B_{r/2}(x_0)$ such that $B_{\delta r}(x_1)\subset\Omega$.
\end{enumerate}
\end{cor}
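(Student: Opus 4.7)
The plan is to derive the three claims of the corollary from Proposition~\ref{prop:nondeg}, using in addition the quasi-minimality of Proposition~\ref{quasimin_brno} for the outer density bound.

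\emph{The identity $\Omega=\{|U|>0\}$.} The inclusion $\{|U|>0\}\subseteq\Omega$ is immediate from the Lipschitz continuity of $U$ and the fact that $U\equiv 0$ outside the open set~$\Omega$. For the reverse, I argue that no connected component $\omega$ of $\Omega$ can have all of $u_1,\dots,u_k$ vanishing on it: otherwise $\Omega\setminus\omega$ would still carry the first $k$ normalised eigenfunctions of~$\Omega$ and a rescaling to unit measure via~\eqref{homoeigen} would strictly decrease $\sum_{i=1}^k\lambda_i$, contradicting the optimality of~$\Omega$. Consequently every component of~$\Omega$ supports its own first eigenfunction among $u_1,\dots,u_k$, and by the strong maximum principle this eigenfunction is strictly positive on the component; hence $|U|>0$ throughout~$\Omega$.

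\emph{Lower density and corkscrew.} Fix $x_0\in\partial\Omega$. The Lipschitz continuity of $|U|$ together with $|U(x_0)|=0$ yields $|U|\le Lr$ on $B_r(x_0)$, while Proposition~\ref{prop:nondeg}(2) excludes the alternative $U\equiv 0$ on $B_{r/2}(x_0)$ and hence gives $\mean_{B_r(x_0)}|U|\,dx\ge c_0r$. Since $U$ vanishes outside~$\Omega$,
$$c_0r\,|B_r|\;\le\;\int_{B_r(x_0)}|U|\,dx\;=\;\int_{\Omega\cap B_r(x_0)}|U|\,dx\;\le\;Lr\,|\Omega\cap B_r(x_0)|,$$
which is the lower density bound with $\eps_0\le c_0/L$. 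The same two inequalities force the existence of a point $y\in B_{r/2}(x_0)$ with $|U(y)|\ge c_1r$ for some $c_1=c_1(c_0,L,d)>0$, and Lipschitz continuity places $B_{c_1r/(2L)}(y)$ inside $\{|U|>0\}=\Omega$; a simple geometric adjustment relocates the centre to a point of $\partial B_{r/2}(x_0)$ at the cost of shrinking $\delta$ by a fixed factor.

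\emph{Upper density, the main obstacle.} Here I invoke the quasi-minimality~\eqref{Uoptcond} with the competitor $V=(v_1,u_2,\dots,u_k)$, where $v_1$ coincides with $u_1$ outside $B_\rho(x_0)$ and is the harmonic extension of $u_1|_{\partial B_\rho(x_0)}$ inside. Here $\rho\in[r/2,r]$ is a well-chosen radius for which a sphere-averaging argument based on Proposition~\ref{prop:nondeg}(3) gives $\mean_{\partial B_\rho}u_1\,d\sigma\ge c'r$; the maximum principle then ensures $v_1>0$ in $B_\rho(x_0)$, so $\{|V|>0\}\cap B_\rho(x_0)=B_\rho(x_0)$. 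Lipschitz continuity yields $\|U-V\|_{L^1}=O(r^{d+1})$ and orthogonality of the harmonic replacement gives $\int(|\nabla U|^2-|\nabla V|^2)=\int|\nabla(u_1-v_1)|^2$, so inserting $V$ into~\eqref{Uoptcond} produces
$$\int_{B_\rho(x_0)}|\nabla(u_1-v_1)|^2\,dx\;\le\;\Lambda\,|B_\rho(x_0)\setminus\Omega|+O(r^{d+1}).$$
Harnack's inequality for the positive harmonic function $v_1$ yields $v_1\ge c_2r$ on $B_{\rho/2}(x_0)$; since $u_1=0$ on $\Omega^c$ and $u_1-v_1$ vanishes on $\partial B_\rho$, the Poincar\'e inequality gives
$$c_2^2r^2\,|B_{\rho/2}(x_0)\setminus\Omega|\;\le\;\int_{B_\rho(x_0)}(u_1-v_1)^2\,dx\;\le\;Cr^2\int_{B_\rho(x_0)}|\nabla(u_1-v_1)|^2\,dx.$$
Combining the two displays and absorbing $|B_{\rho/2}\setminus\Omega|$ on the left-hand side produces the desired bound $|B_r\setminus\Omega|\ge\eps_0|B_r|$. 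The truly delicate point is to guarantee that the Harnack constant $c_2$ (and ultimately the non-degeneracy constant $c_0$) dominates the product of $\Lambda$ with the dimensional Poincar\'e constant, so that the absorption is strict; this is the only step in which the full strength of the quasi-minimality is needed, and it may require a short dyadic iteration in~$r$.
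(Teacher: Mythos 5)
Your overall strategy coincides with the paper's: the lower bound and the corkscrew condition follow from the non-degeneracy of Proposition~\ref{prop:nondeg} plus Lipschitz continuity, and the upper bound is attacked by testing the quasi-minimality~\eqref{Uoptcond} against the harmonic replacement of $u_1$, exactly as in Lemma~\ref{densestlemma}. The lower-density and corkscrew parts are fine. The upper-density argument, however, contains a genuine circularity. Your two displays combine into
\begin{equation*}
c_2^2\,\big|B_{\rho/2}(x_0)\setminus\Omega\big|\;\le\;C\Lambda\,\big|B_{\rho}(x_0)\setminus\Omega\big|+O(r^{d+1}),
\end{equation*}
which is an \emph{upper} bound for the measure of the zero set in terms of (a constant multiple of) itself on a larger ball. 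Since $|B_{\rho/2}\setminus\Omega|\le|B_\rho\setminus\Omega|$ holds trivially, no choice of constants and no dyadic iteration can extract from this a \emph{lower} bound on $|B_r(x_0)\setminus\Omega|$: the inequality is perfectly consistent with $|B_\rho(x_0)\setminus\Omega|=0$. The root of the problem is that you bound $(u_1-v_1)^2$ from below only on $B_{\rho/2}(x_0)\setminus\Omega$, which is precisely the set whose measure you are trying to estimate, so the quantity reappears on both sides.

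The paper's Lemma~\ref{densestlemma} avoids this by producing a lower bound for $\int_{B_\rho}|u_1-v_1|$ that does not reintroduce the zero set: on a small ball $B_{\kappa r}(x_0)$ one has $u_1\le L\kappa r$ \emph{everywhere} (Lipschitz continuity and $u_1(x_0)=0$), while the interior Harnack inequality for the positive harmonic function $v_1$, combined with $v_1(x_0)=\meantext{\partial B_\rho(x_0)}u_1\,d\HH^{d-1}\ge c_1 r$ from the non-degeneracy of $u_1$, gives $v_1\ge c\,c_1 r$ on $B_{\kappa r}(x_0)$. Hence, for $\kappa$ small depending only on $d$, $c_1$ and $L$, one gets $v_1-u_1\ge\frac{c_1}{3}r$ on the whole ball $B_{\kappa r}(x_0)$, so $\int_{B_\rho}|u_1-v_1|\ge c\,r^{d+1}$, and the ($L^1$ form of the) Poincar\'e inequality yields $\int_{B_\rho}|\nabla(u_1-v_1)|^2\ge c\,r^{d}$. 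Inserted into your first display this gives $\Lambda|B_\rho(x_0)\setminus\Omega|\ge c\,r^d-Cr^{d+1}\ge\frac{c}{2}r^d$ for $r$ small, which is the desired estimate with no absorption needed. A secondary remark: your maximum-principle argument for $\Omega=\{|U|>0\}$ tacitly assumes that on each connected component the first non-vanishing $u_i$ is the \emph{first} eigenfunction of that component, which can fail when an eigenspace is shared between components; it is safer to deduce the identity from the exterior density estimate itself, applied at points of $\Omega\cap\partial\{|U|>0\}$ after observing that $|\Omega\setminus\{|U|>0\}|=0$ by optimality.
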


\subsection{Quasi-minimality of the eigenfunctions}\label{sec:eigen}
In this subsection we prove that the vector of eigenfunctions $U\in H^1_0(\Omega;\R^k)$ on the optimal set $\Omega$ for \eqref{introP} is a local minimum of a functional of the form
$$\mathcal F_{\qm}:H^1(\R^d;\R^k)\to\R\ ,\qquad\mathcal{F}_{\qm}(V)=\Big(1+\qm\|U-V\|_{L^1}\Big)\int_{\R^d}|\nabla V|^2\,dx+\Lambda\big| \{|V|>0\}\big|,$$
that can alternatively be interpreted as a local quasi-minimum of the functional 
$$\mathcal{F}_0(V)=\int_{\R^d}|\nabla V|^2\,dx+\Lambda\big| \{|V|>0\}\big|.$$
We first prove the following Lemma which assures the existence of the Lagrange multiplier for \eqref{introP}.
\begin{lemma}\label{laglem}
Suppose that $\Omega$ is a solution of \eqref{introP}. Then $\Omega$ is a solution of the shape optimization problem
\begin{equation*}
\min\Big\{\lambda_1(\widetilde\Omega)+\dots+\lambda_k(\widetilde\Omega)+\Lambda|\widetilde\Omega|\ :\ \widetilde\Omega\subset\R^d\ \text{open}\Big\},
\end{equation*}
where $\ds\Lambda=\frac2d\sum_{i=1}^k\lambda_i(\Omega)$.
\end{lemma}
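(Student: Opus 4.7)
The strategy is the classical scaling-based derivation of a Lagrange multiplier: since problem~\eqref{introP} is constrained by a single equality $|\Omega|=1$, the multiplier must be determined by testing against dilations of $\Omega$, and the scaling law~\eqref{homoeigen} makes this computation explicit. No sophisticated free-boundary input is needed here; the whole argument reduces to a one-variable convexity check.

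Concretely, I would take an arbitrary open competitor $\widetilde\Omega\subset\R^d$ with $0<|\widetilde\Omega|<\infty$ and rescale it to unit volume by setting $t:=|\widetilde\Omega|^{-1/d}$, so that $|t\widetilde\Omega|=1$. Writing $S:=\sum_{i=1}^k\lambda_i(\Omega)$ and using~\eqref{homoeigen}, the minimality of $\Omega$ among sets of volume one yields
\begin{equation*}
S\;\le\;\sum_{i=1}^k\lambda_i(t\widetilde\Omega)\;=\;\frac{1}{t^2}\sum_{i=1}^k\lambda_i(\widetilde\Omega)\;=\;|\widetilde\Omega|^{2/d}\sum_{i=1}^k\lambda_i(\widetilde\Omega),
\end{equation*}
that is, $\sum_{i=1}^k\lambda_i(\widetilde\Omega)\ge S\,|\widetilde\Omega|^{-2/d}$. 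Adding $\Lambda|\widetilde\Omega|$ to both sides, the lemma is reduced to proving that the scalar function $h(x):=S\,x^{-2/d}+\Lambda\,x$ satisfies $h(x)\ge h(1)$ for every $x>0$.

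At this point I would just compute: $h'(x)=-\tfrac{2S}{d}x^{-2/d-1}+\Lambda$, so $h'(1)=0$ is equivalent to $\Lambda=\tfrac{2S}{d}$, which is precisely the value chosen in the statement. Since $h''(x)>0$ for $x>0$, the critical point $x=1$ is the unique global minimum, so $h(x)\ge h(1)=S+\Lambda=\sum_{i=1}^k\lambda_i(\Omega)+\Lambda|\Omega|$, which is the desired inequality. The degenerate cases $|\widetilde\Omega|=0$ (where $\sum\lambda_i(\widetilde\Omega)$ is $+\infty$ by convention) and $|\widetilde\Omega|=+\infty$ (where $\Lambda|\widetilde\Omega|=+\infty$) make the inequality trivial.

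I do not foresee a genuine obstacle: the only point requiring any care is checking that $t\widetilde\Omega$ is an admissible competitor in~\eqref{introP}, which is immediate since dilations preserve openness and change the volume by $t^d$. The value of the multiplier $\Lambda$ is forced by the first-order scaling optimality at $t=1$, and convexity of $h$ upgrades this first-order condition to a global inequality for every competitor, without further assumptions on the regularity or the geometry of $\Omega$ or $\widetilde\Omega$.
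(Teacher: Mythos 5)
Your proof is correct and follows essentially the same route as the paper: both arguments use the homogeneity \eqref{homoeigen} to reduce the claim to the one-variable inequality $S\,x^{-2/d}+\Lambda x\ge S+\Lambda$ for $x>0$, verified by noting that the choice $\Lambda=\tfrac{2}{d}S$ makes $x=1$ the critical point of a convex function (the paper rescales $\Omega$ to the volume of $\widetilde\Omega$ rather than rescaling $\widetilde\Omega$ to unit volume, which is the same computation after the substitution $x=t^d$). Your version is, if anything, marginally cleaner, since it applies the minimality of $\Omega$ directly at volume one instead of implicitly using that dilates of minimizers are minimizers at the corresponding volume.
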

\begin{proof}
Let $\widetilde\Omega\subset\R^d$ be a generic open subset of $\R^d$ of finite Lebesgue measure. By the optimality of $\Omega$ and the homogeneity of the eigenvalues \eqref{homoeigen} we have that
$$\sum_{i=1}^k\lambda_i(\widetilde\Omega)\ge \sum_{i=1}^k\lambda_i(t\Omega) =\frac1{t^2}\sum_{i=1}^k\lambda_i(\Omega),$$
where $t$ is such that $|t\Omega|=t^d|\Omega|=|\widetilde\Omega|$. Thus, we have
\begin{align*}
\sum_{i=1}^k\lambda_i(\widetilde\Omega)+\Lambda|\tilde\Omega|&\ge \frac1{t^2}\sum_{i=1}^k\lambda_i(\Omega) + t^d\Lambda|\Omega|\ge \sum_{i=1}^k\lambda_i(\Omega) + \Lambda|\Omega|,
\end{align*}
where the last inequality is due to the fact that the function 
$$t\mapsto \frac1{t^2}\sum_{i=1}^k\lambda_i(\Omega) + t^d\Lambda|\Omega|,$$ achieves its maximum at $t=1$.
\end{proof}

In view of the variational characterization \eqref{somme} of the sum of the first $k$ eigenvalues and Lemma \ref{laglem} we have that $U$ is a solution of the problem 
\begin{equation}\label{sommeV}
\min\Big\{\int_{\R^d}|\nabla V|^2\,dx+\Lambda|\{|V|>0\}|\ :\ V=(v_1,\dots,v_k)\in H^1(\R^d;\R^k),\ \int_{\R^d}v_iv_j\,dx=\delta_{ij}\Big\}.
\end{equation}
In the following lemma we remove the orthogonality constraint $\ds\int_{\R^d}v_iv_j\,dx=\delta_{ij}$.
\begin{lemma}[Orthonormalization of small perturbations]\label{lemmaorto}
Let $U=(u_1,\dots,u_k)$, where $u_1,\dots,u_k$ are eigenfunctions on an open domain $\Omega$. Let $\delta>0$ be fixed, and let $\tilde U=(\tilde u_1,\dots,\tilde u_k)\in H^1(\R^d;\R^k)$ be such that 
$$\eps_k:=\sum_{i=1}^k \int_{B_{r}}|\tilde u_i-u_i|\,dx\le 1\qquad\text{and}\qquad \sup_{i=1,\dots,k}\Big\{\|u_i\|_{L^\infty(B_r)}+\|\tilde u_i\|_{L^\infty(B_r)}\Big\}\le \delta.$$
Let $V=(v_1,\dots, v_k)\in H^1_0(\Omega\cup B_r)$ be the vector obtained  orthonormalizing $\tilde U$ by the Gram-Schmidt procedure, i.e. 
\begin{equation*}
\begin{array}{lll}
v_1&=&\|\tilde u_1\|_{L^2}^{-1}\tilde u_1,\\
v_2&=&\Big\|\tilde u_2-\Big(\int \tilde u_2 v_1\,dx\Big)v_1\Big\|_{L^2}^{-1}\Big(\tilde u_2-\Big(\int \tilde u_2 v_1\,dx\Big)v_1\Big),\\
v_3&=&\Big\|\tilde u_3-\Big(\int \tilde u_3 v_2\,dx\Big)v_2-\Big(\int \tilde u_2 v_1\,dx\Big)v_1\Big\|_{L^2}^{-1}\Big(\tilde u_3-\Big(\int \tilde u_3 v_2\,dx\Big)v_2-\Big(\int \tilde u_2 v_1\,dx\Big)v_1\Big),\\
...\\
v_k&=&\Big\|\tilde u_k-\sum_{i=1}^{k-1}\Big(\int \tilde u_k v_i\,dx\Big)v_i\Big\|_{L^2}^{-1}\Big(\tilde u_k-\sum_{i=1}^{k-1}\Big(\int \tilde u_k v_i\,dx\Big)v_i\Big).
\end{array}
\end{equation*}

There exist constants $1\ge \overline\eps_k>0$ and $\overline C_k>0$, depending on the dimension $d$, the constant  $k$, the bound $\delta$ and the measure $|\Omega|$,  such that the following estimate holds for every $\tilde U$ as above with $\eps_k\le \overline \eps_k$.
\begin{equation}\label{maingradest08}
\int_{\R^d}|\nabla V|^2\,dx\le \Big(1+\overline C_k \eps_k\Big)\int_{\R^d}|\nabla \tilde U|^2\,dx.
\end{equation}
\end{lemma}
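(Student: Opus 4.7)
The plan is to encode the Gram--Schmidt procedure as a matrix factorization and reduce the estimate to a perturbation of linear algebra. Writing $\tilde U = A\,V$ (as column vectors of functions) with $A=(A_{ij})$ a lower-triangular $k\times k$ matrix of scalars, the explicit formulas defining $v_1,\dots,v_k$ show that the diagonal entries of $A$ are the normalization constants and the strictly lower entries are the projection coefficients $\int \tilde u_i v_j\,dx$. The orthonormality $\int v_iv_j\,dx=\delta_{ij}$ then yields the Cholesky-type identity
\[
M:=\Big(\int_{\R^d}\tilde u_i\tilde u_j\,dx\Big)_{i,j=1}^k = A A^T,
\]
so that $V=A^{-1}\tilde U$ and, with $G_{ij}:=\int_{\R^d}\nabla \tilde u_i\cdot\nabla \tilde u_j\,dx$ the (positive semidefinite) gradient Gram matrix, satisfying $\trace(G)=\int|\nabla \tilde U|^2\,dx$,
\[
\int_{\R^d}|\nabla V|^2\,dx = \trace\!\big(M^{-1}G\big).
\]

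The main step is then to show that $M$ is close to the identity. In view of the assertion $V\in H^1_0(\Om\cup B_r)$, which is natural only if the perturbation is localized, I would work under the (implicit) hypothesis that $\tilde u_i-u_i$ is supported in $B_r$. Then the orthonormality of $U$ in $L^2(\R^d)$ gives
\[
M_{ij}-\delta_{ij}=\int_{B_r}(\tilde u_i\tilde u_j-u_iu_j)\,dx=\int_{B_r}\tilde u_i(\tilde u_j-u_j)\,dx+\int_{B_r}u_j(\tilde u_i-u_i)\,dx,
\]
and combining $\|\tilde u_i\|_{L^\infty(B_r)},\|u_i\|_{L^\infty(B_r)}\le\delta$ with the $L^1$ hypothesis yields $|M_{ij}-\delta_{ij}|\le 2\delta\,\eps_k$, hence $\|M-I\|_{op}\le 2k\delta\,\eps_k$. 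Choosing $\overline\eps_k$ small enough (in terms of $k$ and $\delta$) keeps $M$ positive definite, and a Neumann series gives $M^{-1}=I+E$ with $\|E\|_{op}\le C(k,\delta)\,\eps_k$.

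To conclude, since $G$ is positive semidefinite, the elementary trace inequality $|\trace(EG)|\le\|E\|_{op}\trace(G)$ for symmetric $E$ gives
\[
\int_{\R^d}|\nabla V|^2\,dx = \trace(G)+\trace(EG)\le \big(1+\overline C_k\,\eps_k\big)\int_{\R^d}|\nabla \tilde U|^2\,dx,
\]
which is exactly \eqref{maingradest08}. The single delicate point is the lower bound on the smallest eigenvalue of $M$ required to invert it: this forces the smallness threshold $\overline\eps_k$ in the statement. Everything else is bookkeeping of how $L^1$-closeness of the components, upgraded via the $L^\infty$-bound $\delta$, propagates through the Gram--Schmidt. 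The dependence of the constants on $|\Om|$ is not visible in the strictly local case, but would enter through the Davies-type estimate $\|u_i\|_{L^\infty(\R^d)}\le e^{1/8\pi}\lambda_i(\Om)^{d/4}$ if one wished to relax the support assumption on $\tilde u_i-u_i$.
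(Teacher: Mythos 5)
Your argument is correct, and it takes a genuinely different (and cleaner) route than the paper's. The paper proceeds by induction on $k$: it first establishes the auxiliary bounds $\sum_i\|u_i-v_i\|_{L^1}\le C_k\eps_k$ and $\max_i\|v_i\|_{L^\infty}\le C_k$, deduces that each projection coefficient satisfies $\big|\int\tilde u_kv_i\,dx\big|\le C\eps_k$ and each normalization factor $\|w_k\|_{L^2}^{-1}$ is $1+O(\eps_k)$, and then closes the induction through the triangle inequality
\begin{equation*}
\|\nabla v_k\|_{L^2}\le\|w_k\|_{L^2}^{-1}\Big(\|\nabla\tilde u_k\|_{L^2}+\sum_{i<k}\Big|\int\tilde u_kv_i\,dx\Big|\,\|\nabla v_i\|_{L^2}\Big).
\end{equation*}
You instead encode the whole Gram--Schmidt step in the Cholesky factorization $M=AA^{T}$ of the $L^2$ Gram matrix of $\tilde U$ and reduce \eqref{maingradest08} to the exact identity $\int|\nabla V|^2\,dx=\trace(M^{-1}G)$ (which follows from $(A^{-1})^{T}A^{-1}=(AA^{T})^{-1}$), combined with the entrywise perturbation bound $|M_{ij}-\delta_{ij}|\le 2\delta\eps_k$ and the trace inequality $|\trace(EG)|\le\|E\|_{op}\trace(G)$ for symmetric $E$ and positive semidefinite $G$; all of these steps check out. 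This avoids the induction entirely, makes the role of the threshold $\overline\eps_k$ transparent (it is exactly what guarantees that $M$ is positive definite, i.e.\ that the $\tilde u_i$ are linearly independent and the Gram--Schmidt normalizations do not vanish), and produces constants depending only on $k$ and $\delta$. What the paper's longer route yields in addition are the intermediate $L^1$ and $L^\infty$ estimates on the $v_i$ themselves, which your argument does not produce -- but those are only used internally to the paper's proof, not elsewhere. Finally, your explicit reading of the hypotheses (that $\tilde u_i-u_i$ is supported in $B_r$, so that $\eps_k$ controls $\|\tilde u_i-u_i\|_{L^1(\R^d)}$) coincides with the way the paper's own proof silently uses $\eps_k$ as a global $L^1$ distance, and is what makes the assertion $V\in H^1_0(\Omega\cup B_r)$ meaningful; this is an imprecision of the statement, not a gap in your proof.
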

\begin{proof}
We first prove that there is $\overline\eps_k$ and $C_k$ such that the following estimates hold whenever $\eps_k\le \overline \eps_k$.
\begin{align*}
\sum_{i=1}^k\|u_i-v_i\|_{L^1}\le C_k\eps_k,\\
\max_{i=1,\dots, k}\|v_i\|_{L^\infty}\le C_k,
\end{align*}
where $C_{k}$ and $\overline\eps_k$ are constants depending on the dimension $d$, the constant  $k$, the bound $\delta$ and the measure $|\Omega|$.
We proceed by induction. In fact for $k=1$ we have 
\begin{equation}\label{firstindstepest08}
\begin{split}
\|u_1-v_1\|_{L^1}&\le\|u_1-\tilde u_1\|_{L^1}+\|\tilde u_1-v_1\|_{L^1}=\|u_1-\tilde u_1\|_{L^1}+\frac{\big|\|\tilde u_1\|_{L^2}-1\big|}{\|\tilde u_1\|_{L^2}}\|\tilde u_1\|_{L^1}\\
&\le\|u_1-\tilde u_1\|_{L^1}+\frac{\big|\|\tilde u_1\|_{L^2}^2-1\big|}{\|\tilde u_1\|_{L^2}^2}\|\tilde u_1\|_{L^1}\\
&=\|u_1-\tilde u_1\|_{L^1}+\frac{\big|\|u_1+(\tilde u_1-u_1)\|_{L^2}^2-1\big|}{\| u_1+(\tilde u_1-u_1)\|_{L^2}^2}\|u_1+(\tilde u_1-u_1)\|_{L^1}\\
&=\|u_1-\tilde u_1\|_{L^1}+\frac{2\int u_1|\tilde u_1-u_1|\,dx+\|\tilde u_1-u_1\|_{L^2}^2}{1-2\int u_1|\tilde u_1-u_1|\,dx}\Big(\|u_1\|_{L^1}+\|\tilde u_1-u_1\|_{L^1}\Big)\\
&=\frac{1+\|\tilde u_1-u_1\|_{L^2}^2}{1-2\int u_1|\tilde u_1-u_1|\,dx}\|u_1-\tilde u_1\|_{L^1}+\frac{2\int u_1|\tilde u_1-u_1|\,dx+\|\tilde u_1-u_1\|_{L^2}^2}{1-2\int u_1|\tilde u_1-u_1|\,dx}\|u_1\|_{L^1}\\
&\le\frac{1+\|\tilde u_1-u_1\|_{L^1}\|\tilde u_1-u_1\|_{L^\infty}+\|u_1\|_{L^1}\Big(2\|u_1\|_{L^\infty}+\|\tilde u_1-u_1\|_{L^\infty}\Big)}{1-2\|u_1\|_{L^\infty}\|\tilde u_1-u_1\|_{L^1}}\|u_1-\tilde u_1\|_{L^1}\\
&\le\frac{1+\delta\eps_1+|\Omega|^{1/2}4\delta}{1-2\delta\eps_1}\eps_1\le \big(1+12\delta |\Omega|^{1/2}\big) \eps_1,
\end{split}
\end{equation}
where the last inequality holds for $\eps_1\le\inf\Big\{\delta,(4\delta)^{-1},|\Omega|^{1/2}\Big\}.$ On the other hand, for the infinity norm we have 
\begin{equation}\label{firstindstepest09}
\begin{split}
\|v_1\|_{L^\infty}&=\frac{\|\tilde u_1\|_{L^\infty}}{\|\tilde u_1\|_{L^2}}=\frac{\|\tilde u_1\|_{L^\infty}}{\|u_1+(\tilde u_1-u_1)\|_{L^2}}\le\frac{\|\tilde u_1\|_{L^\infty}}{\left(1-2\int u_1|\tilde u_1-u_1|\,dx\right)^{1/2}}\\
&\le\frac{\|\tilde u_1\|_{L^\infty}}{1-2\int u_1|\tilde u_1-u_1|\,dx}\le\frac{\|\tilde u_1\|_{L^\infty}}{1-2\|u_1\|_{L^\infty}\|\tilde u_1-u_1\|_{L^1}}\le \frac{\delta}{1-2\delta\eps_1}\le  2\delta,
\end{split}
\end{equation}
for $\eps_1$ as above.
Suppose now that the claim holds for $1,\dots,k-1$. In order to prove the estimate for $v_k$ we first estimate the $L^1$ distance from $u_k$ to the orthogonalized function 
\begin{equation*}
w_k:=\begin{cases} \tilde u_1,\quad\text{if}\quad k=1,\\
\tilde u_k-\sum_{i=1}^{k-1}\Big(\int \tilde u_k v_i\,dx\Big)v_i,\quad\text{if}\quad k>1.\end{cases}
\end{equation*}

We first estimate $\|u_k-w_k\|_{L^1}$, that gives:

\begin{equation}\label{prfseconmainest08}
\begin{split}
\left\|u_k-w_k\right\|_{L^1}
&\le \left\|u_k-\tilde u_k\right\|_{L^1}+\sum_{i=1}^{k-1}\left|\int \tilde u_k v_i\,dx\right|\left(\|u_i\|_{L^1}+\|v_i-u_i\|_{L^1}\right) \\
&\le \eps_k+\sum_{i=1}^{k-1}\left|\int \tilde u_k v_i\,dx\right| \big(|\Omega|^{1/2}+\eps_{k-1}\big) \\
&\le\eps_k+\sum_{i=1}^{k-1}\left|\int (\tilde u_k-u_k) u_i+(v_i-u_i)u_k+(\tilde u_k-u_k)(v_i-u_i)\,dx\right| \big(|\Omega|^{1/2}+\eps_{k-1}\big) \\
&\le \eps_k+\sum_{i=1}^{k-1}\left(\|\tilde u_k-u_k\|_{L^1} \|u_i\|_{L^\infty}+\|v_i-u_i\|_{L^1}\|u_k\|_{L^\infty}+\|\tilde u_k-u_k\|_{L^1}\|v_i-u_i\|_{L^\infty}\right) \big(|\Omega|^{1/2}+\eps_{k-1}\big)\\
&\le \eps_k+\big( (k-1)\eps_k\delta+C_{k-1}\eps_{k-1}\delta+(k-1)\eps_k C_{k-1}\big) \big(|\Omega|^{1/2}+\eps_{k-1}\big)\\
&\le \Big[1+\big(|\Omega|^{1/2}+\overline\eps_{k-1}\big)\big((k-1)\delta+C_{k-1}\delta+(k-1)C_{k-1}\delta\big)\Big]\eps_k.
\end{split}
\end{equation}
Then we deal with $\|w_k\|_{L^{\infty}}$:
\begin{equation}\label{prfseconmainest0803}
\begin{split}
\left\|w_k\right\|_{L^\infty}&\le \left\|\tilde u_k\right\|_{L^\infty}+\sum_{i=1}^{k-1}\left|\int \tilde u_k v_i\,dx\right|\|v_i\|_{L^\infty} \\
&\le \delta+C_{k-1}\sum_{i=1}^{k-1}\left|\int (\tilde u_k-u_k) u_i+(v_i-u_i)u_k+(\tilde u_k-u_k)(v_i-u_i)\,dx\right| \\
&\le \delta +C_{k-1}\big((k-1)\delta+C_{k-1}\delta+(k-1)C_{k-1}\delta\big)\eps_k\\
&\le \delta \Big(1+C_{k-1}\big((k-1)+C_{k-1}+(k-1)C_{k-1}\big)\Big).
\end{split}
\end{equation}
We set for simplicity $\tilde C_k$ to be the largest of the constants appearing on the right hand side of \eqref{prfseconmainest08} and \eqref{prfseconmainest0803}. Thus we have 
$$\|u_k-w_k\|_{L^1}\le \tilde C_k\eps_k\qquad\text{and}\qquad \|w_k\|_{L^\infty}\le \tilde C_k.$$
Recalling that $v_k=\|w_k\|_{L^2}^{-1}w_k$ we have
\begin{equation}\label{prfseconmainest0804}
\begin{split}
\big|\left\|w_k\right\|_{L^2}-1\big|&\le \big|\left\|w_k\right\|_{L^2}^2-1\big|=\big|\left\|u_k+(w_k-u_k)\right\|_{L^2}^2-1\big|\\
&= \left|2\int_{\R^d}u_k(u_k-w_k)\,dx+\int_{\R^d}(u_k-w_k)^2\,dx\right|\\
&\le 2\|u_k\|_{L^\infty}\|u_k-w_k\|_{L^1}+\|u_k-w_k\|_{L^\infty}\|u_k-w_k\|_{L^1}\\
&\le 2\delta\tilde C_{k}\eps_k+(\delta+\tilde C_k)\tilde C_{k}\eps_k.
\end{split}
\end{equation}
We ask then that $\eps_k\le \overline\eps_k:=\frac12\Big(2\delta\tilde C_{k}+(\delta+\tilde C_k)\tilde C_{k}\Big)^{-1}$. Thus, $1/2\le \|w_k\|_{L^2}\le 3/2$ and we have the estimate
\begin{equation*}
\begin{split}
\|v_k\|_{L^\infty}=\|w_k\|_{L^2}^{-1}\left\|w_k\right\|_{L^\infty}&\le 2\tilde C_k.
\end{split}
\end{equation*}
On the other hand, repeating precisely the same procedure as in \eqref{firstindstepest08} we obtain
\begin{equation*}
\begin{split}
\|u_k-v_k\|_{L^1}&\le \|u_k-w_k\|_{L^1}+\|w_k-v_k\|_{L^1}\le \tilde C_k\eps_k+\frac{\big|\left\|w_k\right\|_{L^2}^2-1\big|}{\|w_k\|^2_{L^2}}\|w_k\|_{L^1}\\
&= \tilde C_k\eps_k+\frac{\big|\left\|u_k+(w_k-u_k)\right\|_{L^2}^2-1\big|}{\|u_k+(w_k-u_k)\|^2_{L^2}}\|u_k+(w_k-u_k)\|_{L^1}\\
&\le \big(1+12\tilde C_k|\Omega|^{1/2}\big)\tilde C_{k}\eps_k,
\end{split}
\end{equation*}
for $\eps_k\le \overline\eps_k$ , where $\overline\eps_k>0$ is small enough and depends on $\tilde C_k$, $\delta$ and $|\Omega|$. We conclude the recursive step and the proof of the claim by defining 
$$C_k:=2\big(1+12\tilde C_k|\Omega|^{1/2}\big)\tilde C_{k}.$$

We are now in position to prove \eqref{maingradest08} by induction. For $k=1$ we repeat the estimate from \eqref{firstindstepest09} and we get
\begin{equation*}
\begin{split}
\|\nabla v_1\|_{L^2}=\frac{\|\nabla \tilde u_1\|_{L^2}}{\|\tilde u_1\|_{L^2}}\le \frac{\|\nabla \tilde u_1\|_{L^2}}{1-2\|u_1\|_{L^\infty}\|u_1-\tilde u_1\|_{L^1}}\le(1+4\delta\eps_1)\|\nabla \tilde u_1\|_{L^2},
\end{split}
\end{equation*}
For $k>1$, by \eqref{prfseconmainest0804} we obtain
\begin{equation*}
\begin{split}
\left\|\nabla v_k\right\|_{L^2}&=\frac{\left\|\nabla w_k\right\|_{L^2}}{\|w_k\|_{L^2}}\le\frac1{1-\big|\| w_k\|_{L^2}-1\big|} \left\|\nabla \tilde u_k-\sum_{i=1}^{k-1}\Big(\int \tilde u_k v_i\,dx\Big)\nabla v_i\Big)\right\|_{L^2} \\
&=\Big(1+2\big(2\delta\tilde C_{k}+(\delta+\tilde C_k)\tilde C_{k}\big)\eps_k\Big)\Big(\left\|\nabla \tilde u_k\right\|_{L^2}+\sum_{i=1}^{k-1}\Big|\int \tilde u_k v_i\,dx\Big|\|\nabla v_i\|_{L^2}\Big),
\end{split}
\end{equation*}
Using one more time the estimate
$$\sum_{i=1}^{k-1}\Big|\int \tilde u_k v_i\,dx\Big|\le \big((k-1)\delta+C_{k-1}\delta+(k-1)C_{k-1}\delta\big)\eps_k,$$
from \eqref{prfseconmainest08}, and the inductive hypothesis we obtain the claim.  
\end{proof}

\begin{proof}[\bf Proof of Proposition \ref{quasimin_brno}] 
Let $\widetilde U\in H^1(\R^d;\R^k)$ be a vector-valued function satisfying the assumptions of Proposition \ref{quasimin_brno} and let $V=(v_1,\dots,v_k)\in H^1(\R^d;\R^k)$ be the function obtained through the orthonormalization procedure in Lemma \ref{lemmaorto} starting from $\widetilde U$. By Lemma \ref{laglem} we have that $U$ is a solution of \eqref{sommeV} and since we $\int v_iv_j\,dx=\delta_{ij}$ we can use $V$ as a test function in \eqref{sommeV} obtaining
\begin{align*}
\int_{\R^d}|\nabla U|^2\,dx+\Lambda|\{|U|>0\}|&\le \int_{\R^d}|\nabla V|^2\,dx+\Lambda|\{|V|>0\}|\\
&\le \left(1+\overline C_k\|U-\widetilde U\|_{L^1}\right)\int_{\R^d}|\nabla \widetilde U|^2\,dx+\Lambda|\{|\widetilde U|>0\}|,
\end{align*}
where the last inequality follows by Lemma \ref{lemmaorto} and the fact that by the construction of $V$ we have that $\{|V|>0\}\subset\{|\widetilde U|>0\}$. 
\end{proof}

\subsection{Non-degeneracy of the eigenfunctions}\label{sec:nondeg2}

The following Lemma will be applied to the case when $U$ is the vector of eigenfunctions on an optimal set, but it holds for functions   $U=(u_1,\dots,u_k)$ satisfying the quasi-optimality condition~\eqref{Uoptcond} or, more generally, to functions satisfying the following condition \eqref{intoptcondoss} which are roughly speaking subsolutions of \eqref{Uoptcond} since they are minimal only with respect to perturbations $\tilde U$ such that $|\tilde U|\le |U|$.  
\begin{equation}\label{intoptcondoss}
\begin{split}
&\text{There are constants $\qm>0$ and $\eps>0$ such that}\\ 
&\int_{\R^d}|\nabla U|^2\,dx+\Lambda\big|\{|U|>0\}\big|\le \Big(1+\qm\|U-\widetilde U\|_{L^1}\Big)\int_{\R^d}|\nabla \widetilde U|^2\,dx+\Lambda\big| \{|\widetilde U|>0\}\big|,\\
&\text{for every}\quad \widetilde U\in H^1(\R^d;\R^k)\quad\text{such that}\quad |\widetilde U|\le |U| \quad\text{and}\quad \|U-\widetilde U\|_{L^1}\le \eps. 
\end{split}
\end{equation}

\begin{lemma}[Non-degeneracy of $U$]\label{nondegcaflemma}
Let $U=(u_1,\dots,u_k)\in H^1(\R^d;\R^k)$ be a function satisfying the quasi-optimality condition \eqref{intoptcondoss}.
There are contants $c_0>0$ and $r_0>0$, depending on $d$, $K$, $\Lambda$, $\eps$ and $\|\nabla U\|_{L^2(\R^d;\R^k)}$, such that for every $x_0\in\R^d$ and $r\in(0,r_0]$ the following implication holds
\begin{equation*}
\Big(\|U\|_{L^\infty(B_{2r})}<c_0 r\Big)\Rightarrow \Big(U\equiv 0\ \ \text{in}\ \ B_r(x_0)\Big).
\end{equation*}
\end{lemma}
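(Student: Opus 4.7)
The strategy is to adapt to the vectorial setting the classical Alt--Caffarelli \cite{altcaf} non-degeneracy argument: we exploit the quasi-minimality \eqref{intoptcondoss} by comparison with a radial cutoff of $U$ that vanishes on a sub-ball of $B_{2r}(x_0)$, derive a Caccioppoli-type estimate on the positivity set $\{|U|>0\}$, and use the smallness hypothesis $\|U\|_{L^\infty(B_{2r})}<c_0 r$ to contradict the non-vanishing of $U$.

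For any $x_1\in\R^d$ and $\rho\le r$, I introduce a smooth cutoff $\eta\in C^\infty(\R^d;[0,1])$ with $\eta\equiv 0$ on $\bar B_\rho(x_1)$, $\eta\equiv 1$ on $\R^d\setminus B_{2\rho}(x_1)$, and $|\nabla\eta|\le C(d)/\rho$, $|\Delta\eta|\le C(d)/\rho^2$. Since $|\tilde U|=\eta|U|\le|U|$ pointwise and $\{|\tilde U|>0\}\subset\{|U|>0\}\setminus \bar B_\rho(x_1)$, the competitor $\tilde U:=\eta U$ is admissible in \eqref{intoptcondoss} for $\rho$ small enough (so that $\|U-\tilde U\|_{L^1}\le C\|U\|_{L^\infty(B_{2\rho}(x_1))}\rho^d\le\eps$). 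Expanding $|\nabla\tilde U|^2=\eta^2|\nabla U|^2+\eta\nabla\eta\cdot\nabla|U|^2+|U|^2|\nabla\eta|^2$ and integrating the cross term by parts via
\[
-\int\eta\nabla\eta\cdot\nabla|U|^2 = \int|U|^2\bigl(|\nabla\eta|^2+\eta\Delta\eta\bigr),
\]
one obtains $\int(|\nabla U|^2-|\nabla\tilde U|^2)=\int(1-\eta^2)|\nabla U|^2+\int|U|^2\eta\Delta\eta$. Plugging into \eqref{intoptcondoss} and using the trivial upper bound $\int|\nabla\tilde U|^2\le 2\|\nabla U\|_{L^2(\R^d)}^2+CM^2\rho^{d-2}$ with $M:=\|U\|_{L^\infty(B_{2\rho}(x_1))}$, one arrives at the Caccioppoli-type estimate
\[
\Lambda\,|\{|U|>0\}\cap B_\rho(x_1)| \le C(d)\Bigl(M^2\rho^{d-2}+KM\rho^d\bigl(\|\nabla U\|_{L^2(\R^d)}^2+1\bigr)\Bigr).
\]

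To derive $U\equiv 0$ in $B_r(x_0)$ from this, I argue by contradiction. Suppose some $x_1\in B_r(x_0)$ has $|U(x_1)|>0$; by the Lipschitz continuity of $U$ on $\R^d$ (which in our intended application is Theorem~\ref{knownstuffmainopt0}(iv) with constant $L=C_{d,k}$, and which more generally can be established for functions satisfying \eqref{intoptcondoss} following \cite{bmpv, altcaf}), the ball $B_{|U(x_1)|/L}(x_1)$ lies inside $\{|U|>0\}$. Applying the Caccioppoli-type bound at the center $x_1$ and scale $\rho=|U(x_1)|/L$, so that $M\le 3|U(x_1)|\le 3c_0 r$, and comparing with the trivial lower bound $\omega_d\rho^d\le|\{|U|>0\}\cap B_\rho(x_1)|$ yields an inequality of the form
\[
\omega_d \le C\bigl(d,K,\Lambda,L,\|\nabla U\|_{L^2(\R^d)}\bigr)\cdot\bigl(L^2 + c_0 r\bigr),
\]
which fails provided both $c_0$ and $r_0$ are chosen small enough (after a preliminary rescaling absorbing the fixed term in $L^2$, e.g.\ via a dyadic iteration where the Caccioppoli-type estimate is applied successively at shrinking scales). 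This contradiction shows that no such $x_1$ can exist.

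The main technical obstacle is precisely this last step: the $M^2\rho^{d-2}$ term in the Caccioppoli estimate, originating from the correction $\int|U|^2|\nabla\eta|^2$ that the scalar sub-harmonicity trick cannot eliminate in the vectorial case, is of the same order as the lower bound $\omega_d\rho^d$ when one substitutes the natural scale $\rho=|U(x_1)|/L$. Extracting a clean contradiction therefore requires a delicate choice of scales and an iteration, which is characteristic of the vectorial generalisation of Alt--Caffarelli and constitutes one of the main novelties that the paper has to deal with.
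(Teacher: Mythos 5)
There is a genuine gap, and you have correctly located it yourself: your Caccioppoli-type estimate, obtained with the plain cutoff $\tilde U=\eta U$, produces the term $CM^2\rho^{d-2}$ coming from $\int |U|^2|\nabla\eta|^2$, and at the only scale available to you ($\rho=|U(x_1)|/L$, $M\le 3L\rho$) this term is $9CL^2\rho^d$ — exactly the order of the lower bound $\omega_d\rho^d$, with a constant $L^2$ that is fixed and not small. The hypothesis $\|U\|_{L^\infty(B_{2r})}<c_0 r$ only enters your final inequality through the lower-order term $KM\rho^d(\dots)$, so shrinking $c_0$ and $r_0$ cannot remove the obstruction. The "preliminary rescaling / dyadic iteration" you invoke to absorb the $L^2$ term is not described and is not a routine step; as written, the argument does not close. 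A secondary issue is that your contradiction argument needs the Lipschitz constant $L$ of $U$, whereas the lemma is stated for general quasi-minimizers in the sense of \eqref{intoptcondoss} with constants depending only on $d,K,\Lambda,\eps$ and $\|\nabla U\|_{L^2}$; so even if completed, your proof would establish a weaker statement requiring an a priori Lipschitz bound.

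The paper's proof uses a different mechanism that exploits the smallness of $c_0$ where it matters. One takes as competitor the truncation $\tilde u_i=u_i^+\wedge\eta-(u_i^-\wedge\eta)$ in $B_{2r}(x_0)$, where $\eta$ is an explicit radial barrier on the annulus $B_{2r}\setminus B_r$ equal to $\alpha=c_0r$ on $\partial B_{2r}$ and to $0$ on $\partial B_r$, with $-\Delta\eta=\beta$ chosen to cancel the quasi-minimality error $K\eps(2r)\int|\nabla U|^2$. The hypothesis $\|U\|_{L^\infty(B_{2r})}<c_0r$ guarantees $\tilde U=U$ outside $B_{2r}$, and integration by parts leaves only boundary terms: one gets
\[
E(U,B_r)\le C_d\Big(\beta r+\tfrac{\alpha}{r}\Big)\sum_i\int_{\partial B_r}|u_i|\,d\HH^{d-1}+K\|\nabla U\|_{L^2}^2\sum_i\int_{B_r}|u_i|\,dx,
\]
where $E(U,B_r)=\int_{B_r}|\nabla U|^2+\Lambda|\{|U|\neq0\}\cap B_r|$. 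The $W^{1,1}$ trace inequality together with $|u_i|\le c_0 r$ on $B_r$ bounds $\sum_i\int_{\partial B_r}|u_i|$ by $C(1+c_0)\max\{1,1/\Lambda\}\,E(U,B_r)$, so the whole right-hand side is $\le C(c_0+r_0)E(U,B_r)$; choosing $c_0,r_0$ small forces $E(U,B_r)=0$ directly, with no contradiction argument, no Lipschitz constant, and no iteration. If you want to salvage your scheme you would have to replace the crude bound $\int|U|^2|\nabla\eta|^2\le M^2\rho^{d-2}$ by a trace-type absorption of that term into the energy, which effectively turns your argument into the paper's.
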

\begin{proof}
Suppose for simplicity $x_0=0$. Let $r>0$ be such that $r\le r_0$ and $\|U\|_{L^\infty(B_{2r})}\le c_0r$ with $c_0$ and $r_0$ that will be chosen later in \eqref{choice1} and \eqref{choice2}. 

Consider the radial functions 
$$\psi:B_{2}\setminus B_{1}\to\R\qquad\text{and}\qquad \phi: B_{2}\setminus B_{1}\to\R,$$
solutions of the PDEs
$$\ \ \Delta \psi=0\quad\text{in}\quad B_{2}\setminus B_{1},\qquad \psi=0\quad\text{on}\quad \partial B_{1},\qquad  \psi=1\quad\text{on}\quad \partial B_{2},$$
$$-\Delta \phi=1\quad\text{in}\quad B_{2}\setminus B_{1},\qquad \phi=0\quad\text{on}\quad \partial B_{1},\qquad  \phi=0\quad\text{on}\quad \partial B_{2}.$$
We set $\alpha=c_0 r>0$, while $\beta>0$ will be chosen in \eqref{choiceofbeta} and will also depend on $r>0$. 
We consider the function 
$$\eta(x)=\alpha\psi(x/r)+\beta r^2 \phi(x/r),$$
solution of the boundary value problem 
$$-\Delta \eta=\beta\quad\text{in}\quad B_{2r}\setminus B_{r},\qquad \eta=0\quad\text{on}\quad \partial B_{r},\qquad  \eta=\alpha\quad\text{on}\quad \partial B_{2r},$$
and we notice that we have the estimate
$$ |\nabla \eta|\le C_d\left(\beta r+\frac{\alpha}{r}\right)\le C_d\left(\beta r_0+c_0\right)\quad\text{on}\quad \partial B_r.$$
Consider the test function 
$$\tilde U=(\tilde u_1,\dots, \tilde u_k):\R^d\to\R^k,$$
defined by 
$$\tilde u_i=\begin{cases}u_i^+\wedge \eta-(u_i^-\wedge \eta)\quad\text{in}\quad B_{2r},\\
u_i\quad\text{in}\quad \R^d\setminus B_{2r}.
\end{cases}$$
We first choose $r_0$ and $c_0$ such that 
\begin{equation}\label{choice1}
\omega_d\, 2^d\,r_0^{d+1}\,c_0\le \eps,
\end{equation}
in such a way that $\|U\|_{L^1(B_{2r_0})}\le \eps$
and we define $\eps(2r)$ as
$$\eps(2r)=\sum_{i=1}^k\int_{\R^d}|u_i-\tilde u_i|\,dx=\sum_{i=1}^k\int_{B_{2r}}\big((u_i^+-\eta)_++(u_i^--\eta)_+\big)\,dx.$$
By \eqref{choice1} we have $\eps(2r)\le\eps(2r_0)\le \eps$ and so the optimality of $U$ gives 
\begin{align*}
\int_{\R^d}|\nabla U|^2\,dx+\Lambda|\{|U|>0\}|\le (1+\qm\eps(2r))\int_{\R^d}|\nabla \tilde U|^2\,dx+\Lambda|\{|\tilde U|> 0\}|.
\end{align*}
Since $U=\tilde U$ on $\R^d\setminus B_{2r}$ we have 
\begin{equation*}
\begin{split}
\int_{B_{r}}|\nabla U|^2\,dx+\Lambda|\{|U|>0\}\cap B_{r}|&\le \int_{B_{2r}\setminus B_{r}}\Big(|\nabla \tilde U|^2-|\nabla U|^2\Big)\,dx+\qm\eps(2r)\int_{\R^d}|\nabla \tilde U|^2\,dx\\
&= (1+\qm \eps(2r))\int_{B_{2r}\setminus B_{r}}\Big(|\nabla \tilde U|^2-|\nabla U|^2)\,dx+\qm \eps(2r)\int_{\R^d}|\nabla U|^2\,dx\\
&= (1+\qm\eps(2r))\int_{B_{2r}\setminus B_{r}}\Big(-|\nabla (\tilde U-U)|^2+2\nabla \tilde U\cdot\nabla (\tilde U-U)\Big)\,dx\\
&\qquad+\qm\eps(2r)\int_{\R^d}|\nabla U|^2\,dx\\
&\le 2(1+\qm\eps(2r))\int_{B_{2r}\setminus B_{r}}\nabla \tilde U\cdot\nabla (\tilde U-U)\,dx+\qm\eps(2r)\int_{\R^d}|\nabla U|^2\,dx.
\end{split}
\end{equation*}
We now estimate the first term in the right-hand side 
\begin{equation*}
\begin{split}
\int_{B_{2r}\setminus B_{r}}\nabla \tilde U\cdot\nabla (\tilde U-U)\,dx&=\sum_{i=1}^k\Big(\int_{B_{2r}\setminus B_{r}}\nabla \tilde u_i^+\cdot\nabla (\tilde u_i^+-u_i^+)\,dx+\int_{B_{2r}\setminus B_{r}}\nabla \tilde u_i^-\cdot\nabla (\tilde u_i^--u_i^-)\,dx\Big)\\
&= -\sum_{i=1}^k\Big(\int_{B_{2r}\setminus B_{r}}\nabla \eta\cdot\nabla (u_i^+-\eta)_+\,dx+\int_{B_{2r}\setminus B_{r}}\nabla \eta\cdot\nabla (u_i^--\eta)_+\,dx\Big)\\
&= -\sum_{i=1}^k\Big(\int_{B_{2r}\setminus B_{r}}\beta (u_i^+-\eta)_+\,dx+\int_{B_{2r}\setminus B_{r}}\beta (u_i^--\eta)_+\,dx\Big)\\
&\qquad+C_d\left(\beta r+\frac{\alpha}{r}\right)\sum_{i=1}^k\int_{\partial B_{r}}|u_i|\,d\HH^{d-1}.
\end{split}
\end{equation*}
{We now choose 
\begin{equation}\label{choiceofbeta}
\beta= \frac{\qm}{2(1+\qm\eps(2r))}\int_{\R^d}|\nabla U|^2\,dx,
\end{equation}
and we set 
\begin{equation*}
E(U,B_r)=\int_{B_{r}}|\nabla U|^2\,dx+\Lambda|\{U\neq0\}\cap B_{r}|.
\end{equation*}
Thus, we obtain the inequality 
\begin{equation}\label{mainenergy08}
\begin{split}
E(U,B_{r})&\le -2(1+\qm\eps(2r))\sum_{i=1}^k\Big(\int_{B_{2r}\setminus B_{r}}\beta (u_i^+-\eta)_+\,dx+\int_{B_{2r}\setminus B_{r}}\beta (u_i^--\eta)_+\,dx\Big)\\
&\qquad +C_d\left(\beta r+\frac{\alpha}{r}\right)\sum_{i=1}^k\int_{\partial B_{r}}|u_i|\,d\HH^{d-1}
 +\qm\eps(2r)\int_{\R^d}|\nabla U|^2\,dx\\
 &\leq C_d \left(\beta r+\frac{\alpha}{r}\right)\sum_{i=1}^k\int_{\partial B_{r}}|u_i|\,d\HH^{d-1}+K\|\nabla U\|_{L^2(\R^d;\R^k)}^2\sum_{i=1}^k\int_{ B_{r}}|u_i|\,dx,\\
\end{split}
\end{equation}
since, thanks to the choice of $\beta>0$ and the fact that $\eta=0$ in $B_r$, we have 
\[
\eps(2r)-\sum_{i}\Big(\int_{B_{2r}\setminus B_{r}} (u_i^+-\eta)_+\,dx+\int_{B_{2r}\setminus B_{r}} (u_i^--\eta)_+\,dx\Big)=\sum_{i=1}^k\int_{B_r}|u_i|\,dx.
\]
We now aim to estimate the term in the right hand side of~\eqref{mainenergy08} by $E(U,B_{r})$. 
By the $W^{1,1}$ trace inequality in $B_{r}$ we have 
\begin{equation*}
\begin{split}
\int_{\partial B_{r}} |u_i|\,d\HH^{d-1}&\le C_d\Big(\int_{B_{r}}|\nabla u_i|\,dx+\frac1r\int_{B_{r}}|u_i|\,dx\Big)\\
&\le C_d\Big(\frac12\int_{B_{r}}|\nabla u_i|^2\,dx+\frac12|\{|u_i|>0\}\cap B_{r}|\Big)+\frac{C_d}r c_0r|\{|u_i|>0\}\cap B_{r}|\\
&\le  C_d \,(1+c_0)\max\left\{1,{1/\Lambda}\right\}E(U,B_{r}).
\end{split}
\end{equation*}
Summing the above inequality for $i=1,\dots,k$ we get 
$$\sum_{i=1}^k\int_{\partial B_{r}} |u_i|\,d\HH^{d-1}\le k\, C_d \,(1+c_0)\max\left\{1,{1/\Lambda}\right\}E(U,B_{r})=:C_{k,d,\Lambda,c_0} \,E(U,B_{r}).$$
Since the above inequality holds also for every $s\in (0,r]$ we get
$$\sum_{i=1}^k\int_{B_{r}} |u_i|\,dx=\sum_{i=1}^k\int_0^r\,ds\int_{\partial B_{s}} |u_i|\,d\HH^{d-1}\le  C_{k,d,\Lambda,c_0} \int_0^r\,ds\, E(U,B_{s})\le r\,C_{k,d,\Lambda,c_0} \, E(U,B_{r}) .$$
We can finally estimate the right hand side of \eqref{mainenergy08} obtaining 
\begin{equation*}
\begin{split}
E(U,B_r)&\le C_{k,d,\Lambda,c_0}\left(\beta r+\frac\alpha{r}+rK\|\nabla U\|_{L^2(\R^d;\R^k)}^2\right) E(U,B_r)\\
&\le C_{k,d,\Lambda,c_0}\left(2K\|\nabla U\|_{L^2(\R^d;\R^k)}^2 r_0+c_0\right) E(U,B_r).
\end{split}
\end{equation*}
}
Choosing $r_0$ and $c_0$ such that 
\begin{equation}\label{choice2}
k\, C_d \,(1+c_0)\max\left\{1,{1/\Lambda}\right\}\left(2K\|\nabla U\|_{L^2(\R^d;\R^k)}^2 r_0+c_0\right)<1,
\end{equation}
for a dimensional constant $C_d>0$, we get that $E(U,B_r)=0$ and so we obtain the claim.
\end{proof}

\begin{oss}[Subharmonicity of $|U|$]\label{Usub}
Let $\Omega\subset\R^d$ be an open set of finite measure and $u_1,\dots,u_k$ be the first $k$ normalized eigenfunctions on $\Omega$. Then the function $|U|=|(u_1,\dots,u_k)|$ satisfies, weakly in $H^1(\R^d)$, the inequality 
$$\Delta |U|+\lambda_k(\Omega)|U|\ge 0\quad\text{in}\quad\R^d,\qquad |U|\in H^1_0(\Omega).$$
In fact, on the set $\omega:=\{|U|>0\}$, $|U|$ satisfies the inequality
\begin{align*}
\Delta |U|&=\sum_j \left[\frac{u_j\Delta u_j}{|U|}+\frac{|\nabla u_j|^2}{|U|}-\frac{u_j\nabla u_j\cdot\nabla|U|}{|U|^2}\right]\\
& =-\frac{1}{|U|}\sum_j\lambda_j(\Omega) u_j^2+\frac1{|U|^3}\sum_{i,j} \Big(u_i^2 |\nabla u_j|^2-u_iu_j\nabla u_i\cdot\nabla u_j\Big)\ge -\lambda_k(\Omega)|U|,
\end{align*}
while the result on the entire space follows from the fact that $|U|$ is positive.
\end{oss}

\begin{oss}[Equivalent definitions of non-degeneracy]
Suppose that $u\in H^1(B_R)$ is such that:
\begin{enumerate}
\item $u\ge 0$ and $\Delta u+1\ge 0$ weakly in $H^1_0(B_R)$.
\item There are constants $c_0$ and $r_0$ such that for all $r\leq r_0$,
$$\Big(\|u\|_{L^\infty(B_{2r})}\le c_0 r\Big)\Rightarrow\Big(u\equiv 0\quad\text{in}\quad B_r\Big).$$ 
\end{enumerate}
Then there are constants $r_1$ and $c_1$, depending only on the dimension $d$ and the constants $c_0$ and $r_0$, such that the following implication hold for every $r\le r_1$:
$$\Big(\mean{B_r}{u\,dx}\le c_1 r\Big)\Rightarrow\Big(u\equiv 0\quad\text{in}\quad B_{r/4}\Big),$$ 
$$\Big(\mean{\partial B_r}{u\,d\HH^{d-1}}\le c_1 r\Big)\Rightarrow\Big(u\equiv 0\quad\text{in}\quad B_{r/4}\Big).$$ \end{oss}

\subsection{Density estimate and non-degeneracy of the first eigenfunction}
First of all we prove a non-degeneracy result for the gradient, which will lead to a non-degeneracy for $u_1$.
\begin{lemma}[Non-degeneracy of $|\nabla U|$]\label{nondegdu}
Let $\Omega$ be an optimal set for problem~\eqref{introP} and let $U=(u_1,\dots,u_k)$ be the vector of the first $k$ normalized eigenfunctions. Then there are constants $c>0$ and $r>0$ such that
\begin{equation}\label{nondeggradU}
\big|\nabla U\big|^2=\sum_{j=1}^k|\nabla u_j|^2\ge c\quad\text{on the set}\quad S_r:=\big\{x\in\Omega\ :\ \text{dist}(x,\partial\Omega)\le r\big\}.
\end{equation} 
\end{lemma}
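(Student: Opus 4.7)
I would argue by contradiction via a boundary blow-up. Suppose there is a sequence $\{x_n\}\subset\Omega$ with $d_n:=\operatorname{dist}(x_n,\partial\Omega)\to 0$ and $|\nabla U(x_n)|^2\to 0$, and choose $y_n\in\partial\Omega$ with $|x_n-y_n|=d_n$. Define the blow-up
\[
V_n(z):=\frac{U(y_n+d_n z)}{d_n},\qquad z\in\R^d.
\]
By the global Lipschitz bound on $U$ (Proposition~\ref{prop:nondeg}(1)), $\{V_n\}$ is equi-Lipschitz with $V_n(0)=0$, so along a subsequence $V_n\to V_\infty$ locally uniformly, with $V_\infty$ also $L$-Lipschitz and $V_\infty(0)=0$.

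The three key properties of $V_\infty$ needed for the contradiction all come from Section~\ref{sec:nondeg}. First, the non-degeneracy of $|U|$ at the boundary point $y_n$ (Proposition~\ref{prop:nondeg}(2), via Lemma~\ref{nondegcaflemma}) rescales to $\sup_{B_r(0)}|V_\infty|\ge c_0 r$ for every $r>0$, so $V_\infty\not\equiv 0$ and satisfies a global linear growth from the origin. Second, each component of $V_n$ satisfies $-\Delta v_n^{(i)}=d_n^2\lambda_i v_n^{(i)}$ on $\{|V_n|>0\}$; passing to the limit, $V_\infty$ is componentwise harmonic on the open set $\{|V_\infty|>0\}$, and interior $C^{1,\alpha}$ estimates give $\nabla V_n\to\nabla V_\infty$ locally uniformly on compact subsets of this set. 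Third, the quasi-minimality \eqref{Uoptcond} is preserved under the rescaling: the multiplicative correction $K\|U-\widetilde U\|_{L^1}$ vanishes in the limit (the admissible $L^1$-perturbation in $B_R$ scales as $d_n^{d+1}$), so $V_\infty$ is a genuine global minimizer of the vectorial one-phase functional $V\mapsto \int|\nabla V|^2\,dx+\Lambda|\{|V|>0\}|$.

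Setting $z_n:=(x_n-y_n)/d_n\in\partial B_1$ and, up to further subsequence, $z_n\to z_\infty\in\partial B_1$, the hypothesis $|\nabla V_n(z_n)|=|\nabla U(x_n)|\to 0$ forces $\nabla V_\infty(z_\infty)=0$ whenever $z_\infty\in\operatorname{int}\{|V_\infty|>0\}$. This is ruled out by comparing the linear growth $|V_\infty(t z_\infty)|\ge c_0 t$, valid for $t\in[0,1]$ by the global non-degeneracy, with the Lipschitz bound $|V_\infty(t z_\infty)|\le L t$, along the segment $[0,z_\infty]$: on the portion of the segment that lies in the positivity set, componentwise harmonicity together with a Hopf-type argument prevents the vanishing of $\nabla V_\infty$ at the endpoint. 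The remaining possibility $z_\infty\in\partial\{|V_\infty|>0\}$ is excluded by the rescaled density estimate from Corollary~\ref{densestcor}, which prevents the positivity set of $V_\infty$ from pinching off at $z_\infty$ and reduces the matter to a nearby interior point at a universally controlled distance.

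\textbf{Main obstacle.} The delicate step is translating the pointwise linear-growth/non-degeneracy dichotomy satisfied by $|V_\infty|$ into the desired lower bound on $|\nabla V_\infty|$ at $z_\infty$, using only componentwise harmonicity together with the Lipschitz/non-degeneracy pair, i.e.\ without appealing to the Weiss monotonicity formula or the blow-up classification developed later in Sections~\ref{ssmono}--\ref{sec:blowup}. The vectorial setting genuinely differs from the scalar Alt--Caffarelli framework here, since the absence of a sign for $U$ obstructs a direct Hopf-based argument and a careful componentwise analysis of the harmonic limit is required.
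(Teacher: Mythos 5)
Your compactness setup is sound as far as it goes: the rescalings $V_n(z)=d_n^{-1}U(y_n+d_nz)$ are equi-Lipschitz and converge locally uniformly (up to a subsequence) to a limit $V_\infty$ that is componentwise harmonic on $B_1(z_\infty)$ --- this already follows from the stability of the equation on $B_1(z_n)\subset\Omega_n$, so you do not actually need the Hausdorff convergence of the free boundaries, which is fortunate because that convergence (Proposition~\ref{blowupexistence}(c)) and the upper density bound of Corollary~\ref{densestcor} rest on Lemma~\ref{nondegu1} and hence on the very lemma you are proving --- and interior estimates then give $\nabla V_\infty(z_\infty)=0$. The genuine gap is the final step. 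First, the non-degeneracy inherited from Lemma~\ref{nondegcaflemma} is a bound on $\sup_{B_t(0)}|V_\infty|$, not a pointwise bound along the ray towards $z_\infty$, so $|V_\infty(tz_\infty)|\ge c_0t$ does not follow. Second, and more seriously, the properties you have actually secured for $V_\infty$ (Lipschitz, componentwise harmonic in $B_1(z_\infty)$, vanishing and non-degenerate at the boundary point $0$) are simply compatible with $\nabla V_\infty(z_\infty)=0$, so no contradiction can be extracted from them: a Hopf-type lemma bounds the normal derivative at the boundary point where the minimum is attained, not the gradient at the center of the ball. Concretely, in the plane $v(x,y)=(x-1)^2-y^2-1$ is harmonic and $2$-Lipschitz on $B_1(e_1)$, vanishes at the origin, satisfies both $\sup_{B_t(0)}|v|\ge t$ and even the ray-wise bound $|v(te_1)|=t(2-t)\ge t$ for $t\in[0,1]$, and yet $\nabla v(e_1)=0$. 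So the step you flag as the ``main obstacle'' is not merely delicate; as formulated it fails, and repairing it with the minimality of $V_\infty$ would require the blow-up classification of Sections~\ref{ssmono}--\ref{sec:blowup}, which you rightly note is not yet available.

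The paper's proof never leaves the original scale and supplies exactly the quantitative input your limit lacks: an integral lower bound $\mean{B_\rho(x_0)}{|\nabla U|^2\,dx}\ge\overline c$ on the full ball $B_\rho(x_0)$ with $\rho=\text{dist}(x_0,\partial\Omega)$. This is obtained by combining the non-degeneracy of $|U|$ (via H\"older, giving $\int_{B_\rho\cap\Omega}|U|^2\,dx\ge c\rho^{d+2}$) with the Poincar\'e inequality applied to $u_j^\pm-h_j^\pm$, where $h_j^\pm$ is the harmonic replacement of $u_j^\pm$ in the ball. The average bound is then transported to the center: some directional derivative $v=\nabla_eu_j$ has large average over $B_\rho(x_0)$, and since $|\Delta v|\le\lambda_k(\Omega)L$ there, the sub-mean value property yields $v(x_0)\ge\mean{B_\rho(x_0)}{v\,dx}-\rho^2\lambda_k(\Omega)L>0$ for $\rho$ small. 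If you wish to keep your blow-up formulation, it is this integral gradient estimate (not the pointwise linear growth of $|V_\infty|$) that must be carried through the limit, after which the exact mean value property for the harmonic functions $\nabla_ev_j$ on $B_1(z_\infty)$ closes the argument.
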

\begin{proof}
The key point of our proof is that there are constants $\overline c>0$ and $\overline r>0$ such that 
\begin{equation}\label{claimnondeg}
\overline c\le \mean{B_\rho(x_0)}{\big|\nabla U\big|^2\,dx},\quad\text{where}\quad \rho=\text{dist}(x_0,\partial\Omega)\le \overline r.
\end{equation}
We prove this starting from the non-degeneracy of $U$, which implies (applying an H\"older inequality) that for all $r\le r_0$ and for some constant $c$, \[
\int_{B_r\cap \Om}{|U|^2}\geq c r^{d+2}.
\]
For all $j=1,\dots, k$ we consider $u^{\pm}_j$ and we call $h_j^\pm$ their harmonic extension of $u_j^\pm$ in $B_r$.
For all $j=1,\dots, k$, we can deduce, using also the Poincar\'e inequality, \[
\begin{split}
  &\frac{c}{r^2}\int_{B_r\cap \Om}{(u_j^\pm)^2\,dx}\leq \frac{c}{r^2}\int_{B_r}{(u_j^\pm-h_j^\pm)^2\,dx}\leq \int_{B_r}{|\nabla (u_j^\pm-h_j^\pm)|^2\,dx}\\
  &=\int_{B_r}{|\nabla u_j^\pm|^2-|\nabla h_j^\pm|^2\,dx}\leq \int_{B_r}{|\nabla u_j^\pm|^2\,dx},
\end{split}
\]
for some constant $c$.
Then summing up over $j$ and using the non-degeneracy of $U$, we obtain \[
\int_{B_r}{|\nabla U|^2}=\sum_{j=1}^k{\int_{B_r}{|\nabla u^+_j|^2+|\nabla u^-_j|^2}}\geq \frac{c_1}{r^2}\int_{B_r\cap \Om}{|U|^2}\geq c_2 r^d,
\] 
for some constants $c_1, c_2$. This easily implies the claim~\eqref{claimnondeg}.

Then, for every $x_0\in S_{\overline r}$ there is $j\in\{1,\dots,k\}$ and $e\in\{e_1,\dots,e_d\}$ such that $\ds c_0\le \mean{B_\rho(x_0)}{\nabla_e u_j\,dx}$. On the other hand, on the ball $B_\rho(x_0)\subset \Omega$, the function $v=\nabla_e u_j$ satisfies the equation $-\Delta v=\lambda_j(\Omega)v$ and so we have 
$$|\Delta v|\le \lambda_k(\Omega)L,$$
where $L$ denotes the Lipschitz constant of $U$.
Thus, by the subharmonicity of $v(x)+|x-x_0|^2\lambda_k(\Omega)L$ we have 
$$|\nabla u_j|\ge \nabla_e u_j\ge \mean{B_\rho(x_0)}{\nabla_e u_j\,dx}-\rho^2\lambda_k(\Omega)L\ge c_0-r_0^2\lambda_k(\Omega)L,$$
which concludes the proof.
\end{proof}

It is important to highlight that, until now, we needed as hypothesis on $U$ only a quasi-minimality condition~\eqref{Uoptcond} and no sign assumption on the $u_i$ was involved. On the other hand, in the next lemmas, it will become essential that the first component $u_1$ of the vector $U$ is positive.
\begin{lemma}[Non-degeneracy of $u_1$]\label{nondegu1}
Suppose that $\Omega$ is a connected optimal set for problem~\eqref{introP}. Then there is a constant $C>0$ such that $C u_1\ge |U|$ on $\Omega$.
\end{lemma}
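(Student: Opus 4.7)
The plan is to show that $u_1$ grows at least linearly away from $\partial\Omega$: there exist constants $c>0$ and $r_0>0$ such that
\[
u_1(x)\ \ge\ c\,\operatorname{dist}(x,\partial\Omega)\quad\text{whenever}\quad x\in\Omega\ \text{and}\ \operatorname{dist}(x,\partial\Omega)\le r_0.
\]
Combined with the Lipschitz bound $|U(x)|\le L\,\operatorname{dist}(x,\partial\Omega)$ coming from Proposition~\ref{prop:nondeg}\,(1), this immediately yields $|U|\le (L/c)\,u_1$ on the strip $\{\operatorname{dist}(\cdot,\partial\Omega)\le r_0\}\cap\Omega$. On the complementary region, which is a compact subset of $\Omega$ (since $\Omega$ is bounded, by Theorem~\ref{knownstuffmainopt0}), the continuous function $u_1$ is uniformly positive---the first Dirichlet eigenfunction is strictly positive on the connected open set $\Omega$ by the strong maximum principle---while $|U|\le\|U\|_{L^\infty}$ is bounded. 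The bound $|U|\le Cu_1$ on all of $\Omega$ then follows by taking $C$ to be the maximum of the two constants.

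To prove the linear lower bound, I would fix $x_0\in\Omega$ with $r:=\operatorname{dist}(x_0,\partial\Omega)\le r_0$ and a boundary point $y_0\in\partial\Omega$ with $|x_0-y_0|=r$. The density estimate in Corollary~\ref{densestcor}\,(2) applied at $y_0$ at scale $r$ supplies a point $x_1\in\partial B_{r/2}(y_0)$ with $B_{\delta r}(x_1)\subset\Omega$. Since $u_1>0$ on $B_{\delta r/2}(x_1)$ by connectedness, the non-degeneracy of Proposition~\ref{prop:nondeg}\,(3) at scale $\delta r$ gives
\[
\meantext{B_{\delta r}(x_1)} u_1\,dx\ \ge\ c_0\,\delta r.
\]
Because $-\Delta u_1=\lambda_1 u_1\ge 0$ in $\Omega$, the function $u_1$ is superharmonic on $B_{\delta r}(x_1)\subset\Omega$, and the super-mean-value property yields $u_1(x_1)\ge c_0\delta r$. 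The interior Harnack inequality applied to the positive solution $u_1$ of $-\Delta u_1=\lambda_1 u_1$ on $B_{\delta r}(x_1)$ upgrades this to the uniform estimate $u_1\ge c'\,r$ on $B_{\delta r/2}(x_1)$.

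The main obstacle, and the final step, is to transport this lower bound from $x_1$ to $x_0$. Since $|x_0-x_1|\le 3r/2$ may exceed $\delta r/2$, the point $x_0$ need not lie in the ball on which the estimate already holds, and $\Omega$ is not a priori known to be a John or NTA domain. I would argue by Harnack chain: cover a piecewise linear path from $x_1$ to $x_0$ lying in $\Omega$ by a number of balls, controlled only by $d$ and $\delta$, each of radius proportional to $r$ and contained in $\Omega$, by iteratively invoking the density estimate of Corollary~\ref{densestcor}\,(2) at auxiliary boundary points near the path; then apply the interior Harnack inequality along each link. This produces $u_1(x_0)\ge c''\,r$ for a universal $c''>0$, completing the linear lower bound and hence the lemma.
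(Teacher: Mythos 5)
Your argument has a genuine gap, and it is a circularity. The crucial step is the linear lower bound $u_1(x)\ge c\,\mathrm{dist}(x,\partial\Omega)$, and to get it you invoke Proposition~\ref{prop:nondeg}\,(3), the non-degeneracy of $u_1$. But in the logical development of the paper that statement is a \emph{consequence} of Lemma~\ref{nondegu1}: the only available non-degeneracy at this stage is that of the full vector $|U|$ (Lemma~\ref{nondegcaflemma}), because the quasi-minimality condition~\eqref{Uoptcond} controls perturbations of $U$ as a whole -- truncating the single component $u_1$ does not change $\{|U|>0\}$, so the Alt--Caffarelli truncation argument gives nothing for $u_1$ alone. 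Passing from the non-degeneracy of $|U|$ to that of $u_1$ requires precisely the inequality $Cu_1\ge |U|$ you are trying to prove. In effect you have reduced the lemma to an equivalent statement (given the Lipschitz bound, linear growth of $u_1$ and $Cu_1\ge|U|$ are interchangeable) and then assumed that statement. A second, independent problem is the final Harnack chain: Corollary~\ref{densestcor}\,(2) is only a corkscrew-type condition, producing one ball $B_{\delta r}(x_1)$ somewhere on $\partial B_{r/2}(y_0)$ with no control on its position; iterating it at auxiliary boundary points does not produce overlapping balls joining $x_1$ to $x_0$ inside $\Omega$. The Harnack chain (NTA) property of $\Omega$ is established only much later in the paper, near regular points and via Reifenberg flatness, and cannot be used here.

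The actual mechanism that breaks the circle is the non-degeneracy of the \emph{gradient}, Lemma~\ref{nondegdu}: $|\nabla U|^2\ge c$ on a strip $S_r$ near $\partial\Omega$. With this, $v=|U|+|U|^2/2$ satisfies $\Delta v\ge c-\lambda_k(\Omega)(|U|+|U|^2)>0$ on the strip (where $|U|$ is small), i.e.\ $v$ is subharmonic there, while $u_1$ is superharmonic on $\Omega$ and, by connectedness and compactness, uniformly positive on the inner region $\Omega_r$. The maximum principle applied to $Mu_1-v$ on the strip (both terms vanish on $\partial\Omega$, and $Mu_1\ge v$ on the inner boundary for $M$ large) gives $Mu_1\ge v\ge |U|$ everywhere, with no Harnack chain and no a priori non-degeneracy of $u_1$. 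If you want to keep your two-region decomposition, you must replace the appeal to Proposition~\ref{prop:nondeg}\,(3) by an argument of this barrier type.
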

\begin{proof}
Let $r$ and $c$ be as in \eqref{nondeggradU}. 
Consider the function $v=|U|+|U|^2/2$. On the strip $S_r$ we have 
$$\Delta v=\Delta |U|+\sum_{j=1}^k (|\nabla u_j|^2+u_j\Delta u_j)\ge c_0-\lambda_k(\Omega)(|U|+|U|^2).$$
Since $|U|$ is continuous and $0$ on $\partial\Omega$ we have that there is $r>0$ such that $v$ is subharmonic on the strip $S_{r}$.

Let $\Omega_r=\{x\in\Omega\ :\ \text{dist}(x,\partial\Omega)\ge r\}$. Since $\Omega$ is connected we have that $\inf_{x\in\Omega_r}u_1>0$ and so there is $M>0$ such that $Mu_1\ge v$ on $\Omega_r$. On the other hand $u_1$ is superharmonic on $S_r$ which gives that $Mu_1\ge v\ge |U|$ on the entire domain $\Omega$. 
\end{proof}

The last lemma of this Section provides a density estimate for the optimal set $\Om_U$. We remark that, in order to obtain the upper bound on the density, it is fundamental to know that $u_1$ is non-negative and non-degenerate: without this assumption we are not able to prove such a claim. 
Here is the main difficulty if one wants to prove an extension of the Alt-Caffarelli result to the vectorial case in the general setting.
\begin{lemma}[Density estimate for $\Om_U$]\label{densestlemma}
	Suppose that $U\in C(B_{R};\R^k)$ is a Lipschitz continuous function satisfying the quasi-minimality condition \eqref{Uoptcond}. Then there are constant $r_0>0$ and $\eps_0>0$ such that 
	\begin{equation}\label{densest}
	\eps_0|B_r|\le \big|\{|U|>0\}\cap B_r(x_0)\big|\le (1-\eps_0)|B_r|,\quad\text{for every}\quad x_0\in \partial\{|U|>0\}\quad\text{and}\quad r\le r_0.   
	\end{equation}
\end{lemma}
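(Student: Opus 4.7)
The two halves of \eqref{densest} are of different nature. The lower bound is a direct consequence of the non-degeneracy of $|U|$ combined with the Lipschitz regularity: since $x_0\in\partial\{|U|>0\}$, applying Lemma~\ref{nondegcaflemma} on $B_{r/4}(x_0)$ excludes the alternative $U\equiv0$ in $B_{r/4}(x_0)$, so $\|U\|_{L^\infty(B_{r/2}(x_0))}\ge c_0 r/4$; picking $y\in B_{r/2}(x_0)$ with $|U(y)|\ge c_0r/8$ and using the Lipschitz bound with constant $L$ I get $|U|>0$ on a ball of radius $\sim r/L$ around $y$, contained in $B_r(x_0)$ for a suitable choice of constants. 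This immediately yields $|\{|U|>0\}\cap B_r(x_0)|\ge\eps_0|B_r|$.

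The upper bound is the delicate part and is the place where the non-negativity and non-degeneracy of $u_1$ enter in an essential way (in the intended applications $u_1$ is the first normalized eigenfunction, so that Lemma~\ref{nondegu1} and Proposition~\ref{prop:nondeg}(3) are available). I would test \eqref{Uoptcond} against the competitor $\tilde U=(\tilde u_1,u_2,\dots,u_k)$ obtained by replacing only the first component by its harmonic extension $\tilde u_1$ in $B_r(x_0)$, keeping the other components unchanged. Admissibility is clear: $\|\tilde u_1\|_{L^\infty}\le \|u_1\|_{L^\infty}$ by the maximum principle and $\|U-\tilde U\|_{L^1}\le 2L r\,|B_r|\le\eps$ for $r$ small. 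Since $u_1\ge 0$ and $u_1\not\equiv 0$ on $\partial B_r$ (non-degeneracy), the strong maximum principle forces $\tilde u_1>0$ in $B_r$, hence $|\{|\tilde U|>0\}\cap B_r|=|B_r|$. Substituting in \eqref{Uoptcond}, cancelling the Dirichlet contributions of $u_2,\dots,u_k$ and using the $H^1_0$--harmonic orthogonality $\int_{B_r}\nabla\tilde u_1\cdot\nabla(u_1-\tilde u_1)=0$ gives
\[
\int_{B_r}\bigl|\nabla(u_1-\tilde u_1)\bigr|^2\;\le\;\Lambda\,\bigl|\{|U|=0\}\cap B_r\bigr|+C\,r^{d+1}.
\]

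The core of the proof is then to match this with the lower bound $\int_{B_r}|\nabla(u_1-\tilde u_1)|^2\ge c\,r^d$. I would proceed as follows: the equivalent mean formulation of non-degeneracy from the Remark after Lemma~\ref{nondegcaflemma} gives $\mean{\partial B_r}u_1\ge c_1 r$, and by the mean value property this equals $\tilde u_1(x_0)$; the Harnack inequality for the non-negative harmonic function $\tilde u_1$ in $B_r$ propagates the bound to $\tilde u_1\ge c_H c_1 r$ on $B_{r/2}(x_0)$; and the Lipschitz bound $u_1(x)\le L|x-x_0|$ combined with $u_1(x_0)=0$ forces $\tilde u_1-u_1\ge c_H c_1 r/2$ on the ball $B_\rho(x_0)$ with $\rho:=\min\{r/2,\,c_H c_1 r/(2L)\}\sim r$. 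Squaring, integrating and applying the Poincar\'e inequality on $H^1_0(B_r)$ then give $\int_{B_r}|\nabla(u_1-\tilde u_1)|^2\gtrsim r^d$, and absorbing the lower order $Cr^{d+1}$ error for $r\le r_0$ small closes the upper density bound. The main obstacle is precisely this pointwise lower bound $\tilde u_1(x_0)\ge c_1 r$: in a purely vectorial setting the Poisson-kernel mean value yields only $|\tilde U(x_0)|^2\le\mean{\partial B_r}|U|^2$, so one genuinely needs a non-negative scalar component on which to apply Harnack in order to run the Dirichlet-energy comparison. This is why the lemma is applied in practice only in the good case $u_1\ge 0$, and why Theorem~\ref{altcafvect} retains the hypothesis that one trace be positive.
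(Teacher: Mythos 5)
Your proof is correct and follows essentially the same route as the paper: the lower bound via non-degeneracy of $|U|$ plus Lipschitz continuity, and the upper bound by testing \eqref{Uoptcond} with the harmonic replacement of $u_1$ alone in $B_r(x_0)$, then combining the orthogonality of the harmonic extension, the Poincar\'e inequality, the Harnack inequality for $\tilde u_1$ together with the Lipschitz bound on $u_1$, and absorption of the $Cr^{d+1}$ quasi-minimality error. Your closing remark that the non-negativity and non-degeneracy of $u_1$ (Lemma~\ref{nondegu1}) are the essential extra inputs for the upper bound is exactly the point the paper itself makes before stating the lemma.
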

\begin{proof}
	The proof follows by the same argument as in \cite{altcaf}. We assume that $x_0=0$. By Lemma \ref{nondegcaflemma} we have that for $r$ small enough $\ds\|U\|_{L^\infty(B_{r/2})}\ge \frac{c_0 r}4$. Thus there is some $x_r\in B_{r/2}$ such that $\ds|U|(x_r)\ge \frac{c_0 r}4$. On the other hand $|U|$ is Lipschitz continuous, and so, setting $\ds\theta=\inf\Big\{\frac12,\frac{c_0}{4\|\nabla |U|\|_{L^\infty}}\Big\}$ we have that $|U|>0$ on $B_{\theta r}(x_r)$ and this proves the lower bound in \eqref{densest}.\\
	
	For the upper bound, we notice that since $0\in\partial\{|U|>0\}$ we can apply Lemma~\ref{nondegu1} obtaining that there are constants $c_1$ and $r_0$ such that 
	$$\ds \mean{\partial B_r}{u_1\,d\HH^{d-1}}\ge c_1 r\quad \text{for every}\quad r\le r_0.$$  
Let $\widetilde U=(\tilde u_1,\dots,u_k)$, where $\tilde u_1$ is the harmonic extension of $u_1$ on the ball $B_r$. By the quasi-optimality of $U$ we have 
	\begin{equation}\label{qopteqdenslem}
	\begin{split}
	\Lambda\big| \{|U|=0\}\cap B_r\big|
	&\ge \int_{B_R}|\nabla U|^2\,dx-\Big(1+\qm\|U-\widetilde U\|_{L^1}\Big)\int_{B_R}|\nabla \widetilde U|^2\,dx\\
	&\ge \int_{B_r}|\nabla (U-\widetilde U)|^2\,dx-\qm\|U-\widetilde U\|_{L^1}\int_{B_R}|\nabla U|^2\,dx\\
	&=\int_{B_r}|\nabla (u_1-\widetilde u_1)|^2\,dx-\qm\|u_1-\widetilde u_1\|_{L^1}\int_{B_R}|\nabla U|^2\,dx.
	\end{split}
	\end{equation}
Let $L=\|\nabla u_1\|_{L^\infty}$. Then $\|u_1\|_{L^\infty(B_r)}\le Lr$ and by the maximum principle $\|\widetilde u_1\|_{L^\infty(B_r)}\le Lr$. Thus we have the estimate 
\begin{equation}\label{ungranbelfilm1}
\qm\|u_1-\widetilde u_1\|_{L^1}\int_{B_R}|\nabla U|^2\,dx\le \omega_d\qm L\|\nabla U\|_{L^2(B_R)}r^{d+1}=:Cr^{d+1}.
\end{equation}
In order to estimate $\ds\int_{B_r}|\nabla (u_1-\widetilde u_1)|^2\,dx$ we first notice that by the Poincar\'e inequality in $B_r$ we have 
\begin{equation}\label{granbelfilm2}
\int_{B_r}|\nabla (u_1-\widetilde u_1)|^2\,dx\ge \frac{\lambda_1(B_1)}{r^2}\int_{B_r}| u_1-\widetilde u_1|^2\,dx\ge \frac{C_d}{|B_r|}\Big(\frac1r\int_{B_r}|u_1-\widetilde u_1|\,dx\Big)^2.
\end{equation}
Let $\kappa\in(0,1/3)$. Since $\widetilde u_1$ is non-negative and harmonic in $B_r$ the Harnack inequality for $\widetilde u_1$ together with the non-degeneracy of $u_1$ gives that
	$$c_1r\le \mean{\partial B_r}{ u_1\,d\HH^{d-1}} =\mean{\partial B_r}{\widetilde u_1\,d\HH^{d-1}}=\widetilde u_1(0)\le\max_{B_{\kappa r}}\widetilde u_1\le \left(\frac{1-\kappa}{1-3\kappa}\right)^d \min_{B_{\kappa r}}\widetilde u_1,$$
while the Lipschitz continuity of $u_1$ gives that  $\ds \max_{B_{\kappa r}} u_1\le L\kappa r$.
Thus for $\kappa$ small enough (depending on $d$, $c_1$ and $L$) we have 
$$\widetilde u_1\ge |u_1|+\frac{c_1}{3}r\quad\text{in}\quad B_{\kappa r}.$$
	Together with \eqref{qopteqdenslem}, \eqref{ungranbelfilm1} and \eqref{granbelfilm2} this gives
	$$\Lambda\big| \{|U|=0\}\cap B_r\big|
	\ge \frac{C_d}{|B_r|}\Big(\frac1r\int_{B_{\kappa r}}|u_1-\widetilde u_1|\,dx\Big)^2 - C r^{d+1}\ge C_d c_1^2\kappa^{2d} r^d-Cr^{d+1}\ge \frac{C_d c_1^2\kappa^{2d}}2 r^d, $$
for $r$ small enough.	
\end{proof}

\section{Weiss monotonicity formula}\label{ssmono}

In this section we establish a monotonicity formula in the spirit of \cite{weiss99}. 
Following the original notation from \cite{weiss99}, for a function $U\in H^1(\R^d;\R^k)$ we define 
\begin{equation}\label{weiss_fun}
\phi(U,x_0,r):=\frac{1}{r^{d}}\left(\int_{B_r(x_0)}|\nabla U|^2\,dx+\Lambda\big|\{|U|>0\}\cap B_r(x_0)\big|\right)-\frac{1}{r^{d+1}}\int_{\partial B_r(x_0)}|U|^2\,d\HH^{d-1}.
\end{equation}
The monotonicity of $\phi(U,x_0,\cdot)$ is related to the classification of the blow-up limits and is an essential tool for proving the regularity of the free boundary. The following proposition concerns the case when $U$ is the vector of the first $k$ eigenfunctions on an optimal set.
\begin{prop}[Monotonicity formula for the optimal eigenfunctions]\label{mono_weiss} Let $\Omega$ be optimal for \eqref{introP} and let $U=(u_1,\dots,u_k)\in H^1_0(\Omega;\R^k)$ be the vector of the first $k$ normalized eigenfunctions on $\Omega$. Suppose that $x_0\in \partial\Omega$. Then there are constants $r_0$ and $C_1$ such that the function $r\mapsto\phi(U,x_0,r)$ satisfies the following inequality for every $r\le r_0$ :
$$\frac{d}{dr}\phi(U,x_0,r)\ge \frac{1}{r^{d+2}}\sum_{i=1}^k\int_{\partial B_r(x_0)} |(x-x_0)\cdot\nabla u_i-u_i|^2\,dx -C_1.$$
Moreover, the limit $\ds \lim_{r\to 0^+}\phi(U,x_0,r)$ exists and is a real number. 
\end{prop}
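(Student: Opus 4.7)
\medskip

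\noindent\textbf{Plan.} Without loss of generality set $x_0=0$. The strategy is to imitate the classical Weiss argument from~\cite{weiss99}, but using the quasi-minimality~\eqref{Uoptcond} of $U$ (from Proposition~\ref{quasimin_brno}) in place of true minimality. First I would differentiate $\phi(U,0,r)$ directly. Writing $a(r):=\int_{B_r}|\nabla U|^2\,dx$, $b(r):=|\{|U|>0\}\cap B_r|$, $c(r):=\int_{\partial B_r}|U|^2\,d\HH^{d-1}$, one has $\phi(U,0,r)=r^{-d}(a(r)+\Lambda b(r))-r^{-d-1}c(r)$, and after rescaling the boundary term to $S^{d-1}$ one obtains $c'(r)=\tfrac{d-1}{r}c(r)+2\int_{\partial B_r}U\cdot\partial_r U\,d\HH^{d-1}$, which gives
\begin{equation*}
\phi'(U,0,r)=\frac{a'(r)+\Lambda b'(r)}{r^d}-\frac{d(a(r)+\Lambda b(r))}{r^{d+1}}+\frac{2c(r)}{r^{d+2}}-\frac{2}{r^{d+1}}\int_{\partial B_r}U\cdot\partial_r U\,d\HH^{d-1}.
\end{equation*}

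Next I introduce the homogeneous-degree-one competitor $\widetilde U\in H^1(\R^d;\R^k)$ equal to $U$ on $\R^d\setminus B_r$ and $\widetilde U(x):=(|x|/r)\,U(rx/|x|)$ in $B_r$. A direct computation in polar coordinates, together with the coarea identities $a'(r)=\int_{\partial B_r}|\nabla U|^2$ and $b'(r)=\HH^{d-1}(\{|U|>0\}\cap\partial B_r)$, combined with the orthogonal splitting $|\nabla U|^2=|\partial_r U|^2+|\nabla_T U|^2$ on $\partial B_r$, yields
\begin{equation*}
\int_{B_r}|\nabla\widetilde U|^2\,dx+\Lambda\big|\{|\widetilde U|>0\}\cap B_r\big|=\frac{r}{d}\big(a'(r)+\Lambda b'(r)\big)-\frac{r}{d}\int_{\partial B_r}|\partial_r U|^2\,d\HH^{d-1}+\frac{c(r)}{dr}.
\end{equation*}
Testing~\eqref{Uoptcond} against $\widetilde U$ and canceling the bulk contributions on $\R^d\setminus B_r$ gives an inequality relating $a(r)+\Lambda b(r)$ to the right hand side above, modulo the quasi-minimality error $\qm\|U-\widetilde U\|_{L^1}\int_{\R^d}|\nabla U|^2$. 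Since $U$ is Lipschitz and $U(0)=0$ (as $0\in\partial\Omega$), both $|U|$ and $|\widetilde U|$ are bounded by $Lr$ in $B_r$, hence $\|U-\widetilde U\|_{L^1}\le C r^{d+1}$, so the error is $O(r^{d+1})$, which after dividing through by $r^{d+1}$ becomes a bounded constant $-C_1$.

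Rearranging and substituting into the expression for $\phi'(U,0,r)$, the three remaining boundary terms $r^{-d}\int|\partial_r U|^2$, $r^{-d-2}c(r)$, $-2r^{-d-1}\int U\cdot\partial_r U$ are recognized, via the pointwise identity on $\partial B_r$
\begin{equation*}
\sum_{i=1}^k|x\cdot\nabla u_i-u_i|^2=r^2|\partial_r U|^2-2r\,U\cdot\partial_r U+|U|^2,
\end{equation*}
as exactly $r^{-d-2}\sum_i\int_{\partial B_r}|x\cdot\nabla u_i-u_i|^2\,d\HH^{d-1}$, giving the claimed lower bound. For the existence of the limit, the inequality $\phi'(U,0,r)\ge -C_1$ shows that $r\mapsto\phi(U,0,r)+C_1 r$ is non-decreasing, while the Lipschitz bound on $U$ together with $U(0)=0$ gives the uniform lower bound $\phi(U,0,r)\ge -L^2 d\omega_d$; hence the monotone bounded function admits a finite limit as $r\to 0^+$.

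The main technical obstacle is the quasi-minimality error: unlike the classical one-phase setting in~\cite{weiss99}, where $\widetilde U$ is an admissible competitor for free, here the replacement of an eigenfunction by the homogeneous extension destroys the orthonormality constraint, which is what forces the $L^1$-correction in~\eqref{Uoptcond}. The delicate point is that its contribution is precisely of the borderline order $r^{d+1}$, so one has to track the constants carefully (using the a priori Lipschitz estimate of Theorem~\ref{knownstuffmainopt0}(iv)) to see that it does translate into a harmless additive constant $-C_1$ after scaling, rather than swallowing the monotonicity.
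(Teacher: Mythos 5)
Your proposal is correct and follows essentially the same route as the paper: the one-homogeneous radial competitor $\widetilde U(x)=(|x|/r)U(rx/|x|)$ tested in the quasi-minimality condition (this is exactly Lemma~\ref{lemmadorin}), the observation that the $L^1$-error is of order $r^{d+1}$ and hence contributes only an additive constant $-C_1$ to $\phi'$, and the conclusion via monotonicity of $r\mapsto\phi(r)+C_1r$ together with the Lipschitz lower bound $\phi(r)\ge -d\omega_d\|\nabla U\|_{L^\infty}^2$. The only detail worth making explicit is the choice of $r_0$ ensuring $\widetilde U$ is admissible in~\eqref{Uoptcond}, i.e.\ that $\|U-\widetilde U\|_{L^1}\le\eps$ for $r\le r_0$.
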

The last result of the section concerns the vector-valued functions $U=(u_1,\dots,u_k)\in H^1_{loc}(\R^d;\R^k)$ that are local minimizers of the functional $\mathcal F_0(U)=\int |\nabla U|^2\,dx+|\{|U|>0\}|$ in the sense of the following definition.

\begin{deff}\label{locmin}
We say that a function $U\in H^1_{loc}(\R^d;\R^k)$ is a local minimizer (we note that this is sometimes called absolute or global minimizer) of the functional 
$$\mathcal F_0(U)=\int |\nabla U|^2\,dx+\Lambda|\{|U|>0\}|,$$
if for every $B_R\subset\R^d$ and for every function $\tilde U\in H^1_{loc}(\R^d;\R^k)$ such that $\tilde U-U\in H^1_0(B_R;\R^k)$ we have 
$$\int_{B_R}|\nabla U|^2\,dx+\Lambda\big|\{|U|>0\}\cap B_R\big|\le \int_{B_R}|\nabla \tilde U|^2\,dx+\Lambda\big|\{|\tilde U|>0\}\cap B_R\big|.$$
\end{deff}

\begin{prop}[Monotonicity formula for local minimizers of $\mathcal F_0$]\label{mono_weiss_global}  Suppose that $U=(u_1,\dots,u_k)\in H^1_{loc}(\R^d;\R^k)$ is a local minimizer of the functional $\mathcal F_0$ in sense of Definition \ref{locmin}. Then the function $\phi(r):=\phi(U,0,r)$ from \eqref{weiss_fun} satisfies the inequality
$$\phi'(r)\ge \frac{1}{r^{d+2}}\sum_{i=1}^k\int_{\partial B_r} |x\cdot\nabla u_i-u_i|^2\,dx.$$
If moreover, $\phi$ is constant in $(0,+\infty)$, then the function $U$ is one-homogeneous. \\
\end{prop}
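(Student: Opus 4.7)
My plan is to adapt the classical Weiss computation from the scalar one-phase problem by comparing $U$ inside $B_r$ with the $1$-homogeneous extension of its trace on $\partial B_r$, and then differentiating $\phi(r)$ to cancel the unwanted boundary terms. Concretely, for $r>0$ I would define the competitor
\[
V_r(x) = \begin{cases}\dfrac{|x|}{r}\,U\!\left(\dfrac{r\,x}{|x|}\right), & |x|\le r,\\ U(x), & |x|>r, \end{cases}
\]
which satisfies $V_r - U \in H^1_0(B_R;\R^k)$ for any $R>r$ and is therefore admissible in Definition~\ref{locmin}. A polar-coordinate calculation (using the formula $|\nabla V|^2 = g^2 + |\nabla_{S^{d-1}} g|^2$ for any $1$-homogeneous $V(x) = |x|\,g(x/|x|)$ applied componentwise) yields
\[
\int_{B_r}|\nabla V_r|^2\,dx = \frac{1}{dr}\int_{\partial B_r}|U|^2\,d\HH^{d-1} + \frac{r}{d}\int_{\partial B_r}|\nabla_\tau U|^2\,d\HH^{d-1},
\]
while $\{|V_r|>0\}\cap B_r$ is the open cone spanned by $\{|U|>0\}\cap\partial B_r$, so
\[
\bigl|\{|V_r|>0\}\cap B_r\bigr| = \frac{r}{d}\,\HH^{d-1}\bigl(\{|U|>0\}\cap\partial B_r\bigr).
\]

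\textbf{Weiss-type inequality and differentiation.} Local minimality against $V_r$ then gives the comparison
\[
\int_{B_r}|\nabla U|^2 + \Lambda\bigl|\{|U|>0\}\cap B_r\bigr| \le \frac{1}{dr}\int_{\partial B_r}|U|^2 + \frac{r}{d}\int_{\partial B_r}|\nabla_\tau U|^2 + \frac{\Lambda r}{d}\,\HH^{d-1}\bigl(\{|U|>0\}\cap\partial B_r\bigr).
\]
Setting $A(r) = \int_{B_r}|\nabla U|^2$, $B(r) = |\{|U|>0\}\cap B_r|$ and $C(r) = \int_{\partial B_r}|U|^2$, the coarea formula gives $A'(r) = \int_{\partial B_r}|\nabla U|^2$, $B'(r) = \HH^{d-1}(\{|U|>0\}\cap\partial B_r)$ and $C'(r) = 2\int_{\partial B_r}U\cdot U_\nu + \tfrac{d-1}{r}\,C(r)$. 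Differentiating $\phi = r^{-d}(A+\Lambda B) - r^{-(d+1)}C$ and using the above inequality to bound $A+\Lambda B$ from above inside the term $-d\,r^{-(d+1)}(A+\Lambda B)$, the $\int_{\partial B_r}|\nabla_\tau U|^2$ and $\Lambda B'$ contributions cancel exactly, leaving
\[
\phi'(r) \ge \frac{1}{r^d}\int_{\partial B_r}|U_\nu|^2 - \frac{2}{r^{d+1}}\int_{\partial B_r}U\cdot U_\nu + \frac{1}{r^{d+2}}\int_{\partial B_r}|U|^2 = \frac{1}{r^{d+2}}\sum_{i=1}^k\int_{\partial B_r}|x\cdot\nabla u_i - u_i|^2\,d\HH^{d-1},
\]
where the last equality follows from $r\,U_\nu = x\cdot\nabla U$ on $\partial B_r$ and completing the square.

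\textbf{Rigidity.} For the homogeneity statement, integrating the above inequality from $r$ to $R$ and applying the coarea formula gives
\[
\phi(R)-\phi(r) \ge \int_{B_R\setminus B_r}\frac{|x\cdot\nabla U - U|^2}{|x|^{d+2}}\,dx,
\]
so if $\phi$ is constant on $(0,+\infty)$ then $x\cdot\nabla U = U$ almost everywhere in $\R^d$. Restricting to almost every ray $t\mapsto t\omega$, each component satisfies the linear ODE $t\,g_i'(t) = g_i(t)$ with unique solution $g_i(t) = t\,g_i(1)$, which yields the $1$-homogeneity $U(t\omega) = t\,U(\omega)$. The only genuinely delicate step is the polar identity and cone-volume computation for $V_r$ in Step 1; the rest is algebraic manipulation, and the vectorial structure enters only through the linear decomposition $|\nabla U|^2 = \sum_i|\nabla u_i|^2$ and the fact that the equation $x\cdot\nabla U = U$ decouples across components.
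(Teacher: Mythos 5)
Your proof is correct and follows essentially the same route as the paper: the paper's Lemma~\ref{lemmadorin} constructs exactly your competitor $V_r$ (the one-homogeneous extension of the trace) to obtain the same Weiss-type comparison inequality, and the differentiation of $\phi$ with the cancellation of the tangential-gradient and measure terms is carried out verbatim in the proof of Proposition~\ref{mono_weiss}, with the quasi-minimality error constants set to zero for genuine local minimizers. The rigidity step via $x\cdot\nabla U=U$ along rays is also the paper's argument.
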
 

For the sake of simplicity in the rest of the section we will fix $x_0=0$ and $\phi(r):=\phi(U,0,r)$.

\noindent In order to prove Proposition \ref{mono_weiss} and Proposition \ref{mono_weiss_global} we need the following lemma, in which, following the ideas from \cite[Theorem 1.2]{weiss99}, we compare the function $U$ with its one-homogeneous extension in the ball $B_r$.
\begin{lemma}\label{lemmadorin}
Let $U\in H^1(\R^d;\R^k)\cap W^{1,\infty}(\R^d;\R^k)$ be a Lipschitz continuous function such that $U(0)=0$.
Suppose that $U$ is a quasi-minimizer of $\F_{\qm}$ in sense of \eqref{Uoptcond}. Then, there are constants $r_0>0$ and $C_0>0$ such that, for every $r\in(0,r_0)$, we have the estimate
\begin{equation}\label{dorinesttorin}
	\begin{split}
		\int_{B_r}|\nabla U|^2\,dx+\Lambda\big|B_r\cap \{|U|>0\}\big|
		&\le \frac{r}{d}\int_{\partial B_r}\left(|\nabla_\tau U|^2+\frac{|U|^2}{r^2}\right)\,d\HH^{d-1}\\
		&\qquad\qquad+\Lambda\frac{r}{d}\HH^{d-1}\Big(\partial B_r\cap \{|U|>0\}\Big)+C_0 r^{d+1}. 
	\end{split}
\end{equation}
\end{lemma}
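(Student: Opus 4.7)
\noindent\textbf{Proof plan for Lemma \ref{lemmadorin}.} The strategy is the classical one from Weiss~\cite{weiss99}: test the quasi-minimality of $U$ against its $1$-homogeneous extension of the trace on $\partial B_r$, and carefully track the extra $r^{d+1}$ error coming from the factor $1+K\|U-\tilde U\|_{L^1}$ in \eqref{Uoptcond}.

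\smallskip

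\noindent\emph{Step 1: the competitor.} Define
\begin{equation*}
z(x):=\begin{cases} \dfrac{|x|}{r}\,U\!\left(r\dfrac{x}{|x|}\right), & x\in B_r,\\[2pt] U(x), & x\in\R^d\setminus B_r,\end{cases}
\end{equation*}
so that $z\in H^1(\R^d;\R^k)$ and $z=U$ on $\partial B_r$. Since $U(0)=0$ and $U$ is Lipschitz with constant $L:=\|\nabla U\|_{L^\infty}$, one has $|U(x)|\le L|x|$, hence $|z|\le Lr$ on $B_r$ and in particular $\|z\|_{L^\infty}\le \max\{\|U\|_\infty,Lr\}$. Both $\|U-z\|_{L^1}\le 2L\omega_d r^{d+1}$ and $\|z\|_{L^\infty}$ are controlled, so for $r\le r_0$ small enough $z$ is an admissible competitor in \eqref{Uoptcond}.

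\smallskip

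\noindent\emph{Step 2: explicit computation of the energy and measure of $z$ on $B_r$.} Writing $x=\rho\theta$ in polar coordinates and splitting $|\nabla z|^2$ into its radial and tangential parts, a direct calculation gives
\begin{equation*}
|\nabla z(\rho\theta)|^2=\frac{|U(r\theta)|^2}{r^2}+\bigl|\nabla_\tau U(r\theta)\bigr|^2,
\end{equation*}
so that, integrating $\rho^{d-1}\,d\rho$ from $0$ to $r$,
\begin{equation*}
\int_{B_r}|\nabla z|^2\,dx=\frac{r}{d}\int_{\partial B_r}\!\left(|\nabla_\tau U|^2+\frac{|U|^2}{r^2}\right)d\HH^{d-1}.
\end{equation*}
Likewise, $\{|z|>0\}\cap B_r$ is a cone with base $\partial B_r\cap\{|U|>0\}$, so
\begin{equation*}
\bigl|\{|z|>0\}\cap B_r\bigr|=\frac{r}{d}\,\HH^{d-1}\bigl(\partial B_r\cap\{|U|>0\}\bigr).
\end{equation*}

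\smallskip

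\noindent\emph{Step 3: applying the quasi-minimality and controlling the error.} From \eqref{Uoptcond}, using $U=z$ outside $B_r$,
\begin{equation*}
\int_{B_r}|\nabla U|^2\,dx+\Lambda\bigl|B_r\cap\{|U|>0\}\bigr|\le\int_{B_r}|\nabla z|^2\,dx+\Lambda\bigl|B_r\cap\{|z|>0\}\bigr|+K\|U-z\|_{L^1}\!\int_{\R^d}|\nabla z|^2\,dx.
\end{equation*}
Since $|\nabla z|\le \sqrt{2}\,L$ pointwise on $B_r$ and $\nabla z=\nabla U$ outside $B_r$, we have $\int_{\R^d}|\nabla z|^2\le 2L^2\omega_d r^d+\|\nabla U\|_{L^2(\R^d)}^2$; combined with $\|U-z\|_{L^1}\le 2L\omega_d r^{d+1}$ this bounds the last term by $C_0 r^{d+1}$ for a constant $C_0$ depending on $L$, $K$, $d$ and $\|\nabla U\|_{L^2}$. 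Plugging the formulas of Step~2 into this inequality yields exactly \eqref{dorinesttorin}.

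\smallskip

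\noindent\emph{Main obstacle.} No single step is deep, but the care is in Step~3: one must check that the multiplicative error $K\|U-z\|_{L^1}\!\int_{\R^d}|\nabla z|^2$ is truly $O(r^{d+1})$ and not of the same order as the main terms. This is what forces the use of both the Lipschitz bound $|U|\le L|x|$ (to get $\|U-z\|_{L^1}=O(r^{d+1})$) and the pointwise bound $|\nabla z|\le CL$ (to keep $\int_{B_r}|\nabla z|^2$ bounded by $r^d$ rather than blowing up). Once this is done, the admissibility of $z$ in \eqref{Uoptcond} follows automatically by choosing $r_0$ sufficiently small.
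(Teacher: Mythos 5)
Your proposal is correct and follows essentially the same route as the paper: test the quasi-minimality against the one-homogeneous extension of the trace of $U$ on $\partial B_r$, compute its Dirichlet energy and measure exactly, and bound the multiplicative error $K\|U-\widetilde U\|_{L^1}\int|\nabla\widetilde U|^2$ by $C_0r^{d+1}$ using the Lipschitz bound $|U|\le L|x|$ and the pointwise gradient bound on the extension. The admissibility check for small $r_0$ is also exactly how the paper closes the argument.
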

\begin{proof}
Let $U=(u_1,\dots,u_k)$ be a quasi-minimizer in the sense of~\eqref{Uoptcond} with constants $K,\eps$ and we can clearly assume that $\|U\|_{L^\infty}\leq \eps^{-1}$. We consider the one homogeneous function $\widetilde U=(\widetilde u_1,\dots,\widetilde u_k):B_r\to\R^k$ defined by  $\ds \widetilde U(x):=\frac{|x|}{r}U\left(x\frac{r}{|x|}\right)$. For its components $\widetilde u_i$ we have $\ds \widetilde u_i(x):=\frac{|x|}{r}u_i\left(x\frac{r}{|x|}\right)$ and 
\begin{align*}
|\nabla \widetilde u_i|^2(x)=|\nabla_\tau u_i|^2\left(x\frac{r}{|x|}\right)+r^{-2}u_i^2\left(x\frac{r}{|x|}\right).
\end{align*}
Integrating over $B_r$ and summing for $i=1,\dots,k$ we obtain 
\begin{align*}
\int_{B_r}|\nabla \widetilde U|^2\,dx
=\sum_{i=1}^k\frac{r}{d}\int_{\partial B_r}\left(|\nabla_\tau u_i|^2+\frac{u_i^2}{r^2}\right)d\HH^{d-1}=\frac{r}{d}\int_{\partial B_r}\left(|\nabla_\tau U|^2+\frac{|U|^2}{r^2}\right)d\HH^{d-1},\end{align*}
while for the measure term we have that
\begin{align*}
\big|B_r\cap \{|\widetilde U|>0\}\big|=\frac{r}{d}\HH^{d-1}\Big(\partial B_r\cap \{|U|>0\}\Big).
\end{align*}	
Since $U\equiv \widetilde U$ on $\partial B_r$, the minimality of $U$ in $B_r$ gives 
\begin{equation*}
\begin{split}
\int_{B_r}|\nabla U|^2\,dx+\Lambda\big|B_r\cap \{|U|>0\}\big|&\le \int_{B_r}|\nabla \widetilde U|^2\,dx+\Lambda\big|B_r\cap \{|\widetilde U|>0\}\big|+\qm\|U-\widetilde U\|_{L^1}\int_{\R^d}{|\nabla \widetilde U|^2\,dx}\\
&\le \frac{r}{d}\int_{\partial B_r}\Big(|\nabla_\tau U|^2+\frac{|U|^2}{r^2}\Big)\,d\HH^{d-1}+\Lambda\frac{r}{d}\HH^{d-1}\Big(\partial B_r\cap \{|U|>0\}\Big)\\
&\qquad\qquad+\qm 2r|B_r|\|\nabla U\|_{L^\infty}\left(\int_{\R^d}{|\nabla U|^2\,dx}+2|B_r|\|\nabla U\|_{L^\infty}^2\right).
\end{split}	
\end{equation*}
It is now sufficient to choose $C_0$ and $r_0$ such that
$$2r_0^{d+1}|B_1|\|\nabla U\|_{L^\infty}\le \eps\qquad\text{and}\qquad C_0\ge 2\qm|B_1|\|\nabla U\|_{L^\infty}\left(\int_{\R^d}{|\nabla U|^2\,dx}+2|B_{r_0}|\|\nabla U\|_{L^\infty}^2\right),$$
where $\qm$ and $\eps$ are the constants from \eqref{Uoptcond}.
\end{proof}

We are now in position to prove the desired monotonicity formula for the function $\phi$.
\begin{proof}[\bf Proof of Proposition \ref{mono_weiss}] Let $r_0$ and $C_0$ be the constants from Lemma \ref{lemmadorin} and let $C_1=dC_0$. Calculating the derivative $\phi'(r)$ and using~\eqref{dorinesttorin} from Lemma~\ref{lemmadorin}, we have 
\begin{align*} 
\phi'(r)&=\frac1{r^{d}}\left(\int_{\partial B_r}|\nabla U|^2\,d\HH^{d-1}+\Lambda\HH^{d-1}(\{|U|>0\}\cap \partial B_r)\right)-\frac{d}{r^{d+1}}\left(\int_{B_r}|\nabla U|^2\,dx+\Lambda|\{|U|>0\}\cap B_r|\right)\\
&\quad +\frac{2}{r^{d+2}}\int_{\partial B_r}|U|^2\,d\HH^{d-1}-\frac{1}{r^{d+1}}\sum_{i=1}^k\int_{\partial B_r}2u_i\frac{\partial u_i}{\partial \nu}\,d\HH^{d-1}\\
&\ge \frac1{r^{d}}\left(\int_{\partial B_r}|\nabla U|^2\,d\HH^{d-1}+\Lambda\HH^{d-1}(\{|U|>0\}\cap \partial B_r)\right)\\
&\quad -\frac{d}{r^{d+1}}\left(\frac{r}{d}\int_{\partial B_r}\left(|\nabla_\tau U|^2+\frac{|U|^2}{r^2}\right)d\HH^{d-1}+\frac{r}{d}\Lambda\HH^{d-1}(\{|U|>0\}\cap \partial B_r)+C_0 r^{d+1}\right)\\
&\quad+\frac{2}{r^{d+2}}\int_{\partial B_r}|U|^2\,d\HH^{d-1}-\frac{1}{r^{d+1}}\sum_{i=1}^k\int_{\partial B_r}2u_i\frac{\partial u_i}{\partial \nu}\,d\HH^{d-1}\\
&=\frac{1}{r^d}\sum_{i=1}^k\int_{\partial B_r}\left|\frac{\partial u_i}{\partial \nu}\right|^2\,d\HH^{d-1}+\frac{1}{r^{d+2}}\int_{\partial B_r}|U|^2\,d\HH^{d-1}-\frac{1}{r^{d+1}}\sum_{i=1}^k\int_{\partial B_r}2u_i\frac{\partial u_i}{\partial \nu}\,d\HH^{d-1}-C_1\\
&=\frac{1}{r^{d+2}}\sum_{i=1}^k\int_{\partial B_r} \left(r^2\left|\frac{\partial u_i}{\partial \nu}\right|^2+u_i^2-2ru_i\frac{\partial u_i}{\partial \nu}\right)d\HH^{d-1}-C_1= \frac{1}{r^{d+2}}\int_{\partial B_r}{|x\cdot \nabla U-U|^2d\HH^{d-1}}-C_1,
\end{align*}
which concludes the proof of the first part of Proposition \ref{mono_weiss}. In particular, we obtain that the function $r\mapsto \phi(r)+C_1r$ is non-decreasing. Thus the limit $\ds\lim_{r\to0+}(\phi(r)+C_1r)=\lim_{r\to0+}\phi(r)$ does exist and is necessarily a real number or $-\infty$. In order to exclude this last possibility, we notice that  due to the Lipschitz continuity of $U$ and the fact that $U(0)=0$, we have that
$$\phi(r)\ge -\frac1{r^{d+1}}\int_{\partial B_r}|U|^2\,d\HH^{d-1}\ge -d\omega_d\|\nabla U\|_{L^\infty}^2,\quad\text{for every}\quad r>0,$$
which finally proves that $\ds\lim_{r\to0+}\phi(r)$ is finite.
\end{proof}

\begin{proof}[\bf Proof of Proposition \ref{mono_weiss_global}] We notice that if $U$ is a local minimizer of the functional $\mathcal F_0$, then both the constants $C_0$ and $C_1$ defined above can be taken equal to zero. The last claim of the proposition follows by the fact that if $\phi'\equiv 0$, then $x\cdot U\equiv U$ in $\R^d$, which proves that $U$ is $1$-homogeneous.
\end{proof}

\section{Blow-up sequences and blow-up limits}\label{sec:blowup}

Let $U:\R^d\to\R^k$ be a given Lipschitz function. For $r>0$ and $x\in\R^d$ such that $U(x)=0$, we define 
$$\ds U_{r,x}(y):=\frac1r U(x+ry).$$
When $x=0$ we will use the notation $U_r:=U_{r,0}$.

Suppose now that $(r_n)_{n\ge0}\subset\R^+$ and $(x_n)_{n\ge0}\subset\R^d$ are two sequences such that
\begin{equation}\label{rnxn}
\lim_{n\to\infty}r_n=0,\qquad \lim_{n\to\infty}x_n=x_0,\qquad x_n\in\partial\{|U|>0\}\quad\text{for every}\quad n\ge0.
\end{equation}
Then the sequence $\{U_{r_n,x_n}\}_{n\in\N}$ is uniformly Lipschitz and locally uniformly bounded in $\R^d$. Thus, up to a subsequence, $U_{r_n,x_n}$ converges locally uniformly in $\R^d$ as $n\to\infty$. 

\begin{deff}
Let $U:\R^n\to\R^k$ be a Lipschitz function, $r_n$ and $x_n$ be two sequences satisfying \eqref{rnxn}. 
\begin{itemize}
\item We say that the sequence $U_{r_n,x_n}$ is a blow-up sequence with variable center (this is sometimes called pseudo-blow-up). 
\item If the sequence $x_n$ is constant, i.e. $x_n=x_0$ for every $n\ge 0$, we say that the sequence $U_{r_n,x_0}$ is a blow-up sequence with fixed center. 
\item We denote by $\mathcal{BU}_U(x_0)$ the space of all the limits of blow-up sequences with fixed center $x_0$.
\end{itemize}
\end{deff}

The main result of this section is the following :
\begin{prop}[Structure of the blow-up limits]\label{blowup}
Let $\Omega$ be optimal for \eqref{introP} and let $U=(u_1,\dots,u_k)$ be the vector of the first $k$ normalized eigenfunctions on $\Omega$. For every $x_0\in \partial\Omega$ and $U_0\in\mathcal{BU}_U(x_0)$ there is a unit vector $\xi\in\partial B_1\subset\R^k$ such that $U_0=\xi|U_0|$. Moreover the (real-valued) function $|U_0|$ is not identically zero and satisfies the following properties:
\begin{enumerate}
\item $|U_0|$ is $1$-homogeneous ;
\item $|U_0|$ is a local minimizer (in the sense of Definition \ref{locmin}) of the Alt-Caffarelli functional 
\begin{equation*}
H^1_{loc}(\R^d;\R)\ni u\mapsto\int|\nabla u|^2\,dx+\Lambda|\{u>0\}|.
\end{equation*}
\end{enumerate}
\end{prop}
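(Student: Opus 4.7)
The plan is to analyze a blow-up sequence $U_n:=U_{r_n,x_0}\to U_0$ at a free boundary point $x_0\in\partial\Omega$ by combining the quasi-minimality from Proposition \ref{quasimin_brno}, the non-degeneracy and density estimates of Section \ref{sec:nondeg}, and the Weiss monotonicity formula of Section \ref{ssmono}. The uniform Lipschitz bound of Proposition \ref{prop:nondeg} gives compactness of $\{U_n\}$ in $C^0_{loc}(\R^d;\R^k)$, so a limit $U_0$ exists along a subsequence and is Lipschitz.

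\textbf{Step 1: $U_0$ is a non-trivial local minimizer of $\F_0$.} Non-triviality is immediate from Lemma \ref{nondegcaflemma}: the estimate $\|U\|_{L^\infty(B_{2r_n}(x_0))}\ge c_0 r_n$ rescales to $\|U_n\|_{L^\infty(B_2)}\ge c_0$, which survives the uniform limit. For minimality, I observe that $U_n$ inherits from $U$ a quasi-minimality of the type \eqref{Uoptcond} with constant $K_n$ that vanishes as $r_n\to 0$, since the $L^1$ distance and the Dirichlet energy scale by different powers of $r_n$. Comparing with an arbitrary competitor and using $U_0$ itself as a test function to upgrade the $C^0$ convergence of $U_n$ to strong $H^1_{loc}$ convergence, one concludes that $U_0$ satisfies Definition \ref{locmin} with constant $\Lambda$.

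\textbf{Step 2: $U_0$ is $1$-homogeneous.} By Proposition \ref{mono_weiss} the finite limit $\ell:=\lim_{s\to 0^+}\phi(U,x_0,s)\in\R$ exists. Combining the scaling identity $\phi(U_{r,x_0},0,\rho)=\phi(U,x_0,r\rho)$ with the $H^1_{loc}$ convergence and the density estimate of Lemma \ref{densestlemma}, for every $\rho>0$ one obtains $\phi(U_0,0,\rho)=\lim_n\phi(U_n,0,\rho)=\lim_n\phi(U,x_0,r_n\rho)=\ell$. Hence $\phi(U_0,0,\cdot)\equiv\ell$, and the rigidity in Proposition \ref{mono_weiss_global} forces $U_0$ to be $1$-homogeneous.

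\textbf{Step 3: the decomposition $U_0=\xi|U_0|$ and scalar minimality.} Testing the $\F_0$-minimality of $U_0$ against compactly supported vectorial perturbations inside $\omega^*:=\{|U_0|>0\}$ leaves the measure term unchanged and yields $\Delta u_i^0=0$ in $\omega^*$ for every $i$. The inequality $|U|\le C u_1$ from Lemma \ref{nondegu1} rescales verbatim and passes to the limit, giving $u_1^0\ge C^{-1}|U_0|>0$ on $\omega^*$. Coupling harmonicity with $1$-homogeneity, writing $u_i^0(r\theta)=r\psi_i(\theta)$, each spherical trace solves $-\Delta_{S^{d-1}}\psi_i=(d-1)\psi_i$ on $\omega:=\omega^*\cap S^{d-1}$ with zero Dirichlet data on $\partial\omega$ (inherited from the continuity of $U_0$ and $|U_0|=0$ on $\partial\omega^*$). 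Because $\psi_1>0$, it is the principal Dirichlet eigenfunction on $\omega$ and the associated eigenspace is one-dimensional; therefore $\psi_i=c_i\psi_1$ for suitable $c_i\in\R$, whence $U_0=c\,u_1^0$ with $c=(1,c_2,\dots,c_k)$, i.e.\ $U_0=\xi|U_0|$ with $\xi:=c/|c|$. The scalar Alt--Caffarelli minimality of $|U_0|$ then follows by testing the vectorial minimality against $\tilde V=\xi v$ for any non-negative scalar competitor $v$: the identities $|\tilde V|=v$ and $|\nabla\tilde V|^2=|\nabla v|^2$ collapse the vectorial inequality to the scalar one.

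The main obstacle lies in Step 3: the non-degeneracy of the first eigenfunction in Lemma \ref{nondegu1}, which is where the sign assumption on $u_1$ is genuinely used, is essential to guarantee $\psi_1>0$. This is what lets us identify $\psi_1$ as the simple principal Dirichlet eigenfunction on $\omega$ and forces every other $\psi_i$ to be a scalar multiple of it, aligning the whole vector $U_0$ along a single direction $\xi$ and reducing the problem to the classical scalar setting of \cite{altcaf,weiss99}. Without such an ingredient the blow-up could, a priori, be a genuinely vector-valued $1$-homogeneous object.
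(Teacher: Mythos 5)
Your Steps 1 and 2 reproduce the paper's argument faithfully: the strong $H^1_{loc}$ upgrade via a cut-off interpolation with $U_0$ is Proposition~\ref{blowupexistence}, the vanishing of the quasi-minimality constant under rescaling is Lemma~\ref{scalingopt} feeding into Lemma~\ref{Uinftymin}, and the homogeneity via the constancy of the Weiss functional along the blow-up is Lemma~\ref{Uinftyhom} combined with Proposition~\ref{mono_weiss_global}. Step 3 is where you genuinely diverge, and where there is a gap.

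You deduce the alignment $U_0=\xi|U_0|$ from the implication ``$\psi_1>0$ on $\omega$, hence $\psi_1$ is the principal eigenfunction and the eigenspace of $d-1$ is one-dimensional.'' That implication is false for a general (possibly disconnected) spherical domain $\omega$: if $\omega$ has two connected components on each of which $d-1$ is the first eigenvalue, then $\omega$ carries a positive eigenfunction with eigenvalue $d-1$ while the corresponding eigenspace is two-dimensional, and the remaining $\psi_i$ could then take independent values on the two components, destroying the alignment. Nothing in your argument rules out disconnectedness of $\omega=\{|U_0|>0\}\cap\partial B_1$ a priori. The paper avoids the sign of $u_1$ entirely at this stage (Lemma~\ref{Uinftyhom2}): it uses that $d-1$ is an eigenvalue of $\Delta_S$ on $S=\omega$, that $\HH^{d-1}(S)<d\omega_d$ (from the upper density estimate for the minimizer $U_0$), and the spherical eigenvalue inequalities \eqref{sphericalFK}--\eqref{lambda2S} of Remark~\ref{remsphere}, which force $\lambda_2(S)>d-1$ and hence $d-1=\lambda_1(S)$ with a one-dimensional eigenspace -- disconnected or not. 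Your approach can be repaired by exactly this input: two disjoint components each with first eigenvalue $d-1$ would each have to be a half-sphere or have measure exceeding $d\omega_d/2$, which is incompatible with the exterior density estimate for $\{|U_0|>0\}$ at the origin. As written, though, the key simplicity claim is unjustified; and note that the paper's route shows the positivity of $u_1$ (Lemma~\ref{nondegu1}) is \emph{not} needed for Proposition~\ref{blowup} itself -- it enters later, in the boundary Harnack/viscosity arguments of Section~\ref{sec:regularity}. The final reduction to the scalar functional by testing with $\xi v$ is the same as Lemma~\ref{Uinftyopt2} and is fine.
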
 

The rest of the section is dedicated to the proof of Proposition \ref{blowup}. In Proposition \ref{blowupexistence} we prove that the blow-up sequences (of fixed or variable center) converge strongly in $H^1_{loc}$ and the corresponding free boundaries converge in the Hausdorff distance. In Lemma \ref{Uinftymin} we prove that the vector-valued function $U_0$ is a local minimizer (in the sense of Definition \ref{locmin}) of the functional 
\begin{equation*}
H^1_{loc}(\R^d;\R^k)\ni U\mapsto\int|\nabla U|^2\,dx+\Lambda|\{|U|>0\}|.
\end{equation*}
We apply then the Weiss monotonicity formula (Proposition \ref{mono_weiss} and Proposition \ref{mono_weiss_global}) to obtain the $1$-homogeneity of $U_0$, that we use to prove the existence of the vector $\xi$ in Lemma \ref{Uinftyhom2}. This result together with the optimality of $U_0$ gives the optimality of $|U_0|$, which is finally proved in Lemma \ref{Uinftyopt2}. \\

As a consequence of Proposition \ref{blowup} we get the following result. 

\begin{cor}\label{mainconnected}
Every optimal set for \eqref{introP} is connected.
\end{cor}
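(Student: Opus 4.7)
The plan is to argue by contradiction, exploiting the translation invariance of the spectrum on disjoint components together with the rigidity of blow-up limits provided by Proposition~\ref{blowup}. Suppose $\Omega=\omega_1\sqcup\omega_2$ is a disconnected optimal set with $\mathrm{dist}(\omega_1,\omega_2)=d_0>0$. The first step is a scaling reduction via~\eqref{homoeigen}: if only $\omega_1$ (say) contributed all the first $k$ eigenvalues of $\Omega$, then rescaling $\omega_1$ to unit volume would yield $\sum_{i=1}^k\lambda_i(t\omega_1)=|\omega_1|^{2/d}\sum_{i=1}^k\lambda_i(\omega_1)<\sum_{i=1}^k\lambda_i(\Omega)$, contradicting optimality. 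Hence $\{1,\dots,k\}$ splits as $S_1\sqcup S_2$ with both pieces non-empty, according to the component in which each eigenfunction is supported.

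Next, I would translate: for every $a$ with $\omega_1\cap(\omega_2+a)=\emptyset$, the set $\Omega_a:=\omega_1\cup(\omega_2+a)$ has the same spectrum (the Dirichlet Laplacian decouples on disjoint opens) and the same volume as $\Omega$, hence is again optimal. Choosing $a^*$ at which the two components first touch, I obtain an optimal configuration $\Omega^*$ with $x_0\in\partial\omega_1\cap\partial(\omega_2+a^*)$ while the interiors remain disjoint. Applying Proposition~\ref{blowup} to the eigenfunction vector $\tilde U$ of $\Omega^*$ at $x_0$, any blow-up limit has the form $U_0=\xi\,|U_0|$ for some unit $\xi\in\R^k$ with $|U_0|\not\equiv 0$. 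By construction, $\tilde u_{0,i}$ is supported in the blow-up cone $C_1$ of $\omega_1$ at $x_0$ for $i\in S_1$, and in the disjoint cone $C_2$ of $\omega_2+a^*$ at $x_0$ for $i\in S_2$.

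The desired contradiction then reads: evaluating $U_0=\xi|U_0|$ at a point of $C_2$ where $|U_0|>0$, together with $\tilde u_{0,i}\equiv 0$ on $C_2$ for $i\in S_1$, forces $\xi_i=0$ for all $i\in S_1$; by the symmetric argument $\xi_i=0$ for all $i\in S_2$; hence $\xi=0$ against $|\xi|=1$. The delicate step I anticipate as the main obstacle is the \emph{component-wise} non-degeneracy, namely that $|U_0|\not\equiv 0$ on each of $C_1$ and $C_2$ separately. To handle this I would invoke Proposition~\ref{prop:nondeg}(2) — whose proof via Lemma~\ref{nondegcaflemma} requires only the quasi-minimality of $U$ and never connectedness — in the \emph{original} (pre-translation) configuration $\Omega$, where $\omega_2$ still sits at distance $d_0$ from $x_0\in\partial\omega_1$. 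For $r<d_0/2$ the ball $B_r(x_0)$ is disjoint from $\omega_2$, so $|U|$ coincides there with $|U_{\omega_1}|:=|(u_i)_{i\in S_1}|$, giving $\mean{B_r(x_0)}|U_{\omega_1}|\,dx\ge c_0 r$. Since $|U_{\omega_1}|$ depends only on $\omega_1$, this bound persists in $\Omega^*$ on the $\omega_1$-side of $x_0$ and passes to the blow-up as $|U_0|>0$ on a positive-measure portion of $C_1$; the symmetric estimate at the preimage $x_0-a^*\in\partial\omega_2$ yields $|U_0|\not\equiv 0$ on $C_2$, closing the argument. Performing the non-degeneracy after the translation would only produce a joint bound on $|\tilde U|$ that does not separate the two component contributions, which is why the estimate must be carried out before bringing the pieces together.
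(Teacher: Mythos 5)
Your proof is correct. It shares the paper's skeleton --- argue by contradiction, split the spectrum between the two components, translate them until they are tangent at a point, and blow up there --- but the contradiction is reached by a genuinely different mechanism. The paper applies Proposition~\ref{blowup} separately to the eigenfunction vectors of the two components (each being optimal for its own sub-problem, with the contact point on its boundary): the two blow-ups are nonzero one-homogeneous minimizers of the scalar Alt--Caffarelli functional living in disjoint cones, which forces the cones to be complementary half-spaces; the blow-up of the full vector then has positivity set of full measure, so its modulus is a nonnegative, nonzero harmonic function vanishing at the origin, against the strong maximum principle. You instead use the colinearity $U_0=\xi|U_0|$ of the single combined blow-up (Lemma~\ref{Uinftyhom2}): since the $S_1$- and $S_2$-components survive in disjoint cones and neither group vanishes identically, evaluating $u_{0,i}=\xi_i|U_0|$ on each cone annihilates every coordinate of $\xi$, contradicting $|\xi|=1$. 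What each approach buys: the paper gets the nontriviality of both pieces for free by citing Proposition~\ref{blowup} for each component on its own optimal set, at the price of the extra half-space classification; your route needs only one application of the blow-up machinery but must supply the component-wise non-degeneracy by hand, which you do correctly by running Proposition~\ref{prop:nondeg}(2) in the pre-translation configuration --- where each component sits alone in a small ball around the future contact point --- and observing that the resulting lower bound is local, hence survives the translation and passes to the blow-up. Your preliminary scaling argument showing that both index sets are nonempty also makes explicit a step the paper leaves implicit.
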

\begin{proof}
Let $\Omega$ be an optimal set for the problem~\eqref{introP}. Suppose that $\Om$ is a union of two disjoint open sets $\Om_1$ and $\Om_2$. Then the spectrum of $\Omega$ is given by $\sigma(\Omega)=\sigma(\Omega_1)\cup\sigma(\Omega_2)$ and in particular there is some $l\in{1,\dots,k-1}$ such that 
$$\{\lambda_1(\Omega),\dots,\lambda_k(\Omega)\}=\{\lambda_1(\Omega_1),\dots,\lambda_l(\Omega_1)\}\cup\{\lambda_1(\Omega_2),\dots,\lambda_{k-l}(\Omega_2)\}.$$
Now since $\Omega$ is optimal for the sum $\lambda_1+\dots+\lambda_k$, we have that $\Omega_1$ has to be optimal for $\lambda_1+\dots+\lambda_l$ and $\Omega_2$ for $\lambda_1+\dots+\lambda_{k-l}$. Let $\widetilde\Omega_1$ and $\widetilde\Omega_2$ be translations of $\Omega_1$ and $\Omega_2$ such that $\widetilde \Omega_1$ and $\widetilde\Omega_2$ are disjoint and tangent in $0\in\partial\widetilde\Omega_1\cap\partial\widetilde\Omega_2$. Setting $\widetilde \Omega=\widetilde\Omega_1\cup\widetilde\Omega_2$ we have that $\widetilde \Omega$ and the original set $\Omega$ have the same spectrum and the same measure. Thus $\widetilde\Omega$ is a solution of \eqref{introP}. Let $(u_1,\dots,u_l)$ and $(v_1,\dots,v_{k-l})$ be the vectors of the first eigenfunctions on $\widetilde\Omega_1$ and $\widetilde\Omega_2$. Let $U_0$ and $V_0$ be two limits of the blow-up sequences of these two vectors in zero. By the optimality and the homogeneity of $|U_0|$ and $|V_0|$, together with the fact that they are non-zero (see Proposition \ref{blowup}) we have that necessarily $\{|U_0|>0\}$ and $\{|V_0|>0\}$ are two complementary half-spaces. On the other hand there is a blow-up limit $W_0\in \mathcal{BU}_U(0)$ whose components are precisely the ones of $U_0$ and $V_0$. Now, by the optimality of $|W_0|$, it has to be a non-negative non-zero harmonic function on $B_1$ vanishing in zero, in contradiction with the maximum principle, so $\Omega$ is disconnected. 
\end{proof}

The proof  of Proposition \ref{blowup} is based on the fact that if $U$ is the vector eigenfunctions on the optimal set for $\lambda_1+\dots+\lambda_k$, then $U_{r,x_0}$ satisfies a quasi-minimality condition of the form \eqref{Uoptcond}. This is a direct consequence from the scaling properties of the functional $\mathcal F_\qm$ defined in Section \ref{sec:eigen}. Since it is essential for the proof of Proposition \ref{blowup}, we show it in the following Lemma. 
\begin{lemma}\label{scalingopt}
Suppose that $U\in H^1(\R^d;\R^k)\cap L^{\infty}(\R^d;\R^k)$ and that there are constants $\qm>0$ and $\eps>0$ such that $U$ satisfies the quasi-minimality condition~\ref{Uoptcond}.
Then, for every $x_0\in\R^d$, $U_{r,x_0}$ satisfies 
\begin{equation*}
\begin{split}
\int_{\R^d}|\nabla U_{r,x_0}|^2\,dx+\Lambda\big|\{|U_{r,x_0}|>0\}\big|\le \Big(1+\qm r^{d+1}\|U_{r,x_0}-\widetilde U\|_{L^1}\Big)\int_{\R^d}|\nabla \widetilde U|^2\,dx+\Lambda\big| \{|\widetilde U|>0\}\big|,\\
\quad\text{for every}\quad \widetilde U\in H^1(\R^d;\R^k)\quad\text{such that}\quad \|\widetilde U\|_{L^\infty}\le \frac{1}{\eps r}\quad\text{and}\quad \|U-\widetilde U\|_{L^1}\le \frac{\eps}{r^{d+1}}. 
\end{split}
\end{equation*}
\end{lemma}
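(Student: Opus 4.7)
The plan is to reduce the quasi\mbox{-}minimality of $U_{r,x_0}$ to the quasi\mbox{-}minimality of $U$ via a scaling argument. Given a competitor $\widetilde U\in H^1(\R^d;\R^k)$ for $U_{r,x_0}$ satisfying the two hypotheses, I would introduce the inverse rescaling
$$W(x):=r\,\widetilde U\!\left(\frac{x-x_0}{r}\right),$$
and show that $W$ is an admissible competitor for $U$ in~\eqref{Uoptcond}. The condition $\|\widetilde U\|_{L^\infty}\le (\eps r)^{-1}$ translates immediately into $\|W\|_{L^\infty}=r\|\widetilde U\|_{L^\infty}\le \eps^{-1}$, and the change of variable $x=x_0+ry$ yields
$$\|U-W\|_{L^1}=\int_{\R^d}\Big|U(x)-r\widetilde U\!\left(\tfrac{x-x_0}{r}\right)\Big|\,dx=r^{d+1}\|U_{r,x_0}-\widetilde U\|_{L^1},$$
so the hypothesis $\|U_{r,x_0}-\widetilde U\|_{L^1}\le \eps/r^{d+1}$ forces $\|U-W\|_{L^1}\le\eps$. (I am reading the second hypothesis as $\|U_{r,x_0}-\widetilde U\|_{L^1}\le\eps/r^{d+1}$, which seems to be the intended statement.)

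Next I would track the scaling of each piece of the functional. Since $\nabla_y U_{r,x_0}(y)=\nabla U(x_0+ry)$, the change of variable $x=x_0+ry$ gives
$$\int_{\R^d}|\nabla U|^2\,dx=r^d\int_{\R^d}|\nabla U_{r,x_0}|^2\,dy,\qquad |\{|U|>0\}|=r^d\,|\{|U_{r,x_0}|>0\}|,$$
and the same identities relate $W$ to $\widetilde U$, because $\nabla W(x)=\nabla\widetilde U((x-x_0)/r)$ with the $r$ from the definition of $W$ cancelling the $1/r$ from the chain rule. Consequently
$$\int_{\R^d}|\nabla W|^2\,dx=r^d\int_{\R^d}|\nabla\widetilde U|^2\,dy,\qquad |\{|W|>0\}|=r^d\,|\{|\widetilde U|>0\}|.$$

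Finally I would apply~\eqref{Uoptcond} to $U$ with the competitor $W$, substitute the four rescaling identities above, replace $\|U-W\|_{L^1}$ by $r^{d+1}\|U_{r,x_0}-\widetilde U\|_{L^1}$ inside the factor $(1+K\|U-W\|_{L^1})$, and divide through by $r^d$. The result is exactly the stated inequality. There is no real obstacle; the entire content of the lemma is a careful bookkeeping of the scaling exponents, and the step worth a brief check is that the admissibility ceilings ($L^\infty$ bound and $L^1$ closeness) scale at different rates in $r$, which is precisely why the constants $(\eps r)^{-1}$ and $\eps/r^{d+1}$ appear.
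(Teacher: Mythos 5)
Your argument is correct and coincides with the paper's own proof: the competitor $W$ you introduce is exactly the rescaled function $\tilde U^r(x)=r\widetilde U(x/r)$ used in the paper, and the verification of admissibility plus the scaling identities for the Dirichlet energy and the measure term are the same computation. Your reading of the second hypothesis as $\|U_{r,x_0}-\widetilde U\|_{L^1}\le \eps/r^{d+1}$ is also the intended one (the paper's proof uses precisely that form).
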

\begin{proof}
Assume for simplicity that $x_0=0$. Let $\widetilde U\in H^1(\R^d;\R^k)\cap L^\infty(\R^d;\R^k)$ be such that 
$$\|U_{r}-\widetilde U\|_{L^1}\le \frac{\eps}{r^{d+1}}\qquad\text{and}\qquad \|\widetilde U\|_{L^\infty}\le \frac{1}{\eps r},$$
and consider the functions $\ds\Phi=U_{r}-\widetilde U$, $\Phi^r(x):=r\Phi\left(\frac{x}r\right)$ and $\tilde U^r(x):=r\tilde U\left(\frac{x}r\right)$. We notice that $$\|\Phi^r\|_{L^1}=r^{d+1}\|\Phi\|_{L^1}\le \eps\qquad\text{and}\qquad \|\tilde U^r\|_{L^\infty}=r\|\tilde U\|_{L^\infty}\le \frac1\eps,$$
and so we may use $\tilde U^r=U+\Phi^r$ to test the optimality of $U$:
\begin{align*}
\int_{\R^d}|\nabla U_{r}|^2\,dx+\Lambda\big|\{|U_{r}|>0\}\big|&=\frac1{r^d}\int_{\R^d}|\nabla U|^2\,dx+\frac\Lambda{r^d}\big|\{|U|>0\}\big|\\
&\le \big(1+\qm \|\Phi^r\|_{L^1}\big)\frac1{r^d}\int_{\R^d}|\nabla \tilde U^r|^2\,dx+\frac\Lambda{r^d}\big|\{|\tilde U^r|>0\}\big|\\
&=\Big(1+\qm r^{d+1}\|\Phi\|_{L^1}\Big)\int_{\R^d}|\nabla \widetilde U|^2\,dx+\Lambda\big| \{|\widetilde U|>0\}\big|,
\end{align*}
which gives the claim.
\end{proof}

\begin{prop}[Convergence of the blow-up sequences]\label{blowupexistence}
Let $U$ be a Lipschitz continuous local minimizer of $\mathcal F_\qm$ in the open set $\Dr\subset\R^d$. Suppose that $(r_n)_{n\in\N}$ and $(x_n)_{n\in\N}\subset\partial\{|U|>0\}$ are two sequences such that, for some $x_0\in\partial\{|U|>0\}$ and $U_0:\R^d\to\R^k$ Lipschitz continuous, we have
$$\lim_{n\to\infty}r_n=0\ ,\quad\lim_{n\to\infty}x_n=x_0\quad\text{and}\quad\lim_{n\to\infty}U_{r_n,x_n}=U_0,$$
where the convergence of $U_{r_n,x_n}$ is to be intended locally uniform in $\R^d$. Then, for every $R>0$, the following properties hold: 
\begin{enumerate}[(a)]
\item The sequence $\ds U_{r_n,x_n}(x):=\frac1{r_n}U(x_n+r_n x)$ converges to $U_0$ strongly in $H^1(B_R;\R^k)$. 
\item The sequence of characteristic functions $\ind_{\Omega_n}$ converges in $L^1(B_R)$ to $\ind_{\Omega_0}$, where 
$$\Omega_n:=\{|U_{r_n}|>0\}\quad\text{and}\quad \Omega_0:=\{|U_0|>0\}.$$
  
\item The sequences of closed sets $\overline\Omega_n$ and $\Omega_n^c$ converge Hausdorff in $B_R$ respectively to $\overline\Omega_0$ and $\Omega_0^c$.
\item $U_0$ is non-degenerate at zero, that is, there is a dimensional constant $c_d>0$ such that 
$$\|U_0\|_{L^\infty(B_r)}\ge c_d\,r\quad\text{for every}\quad r>0.$$

\end{enumerate}
\end{prop}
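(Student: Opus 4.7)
The plan is to exploit the rescaled quasi-minimality of $U_{r_n,x_n}$ provided by Lemma \ref{scalingopt}, whose defect constant $\qm r_n^{d+1}$ vanishes as $n\to\infty$. Since $U$ is Lipschitz and $U(x_n)=0$, the sequence $U_{r_n,x_n}$ is uniformly Lipschitz on every compact set, so $U_0$ is Lipschitz too and $U_{r_n,x_n}\rightharpoonup U_0$ weakly in $H^1(B_R;\R^k)$ along the full sequence. The core task is to upgrade this weak convergence to strong in (a) by testing with a cut-off competitor; items (b)--(d) will then fall out of (a) together with the non-degeneracy and density estimates of Section~\ref{sec:nondeg}. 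The hardest part will be the cut-off construction in (a) and the case $y\in\partial\Omega_0$ of the Hausdorff statement for the complement in (c).

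For (a), fix $R'>R$, pick $\eta\in C_c^\infty(B_{R'})$ with $\eta\equiv 1$ on $B_R$ and $0\le\eta\le 1$, and set
$$V_n:=\eta\,U_0+(1-\eta)\,U_{r_n,x_n},$$
which coincides with $U_0$ on $B_R$, equals $U_{r_n,x_n}$ outside $B_{R'}$, and satisfies $\|V_n-U_{r_n,x_n}\|_{L^1}\to 0$ by local uniform convergence. The $L^\infty$-constraint $\|\tilde U\|_{L^\infty}\le 1/(\eps r_n)$ of Lemma \ref{scalingopt} becomes vacuous as $r_n\to 0$, so $V_n$ is admissible for $n$ large and the quasi-minimality of $U_{r_n,x_n}$ yields
$$\int_{B_{R'}}|\nabla U_{r_n,x_n}|^2\,dx+\Lambda|\Omega_n\cap B_{R'}|\le(1+o(1))\int_{B_{R'}}|\nabla V_n|^2\,dx+\Lambda|\{|V_n|>0\}\cap B_{R'}|.$$
On $B_R$ the right-hand side reduces to $\int_{B_R}|\nabla U_0|^2\,dx+\Lambda|\Omega_0\cap B_R|$, while on the annulus $B_{R'}\setminus B_R$ the uniform Lipschitz bound controls both contributions by a constant multiple of $|B_{R'}\setminus B_R|$. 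Discarding the positive terms on the left, taking $\limsup$ in $n$ and then $R'\to R$, we obtain $\limsup_n\|\nabla U_{r_n,x_n}\|_{L^2(B_R)}^2\le\|\nabla U_0\|_{L^2(B_R)}^2$; combined with weak lower semicontinuity this gives norm convergence and hence strong $H^1$ convergence on $B_R$.

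Items (b), (c), (d) then follow. For (b), plugging the strong convergence of gradients back into the quasi-minimality inequality and letting $R'\to R$ gives $\limsup_n|\Omega_n\cap B_R|\le|\Omega_0\cap B_R|$; the reverse inequality is immediate from the pointwise inclusion $\Omega_0\subset\liminf_n\Omega_n$ (from uniform convergence), and dominated convergence then forces $\|\ind_{\Omega_n}-\ind_{\Omega_0}\|_{L^1(B_R)}\to 0$. For (c), the non-degeneracy radius and constant of Lemma \ref{nondegcaflemma} for $U_{r_n,x_n}$ scale as $r_0/r_n\to\infty$ and $c_0$ respectively, so uniform convergence gives $\limsup_n\overline\Omega_n\subset\overline\Omega_0$, while $\overline\Omega_0\subset\liminf_n\overline\Omega_n$ follows by approximating $\overline\Omega_0$-points by $\Omega_0$-points and lifting. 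The analogous statement for $\Omega_n^c$ is straightforward for $y$ in the interior of $\Omega_0^c$; for $y\in\partial\Omega_0$, if $B_\delta(y)\subset\Omega_n$ along a subsequence, then Harnack's inequality for the almost-harmonic non-negative function $u_{1,n}$ (satisfying $-\Delta u_{1,n}=r_n^2\lambda_1 u_{1,n}$), together with the comparison $Cu_{1,n}\ge|U_{r_n,x_n}|$ of Lemma \ref{nondegu1}, forces $|U_0|\equiv 0$ on $B_{\delta/2}(y)$, contradicting $y\in\partial\Omega_0$. Finally, (d) follows by applying the non-degeneracy of $U$ at the boundary point $x_n$ with $s=r_nR/2$, which gives $\|U_{r_n,x_n}\|_{L^\infty(B_R)}\ge c_0R/2$ uniformly in $n$ large, a bound preserved under the uniform limit.
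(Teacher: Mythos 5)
Your overall strategy coincides with the paper's: a cut-off competitor $V_n=\eta U_0+(1-\eta)U_{r_n,x_n}$ tested in the rescaled quasi-minimality of Lemma~\ref{scalingopt} (whose defect vanishes), followed by lower semicontinuity; parts (c) and (d) then come from non-degeneracy and the density estimates. However, the key step of part (a) is wrong as written. After ``discarding the positive terms on the left'' you are left with
$$\int_{B_R}|\nabla U_{r_n,x_n}|^2\,dx\le (1+o(1))\int_{B_{R'}}|\nabla V_n|^2\,dx+\Lambda\big|\{|V_n|>0\}\cap B_{R'}\big|,$$
and the right-hand side contains the term $\Lambda|\Omega_0\cap B_R|$ (since $V_n=U_0$ on $B_R$), which does \emph{not} disappear when $R'\to R$. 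So you only get $\limsup_n\|\nabla U_{r_n,x_n}\|_{L^2(B_R)}^2\le\|\nabla U_0\|_{L^2(B_R)}^2+\Lambda|\Omega_0\cap B_R|$, not the claimed inequality. The repair is to keep the measure term on the left and prove the joint inequality
$$\limsup_n\Big(\int_{B_R}|\nabla U_{r_n,x_n}|^2\,dx+\Lambda|\Omega_n\cap B_R|\Big)\le\int_{B_R}|\nabla U_0|^2\,dx+\Lambda|\Omega_0\cap B_R|,$$
and then split it using the lower semicontinuity of \emph{each} term separately: weak lower semicontinuity for the Dirichlet energy, and Fatou together with $\ind_{\Omega_0}\le\liminf_n\ind_{\Omega_n}$ (which you already use in (b)) for the measure. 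In other words, (a) and (b) must be established simultaneously, exactly as in the paper's Step~2; your ordering, in which (a) is closed off before (b), cannot work.

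Two further remarks. In (c), your Harnack argument for the direction $\sup_{y\in\Omega_0^c\cap B_R}\mathrm{dist}(y,\Omega_n^c)\to0$ at points $y\in\partial\Omega_0$ is a legitimate, self-contained substitute for the paper's appeal to the standard fact ``$L^1$ convergence plus uniform two-sided density estimates implies local Hausdorff convergence''; but note that it uses the positivity of $u_1$ and the comparison $Cu_1\ge|U|$ of Lemma~\ref{nondegu1}, which are not part of the stated hypotheses of the proposition (a general Lipschitz quasi-minimizer of $\mathcal F_K$ with no sign condition) — the same structural input enters the paper's route through the upper density bound of Lemma~\ref{densestlemma}, so this is a shared, not a new, limitation, but you should make the dependence explicit. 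Finally, when checking admissibility of $V_n$ in Lemma~\ref{scalingopt} you should also record that the constraint $\|U_{r_n,x_n}-V_n\|_{L^1}\le\eps r_n^{-(d+1)}$ holds for large $n$, not only the $L^\infty$ one; this is immediate but should be said.
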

\begin{proof}
We set for simplicity $U_n=U_{r_n,x_n}$ and we divide the proof in some steps, for sake of clarity.\\
{\bf Step 1.} Since  $U_{n}$ is bounded in $H^1_{loc}(\R^d;\R^k)$ (being uniformly Lipschitz) we have that $U_{n}$ converges weakly in $H^1_{loc}$ to $U_0\in H^1_{loc}(\R^d;\R^k)$. By the definition of $\Omega_n$ and the fact that $|U_{n}|$ converges locally uniformly to $|U_0|$ we have that 
$$\ind_{\Omega_0}\le \liminf_{n\to\infty}\ind_{\Omega_n}.$$ 
\noindent {\bf Step 2.} Let us now prove that $U_{n}$ converges strongly in $H^1_{loc}(\R^d;\R^k)$ to $U_0$ and that $\ind_{\Omega_n}$ converges to $\ind_{\Omega_0}$ pointwise on $\R^d$. Fixed a ball $B_R\subset\R^d$ it is sufficient to prove that 
\begin{equation}\label{limsupineq}
\lim_{n\to\infty}\left(\int_{B_R}|\nabla U_{n}|^2\,dx+\Lambda|B_R\cap{\Omega_n}|\right)=\int_{B_R}|\nabla U_0|^2\,dx+\Lambda|B_R\cap\Omega_0|.
\end{equation}
We notice that the function $U_{n}$ is a local minimizer of 
$$\F_n(V)=\left(1+r_n^{d+1}\qm\|U_{n}-V\|_{L^1}\right)\int_{\R^d}|\nabla V|^2\,dx+\Lambda\big|\{|V|>0\}\big|.$$
Consider a function $\varphi\in C^\infty_c(\R^d)$ such that $0\le \varphi\le 1$ and $B_R=\{\varphi=1\}$. We introduce the test function
$$\tilde U_n=\varphi U_0+(1-\varphi)U_{n}.$$
The optimality of $U_{n}$ now gives 
\begin{equation}\label{optUn}
\begin{split}
\int_{\{\varphi>0\}}|\nabla U_{n}|^2\,dx&+\Lambda\big|\{|U_{n}|>0\}\cap \{\varphi>0\}\big|\\&\le \left(1+r_n^{d+1}\qm\|U_{n}-\tilde U_n\|_{L^1}\right)\int_{\{\varphi>0\}}|\nabla \tilde U_n|^2\,dx+\Lambda\big|\{|\tilde U_n|>0\}\cap \{\varphi>0\}\big|\\
&\le \left(1+r_n^{d+1}\qm\|\varphi(U_0-U_{n})\|_{L^1}\right)\int_{\{\varphi>0\}}|\nabla \tilde U_n|^2\,dx+\Lambda\big|\{|\tilde U_n|>0\}\cap \{\varphi>0\}\big|\\
&\le \left(1+r_n^{d+1}\qm\|\varphi(U_0-U_{n})\|_{L^1}\right)\int_{\{\varphi>0\}}|\nabla \tilde U_n|^2\,dx\\
&\qquad+\Lambda\Big(\big|\{\varphi=1\}\cap \{|U_0|>0\}\big|+\big|\{0<\varphi<1\}\big|\Big)\\
\end{split}
\end{equation}
Since $U_{n}$ converges strongly $L^2(B_R;\R^k)$ and weakly $H^1_{loc}(\R^d;\R^k)$ to $U_0$, we can estimate 
\begin{align*}
\int_{\{\varphi>0\}}&|\nabla U_{n}|^2-\int_{\{\varphi>0\}}|\nabla \tilde U_n|^2\,dx\\
&=\int_{\{\varphi>0\}}|\nabla U_{n}|^2-\int_{\{\varphi>0\}}|\nabla (\varphi U_0+(1-\varphi)U_{n})|^2\,dx\\
&=\int_{\{\varphi>0\}}(\nabla U_{n}-\nabla (\varphi U_0+(1-\varphi)U_{n}))\cdot (\nabla U_{n}+\nabla (\varphi U_0+(1-\varphi)U_{n}))\,dx\\
&=\int_{\{\varphi>0\}}(\varphi\nabla (U_{n}-U_0)+(U_{n}-U_0)\nabla\varphi)\cdot (\varphi\nabla (U_{n}+U_0)+(U_{n}+U_0)\nabla\varphi+ 2\nabla((1-\varphi)U_{n}))\,dx\\
&=\int_{\{\varphi>0\}}\varphi^2(|\nabla U_{n}|^2-|\nabla U_0|^2)\,dx+2\int_{\{\varphi>0\}}\varphi\nabla(U_{n}-U_0)\cdot (1-\varphi)\nabla U_{n}\,dx+o(1/n)\\
&=\int_{\{\varphi>0\}}(1-(1-\varphi)^2)(|\nabla U_{n}|^2-|\nabla U_0|^2)\,dx+o(1/n).
\end{align*}
Now since $|\nabla U_{n}|$ converges weakly in $L^2(\{0<\varphi<1\};\R)$ to $|\nabla U_0|$, we have that 
$$\limsup_{n\to\infty}\int_{\{\varphi>0\}}\left(|\nabla U_{n}|^2-|\nabla \tilde U_n|^2\right)\,dx\ge \limsup_{n\to\infty}\int_{\{\varphi=1\}} \left(|\nabla U_{n}|^2-|\nabla U_0|^2\right)\,dx.$$
Substituting in the inequality~\eqref{optUn} above we obtain 
\begin{align*}
\limsup_{n\to\infty}&\left(\int_{\{\varphi=1\}} \left(|\nabla U_{n}|^2-|\nabla U_0|^2\right)\,dx+\Lambda(|\{\varphi=1\}\cap \Omega_n|-|\{\varphi=1\}\cap \Omega_0|)\right)\\
&\le \limsup_{n\to\infty}\left(\int_{\{\varphi>0\}}\left(|\nabla U_{n}|^2-|\nabla \tilde U_n|^2\right)\,dx+\Lambda(|\{\varphi=1\}\cap \Omega_n|-|\{\varphi=1\}\cap \Omega_0|)\right)\\
&\qquad\qquad\le \Lambda|\{0<\varphi<1\}|.
\end{align*}
Now, since $\varphi$ is arbitrary outside $B_R$, we get \eqref{limsupineq}. So we have proved part $(a)$ and $(b)$ of the Proposition.\\
\noindent {\bf Step 3. } It is well-known that the convergence $L^1$ of the sequence of characteristic functions $\ind_{\Omega_n}$ together with the fact that each $\Omega_n$ satisfies the density estimate 
$$\eps_0|B_r|\le |\Omega_n\cap B_r|\le (1-\eps_0)|B_r|,\qquad\forall r<r_0/r_n,$$
gives that both $\overline\Omega_n$ and $\Omega_n^c$ converge Hausdorff respectively to $\overline\Omega_0$ and $\Omega_0^c$ locally in $\R^d$, hence also part $(c)$ of the statement is concluded.\\
{\bf Step 4.} It remains only to prove the non-degeneracy of $U_0$.
We first note that every function $U_{r_n}$ is non-degenerate in the following sense: 
\begin{equation}\label{nondegUrn}
y\in \overline\Omega_n\Rightarrow \|U_{n}\|_{L^\infty(B_r(y))}\ge c_0 r,\ \forall r\le r_0/r_n.
\end{equation}
In fact if $y\in \overline\Omega_n$, then $r_n y\in \overline\Omega=\overline{\{|U|>0\}}$. By the non-degeneracy of $U$ we obtain 
$$r_n\|U_{n}\|_{L^\infty(B_r(y))}=\|U\|_{L^\infty(B_{r r_n}(x_n+r_n y))}\ge c_0 r r_n ,\ \forall r\le r_0/r_n,$$
which is precisely \eqref{nondegUrn}.
Our claim that the function $U_0$ is non-degenerate means 
\begin{equation}\label{nondegUinfty}
y\in \overline\Omega_0\Rightarrow \|U_0\|_{L^\infty(B_r(y))}\ge \frac{c_0}4 r,\ \forall r>0.
\end{equation}
Suppose that $y\in\overline\Omega_0$ and $r>0$. Then there is $y'\in B_{r_2}(y)$ such that $|U_0|(y')>0$. Then for $n$ large enough $y'\in \overline\Omega_n$. By the non-degeneracy of $U_{n}$ we have that there is a point $y_n\in \overline{B_{r/2}(y')}$ such that
$$2|U_{n}|(y_n)\ge \|U_{n}\|_{L^\infty(B_{r/2}(y'))}\ge c_0 r/2.$$
We can assume that $y_n$ converges to some $y_\infty\in  \overline{B_{r/2}(y')}$, for which the uniform convergence of $U_{n}$ gives $|U_0|(y_\infty)\ge c_0 r/4,$
and so we have \eqref{nondegUinfty}.
\end{proof}

\begin{lemma}[Optimality of the blow-up limits]\label{Uinftymin}
Let $U\in H^1(\R^d;\R^k)$ be a Lipschitz continuous function satisfying the quasi-minimality condition \eqref{Uoptcond}. Let $x_0\in\partial\{|U|>0\}$ and $U_0\in \mathcal{BU}_U(x_0)$. 
Then $ U_0$ is a local minimizer of the functional $\mathcal{F}_0$. 
\end{lemma}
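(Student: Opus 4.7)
The plan is to test local minimality of $U_0$ against an arbitrary competitor $\tilde U_0$ by pulling it back, via a cutoff interpolation, to a competitor for $U_n=U_{r_n,x_0}$ and then to apply the rescaled quasi-minimality from Lemma \ref{scalingopt}, where the correction factor vanishes at rate $r_n^{d+1}$.

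First I would fix $R>0$ and a competitor $\tilde U_0\in H^1_{loc}(\R^d;\R^k)$ with $\tilde U_0-U_0\in H^1_0(B_R;\R^k)$. For a small $\delta>0$ I pick a cutoff $\varphi\in C^\infty_c(B_{R+\delta})$ with $\varphi\equiv 1$ on $B_R$ and $0\le\varphi\le 1$, and I define
\begin{equation*}
\tilde U_n:=\varphi\,\tilde U_0+(1-\varphi)\,U_n,
\end{equation*}
so that $\tilde U_n=U_n$ outside $B_{R+\delta}$ and $\tilde U_n=\tilde U_0$ on $B_R$. By Proposition \ref{blowupexistence}(a) the sequence $U_n$ converges to $U_0$ strongly in $H^1(B_{R+\delta};\R^k)$, hence $\tilde U_n\to\tilde U_0$ strongly in $H^1(B_{R+\delta};\R^k)$ as well, and $\|U_n-\tilde U_n\|_{L^1}$ stays bounded (in fact goes to $\|U_0-\tilde U_0\|_{L^1}$).

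Next I apply Lemma \ref{scalingopt}: $U_n$ is a quasi-minimizer with constant $\qm r_n^{d+1}$, so
\begin{equation*}
\int_{B_{R+\delta}}\!|\nabla U_n|^2\,dx+\Lambda\big|\{|U_n|>0\}\cap B_{R+\delta}\big|\le \big(1+\qm r_n^{d+1}\|U_n-\tilde U_n\|_{L^1}\big)\!\int_{B_{R+\delta}}\!|\nabla\tilde U_n|^2\,dx+\Lambda\big|\{|\tilde U_n|>0\}\cap B_{R+\delta}\big|,
\end{equation*}
since $\tilde U_n=U_n$ on $\partial B_{R+\delta}$. The prefactor $\qm r_n^{d+1}\|U_n-\tilde U_n\|_{L^1}\to 0$. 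Splitting $B_{R+\delta}=B_R\cup(B_{R+\delta}\setminus B_R)$ and using strong $H^1$-convergence on the annulus for both $U_n\to U_0$ and $\tilde U_n\to\tilde U_0$, together with the $L^1$-convergence of the characteristic functions from Proposition \ref{blowupexistence}(b), I can pass to the limit and obtain
\begin{equation*}
\int_{B_R}|\nabla U_0|^2\,dx+\Lambda\big|\{|U_0|>0\}\cap B_R\big|\le \int_{B_R}|\nabla\tilde U_0|^2\,dx+\Lambda\big|\{|\tilde U_0|>0\}\cap B_R\big|+E(\delta),
\end{equation*}
where the error $E(\delta)$ collects the annular contributions, bounded by $\int_{B_{R+\delta}\setminus B_R}(|\nabla U_0|^2+|\nabla\tilde U_0|^2)\,dx+\Lambda|B_{R+\delta}\setminus B_R|$. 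Letting $\delta\to 0$ kills $E(\delta)$, yielding the desired local minimality on $B_R$; since $R$ and $\tilde U_0$ are arbitrary, $U_0$ is a local minimizer of $\mathcal F_0$ in the sense of Definition \ref{locmin}.

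The main subtlety is the measure term: although the characteristic functions $\ind_{\{|U_n|>0\}}$ converge in $L^1_{loc}$ by Proposition \ref{blowupexistence}(b), there is no such convergence a priori for $\ind_{\{|\tilde U_n|>0\}}$. This is handled by the pointwise inclusion $\{|\tilde U_n|>0\}\cap B_{R+\delta}\subset(\{|\tilde U_0|>0\}\cap B_R)\cup(B_{R+\delta}\setminus B_R)$, which comes directly from the definition of $\tilde U_n$ via the cutoff; this gives an upper bound on the right-hand side that is stable under the limit and localizes nicely as $\delta\to 0$.
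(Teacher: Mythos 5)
Your argument is correct and is essentially the paper's own proof: both interpolate between the competitor and the blow-up sequence with a cutoff, invoke the rescaled quasi-minimality of Lemma \ref{scalingopt} (whose correction prefactor is $O(r_n^{d+1})$), and pass to the limit using the strong $H^1$ convergence and the $L^1$ convergence of characteristic functions from Proposition \ref{blowupexistence}; the only difference is that you place the interpolation region in an outer annulus $B_{R+\delta}\setminus B_R$ while the paper places it in an inner collar $\{0<\eta<1\}\subset B_R$, which is immaterial. Two harmless points to tidy up: localizing the global inequality of Lemma \ref{scalingopt} to $B_{R+\delta}$ leaves an extra term $\qm r_n^{d+1}\|U_n-\tilde U_n\|_{L^1}\int_{\R^d\setminus B_{R+\delta}}|\nabla U_n|^2\,dx=O(r_n)$ on the right-hand side (which vanishes in the limit), and the admissibility constraints $\|\tilde U_n\|_{L^\infty}\le(\eps r_n)^{-1}$ and $\|U_n-\tilde U_n\|_{L^1}\le\eps\, r_n^{-(d+1)}$ should be verified — they hold for $n$ large once, as the paper does, one restricts to $L^\infty_{loc}$ competitors.
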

\begin{proof}
Let $x_0=0$ and $B_R\subset\R^d$ be a fixed ball. We first notice that if $U$ satisfies \eqref{Uoptcond} and $r>0$, then $U_r(x)=\frac1rU(rx)$ satisfies the following quasi-minimality condition in the ball $B_R$ (see Lemma \ref{scalingopt})
\begin{equation}\label{Uroptcond}
\begin{array}{ll}
\ds\big(1+\qm r^{d+1}\|U_r-\widetilde U\|_{L^1}\big)\int_{B_{R}}|\nabla U_r|^2\,dx+\Lambda\big|\{|U_r|>0\}\cap B_{R}\big|\\
\ds\qquad\qquad\qquad\qquad\le \big(1+\qm r^{d+1}\|U_r-\widetilde U\|_{L^1}\big)\int_{B_{R}}|\nabla \widetilde U|^2\,dx+\Lambda\big|\{|\widetilde U|>0\}\cap B_{R}\big|\\
\ds\qquad\qquad\qquad\qquad\qquad\qquad +\qm r\|U_r-\widetilde U\|_{L^1}\int_{\R^d}|\nabla U|^2\,dx,
\end{array}
\end{equation}
for every $\widetilde U\in H^1(\R^d;\R^k)\cap L^\infty(\R^d;\R^k)$ such that $U_r-\widetilde U\in H^1_0(B_R,\R^k)$ and 
$$\|U_r-\widetilde U\|_{L^1}\le \frac{\eps}{r^{d+1}}\qquad\text{and}\qquad \|\widetilde U\|_{L^\infty}\le \frac{1}{\eps r}.$$

 Let now $\widetilde U\in H^1_{loc}(\R^d;\R^k)\cap L^\infty_{loc}(\R^d;\R^k)$ be such that $U_0-\widetilde U\in H^1_0(B_R,\R^k)$ and let $\eta\in C^\infty_c(B_R)$ be such that $0\le \eta\le 1$. We consider a sequence $U_{r_n}$ converging to $U_0$ is sense of Proposition \ref{blowupexistence}.
We recall that $U_{r_n}\to U_0$ both uniformly in $B_R$ and strongly in $H^1(B_R)$.
Consider the test function $W_n=\widetilde U+(1-\eta)(U_{r_n}-U_0)$. Since $\widetilde U=U_0$ outside $B_R$ we have that $W_n=U_{r_n}$ outside $B_R$. 
Moreover, since $W_n-U_{r_n}=\widetilde U-U_0-\eta (U_{r_n}-U_0)$ and $U_{r_n}\to U_0$ in $L^1(B_R)$ we have that, for $n\ge n_0$ (where $n_0$ does not depend on $\eta$ but only on the sequence $r_n$), 
$$\|W_n-U_{r_n}\|_{L^1}\le 2\|\widetilde U-U_0\|_{L^1}\qquad\text{and}\qquad\|W_n-U_{r_n}\|_{L^\infty}\le 2\|\widetilde U-U_0\|_{L^\infty},$$ 
and so $W_n$ can be used as a test function in \eqref{Uroptcond}, thus obtaining 
\begin{equation*}
\begin{array}{ll}
\ds\big(1+\qm r_n^{d+1}\|U_{r_n}-W_n\|_{L^1}\big)\int_{B_{R}}\Big(|\nabla U_{r_n}|^2-|\nabla W_n|^2\Big)\,dx+\Lambda\big|\{|U_{r_n}|>0\}\cap B_{R}\big|\\
\ds\qquad\qquad\qquad\qquad\le \Lambda\Big(\big|\{|\widetilde U|>0\}\cap\{\eta=1\}\big| +\big|\{0<\eta<1\}\big|\Big)\\
\ds\qquad\qquad\qquad\qquad\qquad\qquad+2\qm r_n\|\widetilde U-U_0\|_{L^1}\int_{\R^d}|\nabla U|^2\,dx.
\end{array}
\end{equation*}
Now since $U_{r_n}\to U_0$ in $H^1(B_R;\R^k)$ and $W_n\to \tilde U$ in $H^1(B_R;\R^k)$ we have 
\begin{equation*}
\begin{array}{ll}
\ds\int_{B_{R}}|\nabla U_0|^2\,dx+\Lambda\big|\{|U_0|>0\}\cap B_{R}\big|\\
\ds\qquad\qquad\qquad\qquad\le \int_{B_{R}}|\nabla \widetilde U|^2\,dx+\Lambda\Big(\big|\{|\widetilde U|>0\}\cap\{\eta=1\}\big| +\big|\{0<\eta<1\}\big|\Big).
\end{array}
\end{equation*}
Since we can choose $\eta$ such that $|\{\eta=1\}|$ is arbitrarily close to $|B_R|$ we
obtain
\begin{equation*}
\begin{array}{ll}
\ds\int_{B_{R}}|\nabla U_0|^2\,dx+\Lambda\big|\{|U_0|>0\}\cap B_{R}\big|\le \int_{B_{R}}|\nabla \widetilde U|^2\,dx+\Lambda\big|\{|\widetilde U|>0\}\cap B_R\big|.
\end{array}
\end{equation*}
\end{proof}


\begin{lemma}[Homogeneity of the blow-up limits]\label{Uinftyhom}
Let $U\in H^1(\R^d;\R^k)$ be a Lipschitz continuous function satisfying the quasi-minimality condition \eqref{Uoptcond}. Let $x_0\in\partial\{|U|>0\}$ and $U_0\in \mathcal{BU}_U(x_0)$. Then $U_0$ is a one-homogeneous function.
\end{lemma}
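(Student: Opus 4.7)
The plan is to deduce homogeneity from the Weiss monotonicity formula by comparing the monotonicity of $\phi(U,x_0,\cdot)$ at scale $r_n r$ with the monotonicity of $\phi(U_0,0,\cdot)$ at scale $r$. By Lemma~\ref{Uinftymin}, $U_0$ is a local minimizer of $\mathcal{F}_0$, so Proposition~\ref{mono_weiss_global} applies: $r\mapsto\phi(U_0,0,r)$ is non-decreasing, and if it is constant then $U_0$ is $1$-homogeneous. Hence it suffices to show $\phi(U_0,0,\cdot)$ is constant on $(0,+\infty)$.

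First, I would record the scaling identity
\begin{equation*}
\phi(U_{r_n,x_0},0,r)=\phi(U,x_0,r_n r),
\end{equation*}
which is a direct calculation from the definition \eqref{weiss_fun} using $|\nabla U_n|(y)=|\nabla U|(x_0+r_n y)$, $\{|U_n|>0\}=r_n^{-1}(\{|U|>0\}-x_0)$, and the change of variables in the sphere integral. Since $U$ satisfies the quasi-minimality condition \eqref{Uoptcond} and $x_0\in\partial\{|U|>0\}$, Proposition~\ref{mono_weiss} guarantees that
$$L:=\lim_{s\to 0^+}\phi(U,x_0,s)$$
exists and is a finite real number. Consequently, for every fixed $r>0$, $\phi(U,x_0,r_n r)\to L$ as $n\to\infty$, so the right-hand side of the scaling identity converges to the constant $L$.

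Next, I would pass to the limit on the left-hand side using Proposition~\ref{blowupexistence}. The strong $H^1_{loc}$ convergence $U_n\to U_0$ gives $\int_{B_r}|\nabla U_n|^2\to\int_{B_r}|\nabla U_0|^2$, the $L^1_{loc}$ convergence of characteristic functions gives $|\{|U_n|>0\}\cap B_r|\to|\{|U_0|>0\}\cap B_r|$ (for every $r$ such that $\partial B_r$ intersects $\partial\Omega_0$ in a set of vanishing boundary measure, which holds for a.e. $r$), and the locally uniform convergence $U_n\to U_0$ inherited from the uniform Lipschitz bound gives $\int_{\partial B_r}|U_n|^2\,d\mathcal{H}^{d-1}\to\int_{\partial B_r}|U_0|^2\,d\mathcal{H}^{d-1}$ for every $r>0$. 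Therefore $\phi(U_0,0,r)=L$ for a.e. $r>0$. Since $r\mapsto\phi(U_0,0,r)$ is non-decreasing (Proposition~\ref{mono_weiss_global}), being constant on a dense set forces it to be constant on all of $(0,+\infty)$. The last assertion of Proposition~\ref{mono_weiss_global} then yields that $U_0$ is $1$-homogeneous.

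The main obstacle is the joint passage to the limit in all three terms defining $\phi$: the bulk gradient term requires strong $H^1$-convergence (not just weak), which is exactly what Proposition~\ref{blowupexistence}(a) provides, and the measure term requires $L^1$-convergence of the indicator functions, provided by part~(b). These two facts rely crucially on the fact that $U_n$ satisfies the rescaled quasi-minimality \eqref{Uroptcond} with vanishing error $r_n^{d+1}\qm$, so no residual perturbation of the Weiss formula persists in the blow-up limit.
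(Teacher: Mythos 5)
Your proof is correct and follows essentially the same route as the paper: the scaling identity $\phi(U_n,0,r)=\phi(U,x_0,r_nr)$, the existence of the finite limit from Proposition~\ref{mono_weiss}, the passage to the limit in all three terms of $\phi$ via Proposition~\ref{blowupexistence}, and the conclusion from the rigidity statement in Proposition~\ref{mono_weiss_global}. The only (harmless) difference is your a.e.-$r$ caveat for the measure term, which is actually unnecessary since $L^1(B_R)$ convergence of the indicator functions already gives $|\Omega_n\cap B_r|\to|\Omega_0\cap B_r|$ for every $r<R$; in any case you repair it correctly by monotonicity.
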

\begin{proof}
Let the sequence $r_n\to 0$ be such that the sequence $U_n(x):=\frac1{r_n}U(x_0+r_nx)$ converges to $U_0$ both uniformly and (see Proposition \ref{blowupexistence}) strongly in $H^1(B_R;\R^k)$, for every ball $B_R\subset\R^d$. Let $\phi_n$ be the Weiss functional corresponding to $U_n$
\begin{equation}\label{phin}
\phi_n(r):=\phi(U_n,0,r)=\frac{1}{r^{d}}\int_{B_r}|\nabla U_{n}|^2\,dx-\frac{1}{r^{d+1}}\int_{\partial B_r}|U_n|^2\,d\HH^{d-1}+\frac{\Lambda}{r^{d}}\big|\{|U_n|>0\}\cap B_r\big|.
\end{equation}  
We notice that  
\begin{equation}\label{phiscal}
\phi_n(r)=\phi(U,x_0,r_n r)\quad\text{for every}\quad r>0,
\end{equation}
where $\phi(U,x_0,r)$ is the Weiss functional corresponding to $U$ from \eqref{weiss_fun}. 
By \eqref{phiscal} and the fact that the limit $\ds\lim_{r\to0}\phi(U,x_0,r)$ exists (see Proposition \ref{mono_weiss}) we have that for every fixed $r>0$
\begin{equation}\label{bwproplimit}
\lim_{n\to\infty}\phi_n(r)=\lim_{n\to\infty}\phi(U,x_0,r_n r)=\lim_{\rho\to0}\phi(U,x_0,\rho).
\end{equation}
On the other hand Proposition \ref{blowupexistence} gives that 
$$\lim_{n\to\infty}\phi_n(r)=\phi_0(U_0,0,r),$$
Now since $\phi_0(U_0,0,r)$ is constant in $r$ (due to \eqref{bwproplimit}) and $U_0$ is optimal (due to Proposition \ref{Uinftymin}) we can apply Proposition \ref{mono_weiss_global} and finally obtain that $U_0$ is one-homogeneous function on $\R^d$.
\end{proof}

\begin{oss}\label{remsphere}
In the following Lemma and in Section~\ref{sec:regularity} we will use some rather well known facts about eigenvalues of the spherical Laplacian $\Delta_S$ on regions of the sphere. For more details we refer to~\cite{sperner,fh}, but we summarize here the main facts that we need in the following.
\begin{itemize}
\item Let $S\subset\partial B_1$ be an open subset of the sphere $\partial B_1\subset\R^d$, for $d\ge 2$, and let ${\bf C}_S=\{r\theta\, :\, \theta\in S,\ r>0\}$ be the cone generated by $S$. Then, given an $\alpha$-homogeneous function $u:{\bf C}_S\to \R$
for some $\alpha>0$, we have that $u$ is a solution of the problem
$$\Delta u=0\quad\text{in}\quad {\bf C}_S,\qquad u=0\quad\text{on}\quad \partial {\bf C}_S,$$
if and only if the trace $\varphi=u|_{\partial B_1}$ is a solution of the problem 
$$-\Delta_{S} \varphi =\lambda \varphi\quad\text{in}\quad S,\qquad \varphi=0\quad\text{on}\quad \partial S,$$
where $\lambda=\alpha(\alpha+d-2)$ and $\Delta_S$ denotes the Laplace-Beltrami operator on the sphere $\partial B_1$. We denote by $\{\lambda_j(S)\}_{j\ge 1}$ the non-decreasing sequence of eigenvalues on set $S\subset\partial B_1$ counted with the due multiplicity. 
\item For the spherical sets $S$ we have the inequality 
\begin{equation}\label{sphericalFK}
\lambda_1(S)\ge d-1\quad\text{for every}\quad S\subset \partial B_1\quad\text{such that}\quad \HH^{d-1}(S)\le \frac{d\omega_d}{2},
\end{equation}
and the equality is achieved if and only if, up to a rotation, $S$ is the half-sphere 
$$\partial B_1^+=\left\{x=(x_1,\dots,x_d)\in\partial B_1\, :\, x_d>0\right\}.$$
\item As a consequence of \eqref{sphericalFK} we get that 
\begin{equation}\label{lambda2S}
\lambda_2(S)\ge d-1\quad\text{for every}\quad S\subset \partial B_1,
\end{equation}
where the equality is achieved if and only if, up to a rotation, $\partial B_1\cap\{x_d\neq0\}\subset S$. Indeed, if the second eigenfunction $\varphi_2\in H^1_0(S)$ changes sign, then we can apply \eqref{sphericalFK} to the sets $\{\varphi_2>0\}$ and $\{\varphi_2<0\}$. If $\varphi_2\ge 0$ on $S$, then the sets $\{\varphi_1>0\}$ ($\varphi_1\ge 0$ being the first eigenfunction on $S$) and $\{\varphi_2>0\}$ are disjoint and again the claim follows by \eqref{sphericalFK}.
\item As a consequence of \eqref{sphericalFK} and \eqref{lambda2S} we obtain that 
if $S\subset\partial B_1$ is such that $\lambda_1(S)\le d-1$ and $\HH^{d-1}(S)<d\omega_d$, then the first eigenvalue $\lambda_1(S)$ is simple, that is there exists a unique (non-negative) function $\varphi_1\in H^1_0(S)$ such that 
$$-\Delta_{S} \varphi_1 =\lambda_1(S)\varphi_1\quad\text{in}\quad S,\qquad \varphi_1=0\quad\text{on}\quad \partial S,\qquad \int_S\varphi_1^2=1.$$
\end{itemize}
\end{oss}

\begin{lemma}\label{Uinftyhom2}
Let $U\in H^1(\R^d;\R^k)$ be a Lipschitz continuous function satisfying the quasi-minimality condition \eqref{Uoptcond}. Let $x_0\in\partial\{|U|>0\}$ and $U_0\in \mathcal{BU}_U(x_0)$. Then, there is a unit vector $\xi\in \partial B_1\subset\R^k$ such that $U_0=\xi|U_0|$. 
\end{lemma}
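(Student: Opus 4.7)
The strategy is to reduce the rigidity of $U_0$ to a spectral problem on the spherical cross-section $S:=\Omega_0\cap\partial B_1$ of the cone $\Omega_0:=\{|U_0|>0\}$, using the framework of Remark \ref{remsphere}. By Lemma \ref{Uinftymin}, $U_0$ is a local minimizer of $\mathcal{F}_0$, so each component $u_{0,i}$ is harmonic in $\Omega_0$ and vanishes on $\partial\Omega_0$. Combined with the $1$-homogeneity from Lemma \ref{Uinftyhom}, the identification in Remark \ref{remsphere} yields that each trace $\varphi_i:=u_{0,i}|_{\partial B_1}$ is an eigenfunction of the spherical Laplacian on $S$ with eigenvalue $1\cdot(1+d-2)=d-1$ and zero boundary values on $\partial S$. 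The task thus becomes to show that the eigenspace of $-\Delta_S$ on $S$ corresponding to the value $d-1$ is one-dimensional.

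Next, I would argue that $\varphi_1$ is a positive first eigenfunction. Since the first eigenfunction $u_1$ of the original optimal set is non-negative and, by Lemma \ref{nondegu1}, dominates $|U|$ up to a multiplicative constant, these two properties pass to the blow-up limit via the locally uniform convergence of Proposition \ref{blowupexistence}, giving $u_{0,1}\ge 0$ and $Cu_{0,1}\ge|U_0|$ on $\R^d$. In particular, $\varphi_1>0$ on the entire (relatively) open set $S$, so by the standard Krein--Rutman/Perron characterisation of principal eigenfunctions, $\varphi_1$ is a first eigenfunction on each connected component of $S$ and $\lambda_1(S)=d-1$.

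The main step, which I expect to be the crux of the argument, is showing that the eigenvalue $d-1$ is \emph{simple}, i.e. $\lambda_2(S)>d-1$. Suppose by contradiction that $\lambda_2(S)=d-1$. By the characterization recalled in Remark \ref{remsphere} (inequality \eqref{lambda2S}), up to a rotation one then has $\partial B_1\cap\{x_d\neq 0\}\subset S$, so $\HH^{d-1}(\partial B_1\setminus S)=0$. On the other hand, the Lipschitz continuity and non-degeneracy of $U_0$ (Proposition \ref{blowupexistence}(d)) allow Lemma \ref{densestlemma} to be applied to $U_0$ at $0\in\partial\Omega_0$, giving $|\Omega_0^c\cap B_1|\ge\eps_0\,|B_1|$. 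Since $\Omega_0$ is a cone by $1$-homogeneity, integrating in polar coordinates yields $\HH^{d-1}(\partial B_1\setminus S)=d\,|\Omega_0^c\cap B_1|\ge d\,\eps_0\,\omega_d>0$, a contradiction. Hence $\lambda_2(S)>d-1$ and the eigenspace for $d-1$ is spanned by $\varphi_1$ alone.

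To conclude, each $\varphi_i$ must be a scalar multiple $c_i\varphi_1$ of $\varphi_1$ (with $c_1=1$), and by $1$-homogeneity this equality extends radially to give $u_{0,i}=c_i\,u_{0,1}$ on $\R^d$. Setting $c:=(c_1,\dots,c_k)$, we obtain $U_0=c\,u_{0,1}$, so that $|U_0|=|c|\,u_{0,1}$ and $U_0=\xi|U_0|$ with the unit vector $\xi:=c/|c|\in\partial B_1\subset\R^k$. The delicate point in making the argument rigorous is the transfer of the density estimate and non-degeneracy of the first component to the blow-up limit $U_0$, which requires checking that Lemma \ref{densestlemma} genuinely applies to $U_0$ (equivalently, that $u_{0,1}$ remains a non-negative, non-degenerate component of a quasi-minimiser).
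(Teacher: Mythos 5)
Your argument is correct and takes essentially the same route as the paper: reduce to the spherical eigenvalue problem $-\Delta_S\varphi_i=(d-1)\varphi_i$ on $S=\partial B_1\cap\{|U_0|>0\}$, use $\HH^{d-1}(S)<d\omega_d$ together with the facts collected in Remark~\ref{remsphere} to see that the eigenspace at $d-1$ is one-dimensional, and conclude $\varphi_i=a_i\varphi$, $\xi=A/|A|$. The only (harmless) deviations are that your positivity step for $\varphi_1$ via Lemma~\ref{nondegu1} is not needed --- since $d-1$ is an eigenvalue one has $\lambda_1(S)\le d-1<\lambda_2(S)$ automatically once $\HH^{d-1}(S)<d\omega_d$ is known --- and that the measure bound itself can be obtained more directly by passing the exterior density estimate from $\Omega_n$ to $\Omega_0$ through the $L^1$ convergence of Proposition~\ref{blowupexistence}(b), rather than by reapplying Lemma~\ref{densestlemma} to $U_0$.
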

\begin{proof}
By Lemma \ref{Uinftyhom} $U_0=(u_1,\dots,u_k)$ is a one-homogeneous function and so is $|U_0|$. Let $S:=\partial B_1\cap \{|U_0|>0\}$. We first notice that all the components $u_1,\dots,u_k$ of $U_0$ are harmonic functions on the cone $\{|U_0|>0\}=\{r\xi\ :\ \xi\in S,\ r>0\}$. Thus in polar coordinates we have that $u_i(r,\theta)=r\varphi_i(\theta)$, where $\varphi_i$ satisfies
$$-\Delta_{S}\varphi_i=(d-1)\varphi_i\quad\text{in}\quad S,\qquad \varphi_i=0\quad\text{on}\quad \partial S,$$
that is, $d-1$ is an eigenvalue of the spherical Laplacian $\Delta_S$ on $S$ and the non-zero components of $U_0$ are (non-normalized) eigenfunctions. Now since $|S|<|\partial B_1|$ ( due to the optimality of $U_0$ ) the last point of Remark \ref{remsphere} implies that the first eigenvalue $\lambda_1(S)$ is simple. 
Then, denoting  by $\varphi$ the first normalized eigenfunction on $S$, we get that there are constants $a_1,\dots,a_k$ such that $\varphi_i=a_i\varphi$, for every $i=1,\dots,k$. Setting $A=(a_1,\dots,a_k)$ we have that $|U_0|=|A|\varphi$. Since $U_0$ is not constantly zero on $\partial B_1$ (see Proposition \ref{blowupexistence}), we have that $|A|\neq0$ and thus, taking $\xi=|A|^{-1}A$ we have the claim.
\end{proof}

\begin{lemma}\label{Uinftyopt2}
Let $U\in H^1(\R^d;\R^k)$ be a Lipschitz continuous function satisfying the quasi-minimality condition \eqref{Uoptcond}. Let $x_0\in\partial\{|U|>0\}$ and $U_0\in \mathcal{BU}_U(x_0)$. Then, the scalar function $|U_0|$ is a local minimizer of the Alt-Caffarelli functional. 
\end{lemma}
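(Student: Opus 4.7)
The plan is to exploit the rigid structure $U_0=\xi\,|U_0|$ obtained in Lemma~\ref{Uinftyhom2} to reduce the vectorial minimality from Lemma~\ref{Uinftymin} to a scalar statement. Since the real work was done in the preceding lemmas, I do not expect a genuine obstacle here; the argument is essentially algebraic, and the only mild care required is a truncation to reduce to non-negative competitors.

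First I would observe that, because $\xi=(\xi_1,\dots,\xi_k)\in\partial B_1\subset\R^k$ is a \emph{constant} unit vector, each component of $U_0$ satisfies $u_i=\xi_i\,|U_0|$ and hence $\nabla u_i=\xi_i\,\nabla|U_0|$. Consequently
\[
|\nabla U_0|^2=\sum_{i=1}^k\xi_i^{\,2}\,\big|\nabla|U_0|\big|^2=\big|\nabla|U_0|\big|^2,
\]
and trivially $\{|U_0|>0\}$ coincides when viewed through $U_0$ or through $|U_0|$. Therefore the vectorial functional $\mathcal F_0$ evaluated at $U_0$ equals the scalar Alt--Caffarelli functional evaluated at $|U_0|$, on every ball.

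Next, fix $B_R\subset\R^d$ and take a non-negative competitor $v\in H^1_{loc}(\R^d)$ with $v-|U_0|\in H^1_0(B_R)$. I would build the vector-valued competitor $V:=\xi\,v\in H^1_{loc}(\R^d;\R^k)$. By construction $V-U_0=\xi\,(v-|U_0|)\in H^1_0(B_R;\R^k)$, and since $|\xi|=1$ and $v\ge 0$ one has $|V|=v$ and $|\nabla V|^2=|\nabla v|^2$. Plugging $V$ into the minimality of $U_0$ provided by Lemma~\ref{Uinftymin} gives
\[
\int_{B_R}\big|\nabla|U_0|\big|^2\,dx+\Lambda\big|\{|U_0|>0\}\cap B_R\big|\le\int_{B_R}|\nabla v|^2\,dx+\Lambda\big|\{v>0\}\cap B_R\big|,
\]
which is exactly the Alt--Caffarelli minimality of $|U_0|$ tested against $v$.

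Finally, to accommodate a possibly sign-changing competitor $v$ with $v-|U_0|\in H^1_0(B_R)$, I would replace $v$ by $v^+$: since $|U_0|\ge 0$ one still has $v^+-|U_0|\in H^1_0(B_R)$, while $|\nabla v^+|^2\le|\nabla v|^2$ a.e. and $\{v^+>0\}\subset\{v>0\}$, so the right-hand side only decreases. This standard truncation reduces the general case to the non-negative one handled above, and thereby completes the proof.
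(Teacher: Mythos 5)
Your proposal is correct and follows essentially the same route as the paper: the paper also tests the vectorial minimality of $U_0$ (Lemma~\ref{Uinftymin}) against the competitor $\xi\tilde u$ and uses $|\xi|=1$ to identify $\mathcal F_0(U_0)$ with the scalar Alt--Caffarelli energy of $|U_0|$. Your extra truncation step $v\mapsto v^+$ is a harmless refinement that the paper absorbs by writing the measure term as $|\{|\tilde u|>0\}|$.
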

\begin{proof}
We set for simplicity $u=|U_0|$. Let $\xi$ be the unit vector from Lemma \ref{Uinftyhom2}. Let $\tilde u\in H^1_{loc}(\R^d)$ be such that the difference $u-\tilde u$ is supported in the ball $B_R$. Then the same holds for the function $U_0-\xi\tilde u$. By the optimality of $U_0$ we have 
\begin{align*}
\int_{B_R}|\nabla u|^2\,dx+\Lambda|\{u>0\}\cap B_R|&=\int_{B_R}|\nabla U_0|^2\,dx+\Lambda|\{|U_0|>0\}\cap B_R|\\
&\le\int_{B_R}|\nabla (\xi \tilde u)|^2\,dx+\Lambda|\{|\xi\tilde u|>0\}\cap B_R|\\
&=\int_{B_R}|\nabla \tilde u|^2\,dx+\Lambda|\{|\tilde u|>0\}\cap B_R|,
\end{align*}
which proves the claim.
\end{proof}

\section{Regularity of the free boundary}\label{sec:regularity}
In this section we conclude the proof of Theorem \ref{main}. 

\subsection{The optimality condition on the free boundary}
It is well-known (see for example \cite{altcaf}) that if $u$ is a local minimizer of the Alt-Caffarelli functional 
$$H^1_{loc}(\R^d)\ni u\mapsto\mathcal E_0(u):=\int|\nabla u|^2\,dx+\Lambda|\{u>0\}|,$$
and the boundary $\partial\{u>0\}$ is smooth, then the following boundary optimality condition holds :
$$|\nabla u|=\sqrt{\Lambda}\quad\text{on}\quad\partial \{u>0\}. $$
There are various ways to state this optimality for free boundaries that are not a priori smooth (see for example \cite{altcaf} and \cite{desilva}). In the case of vector-valued functionals the most appropriate one seems to be the approach exploiting the notion of a viscosity solution. 

\begin{deff}\label{viscoptimality}
Let $\Omega\subset\R^d$ be an open set and ${\bf\lambda}=(\lambda_1,\dots,\lambda_k)\in\R^k$ a vector with positive coordinates. We say that the continuous function $U=(u_1,\dots,u_k):\overline\Omega\to\R^k$ is a viscosity solution of the problem 
$$-\Delta U={\bf\lambda} U\quad\text{in}\quad\Omega,\qquad U=0\quad\text{on}\quad\partial\Omega,\qquad |\nabla |U||=\sqrt \Lambda\quad\text{on}\quad\partial\Omega,$$
if for every $i=1,\dots,k$ the component $u_i$ is a solution of the PDE
$$-\Delta u_i=\lambda_i u_i\quad\text{in}\quad\Omega,\qquad u_i=0\quad\text{on}\quad\partial\Omega,$$
and the boundary condition
$$|\nabla |U||=\sqrt \Lambda\quad\text{on}\quad\partial\Omega,$$
holds in viscosity sense, that is
\begin{itemize}
\item for every continuous function $\varphi:\R^d\to\R$ differentiable in $x_0\in\partial\Omega$ and such that ``$\varphi$ touches $|U|$ from below in $x_0$'' (that is $|U|-\varphi:\overline\Omega\to\R$ has a local minimum equal to zero in $x_0$), we have $|\nabla \varphi|(x_0)\le\sqrt\Lambda$.
\item for every continuous function $\varphi:\R^d\to\R$ differentiable in $x_0\in\partial\Omega$ and such that ``$\varphi$ touches $|U|$ from above in $x_0$'' (that is $|U|-\varphi :\overline\Omega\to\R$ has a local maximum equal to zero in $x_0$), we have $|\nabla \varphi|(x_0)\ge\sqrt\Lambda$.
\end{itemize}
\end{deff}

\begin{lemma}\label{optvisc}
Let $\Omega$ be a solution of the problem \eqref{introP}, $U=(u_1,\dots,u_k)$ be the vector of the first $k$ eigenfunctions on $\Omega$, $\lambda=(\lambda_1(\Omega),\dots,\lambda_k(\Omega))$ and $\ds\Lambda=\frac2d\big(\lambda_1(\Omega)+\dots+\lambda_k(\Omega)\big)$. Then $U$ is a viscosity solution to the problem 
\begin{equation}\label{viscsolUtion}
-\Delta U=\lambda U\quad\text{in}\quad\Omega,\qquad U=0\quad\text{on}\quad\partial\Omega,\qquad  |\nabla |U||=\sqrt{\Lambda}\quad\text{on}\quad\partial\Omega.
\end{equation}
\end{lemma}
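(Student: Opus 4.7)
The plan is to reduce the viscosity boundary condition to the classical scalar Alt-Caffarelli viscosity condition via the blow-up analysis of Section~\ref{sec:blowup}. The interior equation $-\Delta u_i = \lambda_i u_i$ in $\Omega$ is immediate from the spectral characterization of $U$, so the real work is the boundary condition. Fix $x_0 \in \partial\Omega$ and translate to assume $x_0 = 0$.

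First I would handle the touching-from-below case. Given a test function $\varphi$ differentiable at $0$ with $\varphi(0)=0$ and $\varphi \le |U|$ on $\overline\Omega$ near $0$, pick any blow-up sequence $r_n\to 0$. By Proposition~\ref{blowupexistence} and Lemma~\ref{Uinftyhom2}, up to a subsequence $U_n(y):=U(r_n y)/r_n$ converges locally uniformly (and strongly in $H^1_{loc}$) to some $U_0 = \xi|U_0|$, with the free boundaries $\partial\{|U_n|>0\}$ converging in Hausdorff distance to $\partial\{|U_0|>0\}$. The rescaled test function $\varphi_n(y):=\varphi(r_n y)/r_n$ converges locally uniformly to the linear function $L(y):=\nabla\varphi(0)\cdot y$. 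Passing the touching inequality to the limit, together with $|U_0|\ge 0$ and the Hausdorff convergence of the free boundaries, yields $L^+ \le |U_0|$ on all of $\R^d$ with equality at the origin. Since $|U_0|$ is a non-negative local minimizer of the scalar Alt-Caffarelli functional (Lemma~\ref{Uinftyopt2}), the classical viscosity condition at the free boundary gives $|\nabla L(0)| \le \sqrt\Lambda$; the case $\nabla\varphi(0)=0$ is trivial.

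The touching-from-above case proceeds analogously: the same blow-up procedure produces $|U_0|\le L^+$ on $\R^d$ with equality at $0$. Here the non-degeneracy from Proposition~\ref{blowupexistence}(d) plays a crucial role, forcing $|U_0|\not\equiv 0$ on every neighborhood of $0$; this in turn forces $\nabla L\ne 0$ and $\{|U_0|>0\}\subset\{L>0\}$. The scalar Alt-Caffarelli viscosity condition from above then delivers $|\nabla\varphi(0)|\ge \sqrt\Lambda$.

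The main obstacle is justifying the last step, i.e.\ the scalar viscosity condition for the $1$-homogeneous local minimizer $|U_0|$. The route to it rests on the full machinery built in Sections~\ref{sec:nondeg}--\ref{sec:blowup}: the quasi-minimality of $U$ (Proposition~\ref{quasimin_brno}), the Lipschitz regularity and non-degeneracy of $|U|$ (Proposition~\ref{prop:nondeg}), the Weiss monotonicity formula that forces the homogeneity of blow-ups (Proposition~\ref{mono_weiss}), and above all the observation (Lemma~\ref{Uinftyhom2}) that $U_0 = \xi|U_0|$ has a constant direction on the sphere, which transfers the vectorial minimality of $U_0$ into scalar Alt-Caffarelli minimality for $|U_0|$ (Lemma~\ref{Uinftyopt2}). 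Once this reduction is in hand, the viscosity condition at $0$ for the $1$-homogeneous scalar minimizer $|U_0|$ is classical, following either from direct comparison with the half-plane solution $\sqrt\Lambda(y\cdot\nu)^+$ or from the general free boundary theory of Caffarelli (see \cite{Caffa1,Caffa2}).
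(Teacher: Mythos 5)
Your proposal follows essentially the same route as the paper: blow up at the touching point, rescale the test function to a linear function, invoke the structure of blow-ups (Lemma~\ref{Uinftyhom2} and Lemma~\ref{Uinftyopt2}) to reduce to a $1$-homogeneous scalar Alt--Caffarelli minimizer touched by $L^+$, and conclude. The only difference is cosmetic: where you cite the ``classical'' viscosity property of scalar minimizers, the paper classifies the blow-up explicitly (the positivity set must be a half-space by the spherical eigenvalue facts of Remark~\ref{remsphere} and the density estimate, whence $|U_0|=\sqrt{\Lambda}\,x_d^+$), which amounts to the same comparison with the half-plane solution you describe.
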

\begin{proof}
From Theorem~\ref{knownstuffmainopt0} it follows that $|U|:\R^d\to\R^k$ is Lipschitz continuous. We only have to prove that the identity 
$|\nabla |U||=\sqrt \Lambda$ 
holds in viscosity sense on the boundary $\partial\Omega$. 

{\it Step 1. }Suppose first that $\varphi$ touches $|U|$ from below in $x_0\in\partial\Omega$ and assume $x_0=0$. Consider the blow-up sequences 
\begin{equation*}
U_n(x)=\frac1{r_n}U(r_nx)\qquad\text{and}\qquad\varphi_n(x)=\frac1{r_n}\varphi(r_nx),
\end{equation*}
for a sequence of radii $r_n\to 0$. Up to a subsequence we have that the blow-up limits 
\begin{equation}\label{bulim1}
U_0=\lim_{n\to\infty}U_n(x)\qquad\text{and}\qquad\varphi_0=\lim_{n\to\infty}\varphi_n(x),
\end{equation}
exist where the convergence is locally uniform in $\R^d$.  We first notice that, as $\varphi$ is smooth, we have $\varphi_0(x)=\xi\cdot x$ for a vector $\xi\in\R^d$. Without loss of generality we may assume that $\xi=a e_d$ for some constant $a>0$, thus 
\begin{equation}\label{bulim112}
|\nabla\varphi(0)|=|\nabla\varphi_0(0)|=a\qquad\text{and}\qquad \varphi_0(x)=a x_d.
\end{equation}
Now, since $|U_0|\ge \varphi_0$, we obtain that $|U_0|>0$ on $\{x_d>0\}$. By Proposition \ref{Uinftyhom} we have that $U_0$ is a $1$-homogeneous harmonic function on the cone $\{|U_0|>0\}\supset\{x_d>0\}$. Thus, necessarily $U_0=0$ on the hyperplane $\{x\in\R^d\ :\ x_d=0\}$ and by the second point of Remark \ref{remsphere} we have only two possibilities:
$$\{|U_0|>0\}=\{x_d>0\}\qquad\text{or}\qquad\{|U_0|>0\}=\{x_d\neq0\}.$$
The second case is ruled out since, due to Proposition \ref{blowup}, $|U_0|$ is a local minimizer of the Alt-Caffarelli functional and so it has to satisfy an exterior density estimate, which is not the case of the set $\{x_d\neq0\}$. Thus the only possibility is $\{|U_0|>0\}=\{x_d>0\}$. In particular the boundary $\partial\{|U_0|>0\}$ is smooth as well as the function $U_0$ whose components are linear functions. Since $|U_0|$ is a minimizer of the Alt-Caffarelli functional, it satisfies the optimality condition 
\begin{equation}\label{altcafoptcond}
|\nabla |U_0||=\sqrt{\Lambda}\quad\text{on}\quad \{x_d=0\}.
\end{equation}
Thus we obtain that $|U_0|=\sqrt\Lambda x_d^+$. Now, by the inequality $|U_0|\ge \varphi_0$, we get that $a\le \sqrt\Lambda$, which concludes the proof of Step 1.

{\it Step 2. }Suppose now that $\varphi$ touches $|U|$ from above at $x_0=0$ and once again we consider the blow-up limits $U_0$ and $\varphi_0$ defined in \eqref{bulim1} and we assume that $\varphi_0$ is as in \eqref{bulim112}. Due to the non-degeneracy of $U_0$ (see Proposition \ref{blowupexistence}) we get that $U_0\not\equiv0$ and $a>0$.  Since $U_0\le \varphi_0$ we have that the cone $\{|U_0|>0\}$ is contained in the half-space $\{x_d>0\}$. By the $1$-homogeneity of $U_0$ and Remark \ref{remsphere} we obtain that necessarily $\{|U_0|>0\}=\{x_d>0\}$. In particular, $\partial\{|U_0|>0\}$ is smooth and $|U_0|$ is linear. In conclusion, applying as above Proposition \ref{blowup}, we get that $|U_0|$ satisfies \eqref{altcafoptcond}, which gives that $|U_0|=\sqrt\Lambda x_d^+$ and $a\ge \sqrt\Lambda$.
\end{proof}

\subsection{Regular and singular parts of the free boundary}
Let $\Omega$ be a solution of \eqref{introP}. We define the regular part of the free boundary (or the regular set) $Reg(\partial\Omega)$ to be the set 
of points of density $1/2$ of $\Omega$, that is, $Reg(\partial\Omega):=\Omega^{(1/2)}.$
On the other hand, the singular part of the free boundary (or the singular set) $Sing(\partial\Omega)$ is defined as the complementary of $Reg(\partial\Omega)$
$$Sing(\partial\Omega):= \partial\Omega\setminus Reg(\partial\Omega).$$
In this subsection we prove that $Reg(\partial\Omega)$ is relatively open in $\partial\Omega$ (i.e. $Sing(\partial\Omega)$ is a closed set).

\begin{lemma}[Density gap]\label{gap}
There exists a constant $\delta>0$ such that for every non-trivial $1$-homogeneous local minimizer $u$ of the Alt-Caffarelli functional
$$H^1_{loc}(\R^d)\ni u\mapsto \mathcal E_0(u)=\int|\nabla u|^2\,dx+\Lambda|\{u>0\}|,$$ 
we have that 
$$0\notin\Omega_u^{(\gamma)},\quad\text{for every}\quad\gamma\in(1/2,1/2+\delta),$$
where $\Omega_u=\{u>0\}$.
\end{lemma}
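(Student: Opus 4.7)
The plan is to argue by contradiction via compactness and a classification/flatness dichotomy. Suppose no such $\delta$ exists. Then there is a sequence of nontrivial $1$-homogeneous local minimizers $u_n$ of $\mathcal{E}_0$ (nonnegative, as is standard in the one-phase setting) and a sequence $\gamma_n\to 1/2^+$ with $\gamma_n>1/2$, such that $0\in\Omega_{u_n}^{(\gamma_n)}$. Using the scale invariance of the Alt-Caffarelli functional combined with $1$-homogeneity, one may normalize so that, say, $\sup_{\partial B_1}u_n=1$.

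The first step is to extract a limit. By the Lipschitz regularity, non-degeneracy and density estimates for local minimizers of $\mathcal{E}_0$ (the scalar analogues of the results of Section~\ref{sec:nondeg}, originally proved in~\cite{altcaf}), the sequence $(u_n)$ is locally equi-Lipschitz and uniformly non-degenerate. Arguing exactly as in Proposition~\ref{blowupexistence}, up to a subsequence $u_n\to u_\infty$ locally uniformly and strongly in $H^1_{\mathrm{loc}}$, with Hausdorff convergence of $\overline{\Omega_{u_n}}$ and $\Omega_{u_n}^c$ and $L^1$ convergence of $\ind_{\Omega_{u_n}}$ to the corresponding objects for $u_\infty$. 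The limit $u_\infty$ is then itself a nontrivial $1$-homogeneous local minimizer of $\mathcal{E}_0$ with density exactly $1/2$ at $0$.

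Next I classify $u_\infty$. By $1$-homogeneity and harmonicity in $\Omega_{u_\infty}$, the trace $u_\infty|_{\partial B_1}$ is a first Dirichlet eigenfunction of the spherical Laplacian on $S_\infty:=\Omega_{u_\infty}\cap\partial B_1$ with eigenvalue $d-1$, while $\HH^{d-1}(S_\infty)=d\omega_d/2$ thanks to the density being $1/2$. The equality case of the spherical Faber-Krahn inequality \eqref{sphericalFK} (see Remark~\ref{remsphere}) then forces $S_\infty$ to be a half-sphere; hence, up to a rotation, $\Omega_{u_\infty}=\{x_d>0\}$ and $u_\infty$ is linear on that half-space. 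The standard free boundary condition on the smooth portion of $\partial\Omega_{u_\infty}$ pins down the slope: $u_\infty(x)=\sqrt{\Lambda}\,x_d^+$.

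The last step, which I expect to be the main obstacle, is a flatness-to-regularity argument applied to $u_n$. From the uniform convergence $u_n\to\sqrt{\Lambda}\,x_d^+$ on $B_2$, together with the Hausdorff convergence of $\overline{\Omega_{u_n}}$ and $\Omega_{u_n}^c$ (which supplies flatness of the free boundary both from inside and outside), for $n$ large $u_n$ is $\varepsilon_n$-flat in the sense of Alt-Caffarelli on $B_2$, with $\varepsilon_n\to 0$. The classical flatness theorem of~\cite{altcaf} (or its viscosity reformulation by De Silva) then yields that $\partial\Omega_{u_n}$ is a $C^{1,\alpha}$ hypersurface in $B_{1/2}$ for $n$ large. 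But $\Omega_{u_n}$ is a cone with apex at $0$ by $1$-homogeneity, and a cone whose topological boundary is a $C^{1}$ hypersurface in a full neighborhood of its apex is necessarily a half-space through $0$. This forces the density of $\Omega_{u_n}$ at $0$ to be exactly $1/2$, contradicting $\gamma_n>1/2$ and completing the proof.
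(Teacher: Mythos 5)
Your proposal is correct and follows essentially the same route as the paper's proof: a compactness argument on a contradicting sequence of $1$-homogeneous minimizers, classification of the limit as $\sqrt{\Lambda}\,x_d^+$ via the equality case of the spherical Faber--Krahn inequality of Remark~\ref{remsphere}, and then the flatness theorem of De Silva \cite{desilva} applied to $u_n$ for large $n$ to force density exactly $1/2$ at the conical vertex. The only superfluous step is the normalization $\sup_{\partial B_1}u_n=1$, which cannot be achieved by rescaling (a $1$-homogeneous function is blow-up invariant) and would alter the constant $\Lambda$ if done by multiplication; it is also unnecessary, since the uniform Lipschitz and non-degeneracy bounds of \cite{altcaf} depend only on $d$ and $\Lambda$ and already give the compactness you need.
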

\begin{proof}
Suppose by contradiction that there are an infinitesimal sequence of positive real numbers $\delta_n$ and a sequence $u_n$ of $1$-homogeneous non-zero local minimizers of $\mathcal E_0$ such that 
$$\frac{|B_r\cap\Omega_n|}{|B_r|}=\frac12+\delta_n,\quad\text{for every}\quad r>0,$$
where $\Omega_n=\{u_n>0\}$. By \cite[Section 3]{altcaf} the sequence $u_n$ is uniformly Lipschitz and non-degenerate and so, up to a subsequence it converges to a $1$-homogeneous non-zero function $u_0$. Reasoning as in \cite[Lemma 5.4]{altcaf} it is straightforward to check that $u_0$ is a local minimizer of  $\mathcal E_0$ and, in particular, harmonic on the cone $\Omega_0=\{u_0>0\}$. Moreover, using the density assumption on $\Omega_n$ and passing to the limit as $n\to\infty$ we deduce
$$\frac{|B_r\cap\Omega_0|}{|B_r|}\le\frac12,\quad\text{for every}\quad r>0.$$
Thus, by the second point of Remark \ref{remsphere}, up to a change of coordinates we may assume, that $\Omega_0=\{x_d>0\}$ and $u_0(x)=a x_d^+$, for some $a>0$. By the uniform convergence of $u_n$, for every $\eps>0$ we can find $n_0$ such that 
$$a(x_d-\eps)_+\le u_n(x)\le a(x_d+\eps)_+\quad\text{for every}\quad x\in B_1,\quad n\ge n_0.$$ 
Applying Theorem 1.1 from \cite{desilva} we obtain that for $n$ large enough $\partial\Omega_n$ is $C^{1,\alpha}$ and so $0\in\Omega_n^{(1/2)}$. In particular $\delta_n=0$ in contradiction with the initial assumption.
\end{proof}

\begin{lemma}\label{gap42}
Let $\Omega$ be a solution of \eqref{introP} and $U=(u_1,\dots,u_k)$ be the vector of the first $k$ eigenfunctions on $\Omega$. Then the following facts do hold:
\begin{enumerate}[(i)]
\item For every boundary point $x_0\in\partial\Omega$ we have that
$$\liminf_{r\to0}\frac{|B_r(x_0)\cap\Omega|}{|B_r|}\ge \frac12\ .$$
\item For every $\gamma\ge 1/2$ we have 
$$\Omega^{(\gamma)}=\Big\{x_0\in\partial\Omega\ :\ \lim_{r\to0}\phi(U,x_0,r)=\Lambda\omega_d \gamma\Big\},$$
where we recall that $\omega_d=|B_1|$ and $\phi(U,x_0,r)$ is the Weiss functional defined in \eqref{weiss_fun}.
\item There is a constant $\delta>0$ such that 
$$\ds\partial\Omega=\bigcup_{\gamma\in\left\{\frac12\right\}\cup\left[\frac12+\delta,1\right[}\Omega^{(\gamma)}.$$
\end{enumerate}
\end{lemma}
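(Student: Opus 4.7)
The plan is to combine blow-up analysis with the Weiss monotonicity formula, the central observation being the following cancellation: for any $U_0\in\mathcal{BU}_U(x_0)$, Proposition~\ref{blowup} gives that $U_0$ is $1$-homogeneous, $|U_0|$ is a nontrivial local minimizer of the Alt--Caffarelli functional, and every component of $U_0$ is harmonic on $\Omega_0:=\{|U_0|>0\}$. Using $\partial_\nu U_0=U_0/r$ on $\partial B_r$ (by homogeneity) and integrating by parts one obtains
\[
\int_{B_r}|\nabla U_0|^2\,dx \;=\; \int_{\partial B_r}U_0\cdot\partial_\nu U_0\,d\HH^{d-1} \;=\; \frac{1}{r}\int_{\partial B_r}|U_0|^2\,d\HH^{d-1},
\]
so the first and third terms of $\phi(U_0,0,r)$ cancel and, using that $\Omega_0$ is a cone,
\[
\phi(U_0,0,r)\;=\;\Lambda\,|\Omega_0\cap B_1|\qquad\text{for every } r>0.
\]

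Combining this with Proposition~\ref{mono_weiss} (existence of $\ell(x_0):=\lim_{r\to 0^+}\phi(U,x_0,r)$) and the scaling identity \eqref{phiscal} yields $\ell(x_0)=\Lambda|\Omega_0\cap B_1|$ for every blow-up limit $U_0$. In particular the ratio $|\Omega_0\cap B_1|/|B_1|$ does not depend on the chosen blow-up subsequence, and the $L^1$-convergence of characteristic functions from Proposition~\ref{blowupexistence}(b) promotes this into the existence of the full density
\[
\gamma(x_0)\;:=\;\lim_{r\to0}\frac{|\Omega\cap B_r(x_0)|}{|B_r|}\;=\;\frac{\ell(x_0)}{\Lambda\omega_d},
\]
at every $x_0\in\partial\Omega$. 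This is exactly the content of (ii), the upper bound $\gamma(x_0)<1$ being supplied by Corollary~\ref{densestcor}.

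For (i), write $|U_0|(r\theta)=r\varphi(\theta)$; then $\varphi$ is the (positive) first eigenfunction of $-\Delta_S$ on $S:=\Omega_0\cap\partial B_1$ with eigenvalue $d-1$. The spherical Faber--Krahn inequality \eqref{sphericalFK} excludes $\HH^{d-1}(S)<d\omega_d/2$ and, by its rigidity part (half-sphere equality case), forces $\HH^{d-1}(S)=d\omega_d/2$ whenever $\HH^{d-1}(S)\le d\omega_d/2$. In either case $\gamma(x_0)=\HH^{d-1}(S)/(d\omega_d)\ge 1/2$, proving (i).

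For (iii), the blow-up limit $|U_0|$ at $x_0$ realizes the density $\gamma(x_0)$ at the origin as a nontrivial $1$-homogeneous local minimizer of the Alt--Caffarelli functional, so the density-gap Lemma~\ref{gap} excludes $\gamma(x_0)\in(1/2,1/2+\delta)$ for the universal $\delta>0$ supplied by that lemma. Together with $\gamma(x_0)\in[1/2,1)$ from (i) and Corollary~\ref{densestcor}, this produces the claimed decomposition. The main technical point is the cancellation in $\phi$ at a $1$-homogeneous minimizer: it decouples the density from the choice of blow-up subsequence and upgrades the Weiss-based lower bound into a genuine limit, which is what lets the spherical Faber--Krahn rigidity and the gap Lemma~\ref{gap} snap the admissible densities onto $\{1/2\}\cup[1/2+\delta,1)$.
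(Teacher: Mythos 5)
Your proof is correct and follows essentially the same route as the paper's: the cancellation of the Dirichlet and boundary terms of $\phi$ at a $1$-homogeneous harmonic blow-up, combined with Proposition~\ref{mono_weiss}, the scaling identity for $\phi$, and the $L^1$-convergence of the rescaled sets, to identify the density with the Weiss limit, then the spherical eigenvalue bound for (i) and Lemma~\ref{gap} for (iii). The only (immaterial) difference is that you derive (i) as a corollary of (ii) plus the Faber--Krahn rigidity on the sphere, whereas the paper proves (i) by a separate contradiction/blow-up argument before establishing (ii).
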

\begin{proof}
\begin{enumerate}[(i)]
\item Suppose that this is not the case. Then, there is a point $x_0=0$ and a sequence $r_n\to0$ such that 
$$\lim_{n\to\infty}\frac{|B_{r_n}\cap\Omega|}{|B_{r_n}|}<\frac12.$$
Setting $U_n(x)=\frac1{r_n}U(r_nx)$ and $\Omega_n=\{|U_n|>0\}$ we can suppose that $U_n$ converges in $H^1_{loc}(\R^d;\R^k)$ to a non-zero $1$-homogeneous function $U_0$, such that $|U_0|$ is a one-homogeneous local minimizer of the Alt-Caffarelli functional $\mathcal E_0$. Moreover, we can suppose that the sequence of conic level sets $\Omega_n$ converges in $L^1_{loc}$ to the cone $\Omega_0=\{|U_0|>0\}$. In particular we have 
$$\frac{|B_{1}\cap\Omega_0|}{|B_{1}|}=\lim_{n\to\infty}\frac{|B_{1}\cap\Omega_n|}{|B_{1}|}=\lim_{n\to\infty}\frac{|B_{r_n}\cap\Omega|}{|B_{r_n}|}<\frac12,$$
which is a contradiction since there cannot be a non-trivial $1$-homogeneous harmonic function on a cone of density less that $1/2$.
\item Let $x_0\in\partial\Omega$. We suppose that $x_0=0$ and set $\phi(r):=\phi(U,x_0,r)$. By Proposition \ref{mono_weiss}, the limit $\ds\lim_{r\to0}\phi(r)$ does exist. We set $\gamma$ to be the limit 
$$\gamma:=\frac{1}{\Lambda\omega_d}\lim_{r\to0}\phi(r).$$
On the other hand, consider an arbitrary sequence $r_n\to 0$. There is a subsequence, that we still denote by $r_n$, such that the corresponding blow-up sequence $U_n(x):=\frac1{r_n}U(r_nx)$ converges locally uniformly in $\R^d$. 
Defining $\phi_n(r):=\phi(U_n,0,r)$ as in \eqref{phin} we have $\phi_n(r)=\phi(rr_n)$ and thus, as in Proposition \ref{Uinftyhom}, 
\begin{equation}\label{inftylimtorino}
\begin{array}{ll}
\ds\gamma&\ds=\frac{1}{\Lambda\omega_d}\lim_{n\to\infty}\phi(r r_n)=\frac{1}{\Lambda\omega_d}\lim_{n\to\infty}\phi_n(r)\\
&\ds=\frac{1}{\Lambda\omega_d}\left[\frac1{r^{d}}\left(\int_{B_r}|\nabla U_0|^2\,dx+\Lambda|\{|U_0|>0\}\cap B_r|\right)-\frac{1}{r^{d+1}}\int_{\partial B_r}|U_0|^2\,d\HH^{d-1}\right],
\end{array}
\end{equation}
where $U_0$ is the blow-up limit of $U_n$. By the $1$-homogeneity of $U_0$ and the fact that it is harmonic on $\{|U_0|>0\}$ we obtain that 
$$\frac{1}{r^{d}}\int_{B_r}|\nabla U_0|^2\,dx-\frac{1}{r^{d+1}}\int_{\partial B_r}|U_0|^2\,d\HH^{d-1}=0.$$
Thus, by\eqref{inftylimtorino}, Proposition \ref{blowupexistence} (2) and the fact that $\{|U_n|>0\}=r_n\Omega$, we get that 
$$\gamma=\frac{|\{|U_0|>0\}\cap B_r|}{|B_r|}=\lim_{n\to\infty}\frac{|\Omega_n\cap B_r|}{|B_r|}=\lim_{n\to\infty}\frac{|\Omega\cap B_{rr_n}|}{|B_{rr_n}|}.$$
Since the sequence $r_n$ is arbitrary we have that $x_0\in\Omega^{(\gamma)}$, which gives the claim. 
\item By the previous point, for every $x_0\in \partial\Omega$ the limit 
$$\frac{1}{\Lambda\omega_d}\lim_{r\to0}\phi(U,x_0,r),$$
exists and coincides with the density of $\Omega$ in $x_0$. By point {\it (i)} we have that $\gamma\ge 1/2$. On the other hand, by Lemma \ref{gap} we have that $\gamma>1/2+\delta$, which gives the claim.
\end{enumerate}
\end{proof}

\begin{oss}
We highlight that the claim of Lemma~\ref{gap42} $(ii)$ can be restated as follows:
\begin{equation}\label{density=phi}
\lim_{r\rightarrow 0}\frac{|\Om\cap B_r(x_0)|}{|B_r|}=\frac{1}{\Lambda\omega_d}\lim_{r\rightarrow 0}{\phi(U,x_0,r)},\quad\text{for every}\quad x_0\in\partial\Omega.
\end{equation}
\end{oss}
In the next Proposition we show that the regular part of the free boundary is relatively open in the topological boundary of an optimal set. This is due to a general principle which can be stated as follows:\\
{\it Suppose that $Y\subset X$ is a set for which there exists a function $f_Y:X\times[0,+\infty)\to[0,+\infty)$ such that:
\begin{itemize}
\item the function $f_Y(\cdot,r):X\to[0,+\infty)$ is continuous for every fixed $r>0$;
\item  the function $f_Y(x,\cdot):[0,+\infty)\to[0,+\infty)$ is continuous and non-decreasing for every fixed $x\in X$;
\item $Y=\{x\ :\ f_Y(x,0)=0\}$ and there is $\delta>0$ such that $\{x\ :\ 0<f_Y(x,0)<\delta\}=\emptyset$.
\end{itemize}
Then $Y$ is relatively open in $X$.}\\
In fact the first two points imply that the function $f_Y(\cdot,0):X\to[0,+\infty)$ is upper semi-continuous and this, combined with the last point, gives the conclusion. In our case the situation is slightly different but follows by the same principle. For sake of completeness we give here an elementary proof in our situation. 
\begin{prop}\label{gapprop}
Let $\Omega$ be a solution of \eqref{introP}. 
Then the regular set $Reg(\partial\Omega)$ is an open subset of $\partial\Omega$.
\end{prop}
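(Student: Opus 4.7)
The plan is to show that the density function $\rho:\partial\Omega\to[0,1]$ defined by $\rho(x_0):=\lim_{r\to 0^+}|\Omega\cap B_r(x_0)|/|B_r|$ is upper semi-continuous on $\partial\Omega$. Combined with the gap dichotomy from Lemma~\ref{gap42}$(iii)$, which asserts that $\rho(\partial\Omega)\subset\{1/2\}\cup[1/2+\delta,1)$, this immediately gives that $Reg(\partial\Omega)=\rho^{-1}(\{1/2\})$ is relatively open in $\partial\Omega$: indeed if $x_n\to x_0$ in $\partial\Omega$ with $\rho(x_0)=1/2$, then upper semi-continuity forbids $\limsup_n\rho(x_n)\ge 1/2+\delta$, so eventually $\rho(x_n)=1/2$.

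The identity~\eqref{density=phi} recasts $\rho$ in terms of the Weiss density: $\rho(x_0)=(\Lambda\omega_d)^{-1}\lim_{r\to 0^+}\phi(U,x_0,r)$. By Proposition~\ref{mono_weiss}, applied with constants $r_0,C_1$ depending only on the data of problem~\eqref{introP} (and hence uniform in $x_0$), the map $r\mapsto \phi(U,x_0,r)+C_1 r$ is non-decreasing on $(0,r_0)$, so
\[
\rho(x_0)\,\Lambda\omega_d\le \phi(U,x_0,r)+C_1 r\quad\text{for every}\quad r\in(0,r_0)\quad\text{and}\quad x_0\in\partial\Omega.
\]
The key remaining fact is that, for each fixed $r\in(0,r_0)$, the map $x_0\mapsto \phi(U,x_0,r)$ is continuous on $\R^d$. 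The two volume terms $\int_{B_r(x_0)}|\nabla U|^2\,dx$ and $|\{|U|>0\}\cap B_r(x_0)|$ are continuous in $x_0$ because translation is continuous in $L^1$ (applied to $|\nabla U|^2\in L^1_{loc}$ and to $\ind_{\{|U|>0\}}\in L^1_{loc}$). The surface term $\int_{\partial B_r(x_0)}|U|^2\,d\HH^{d-1}$ is continuous in $x_0$ thanks to the Lipschitz continuity of $U$ furnished by Theorem~\ref{knownstuffmainopt0}$(iv)$, which makes $y\mapsto |U|^2(y)$ globally Lipschitz and bounded on a neighborhood of the sphere.

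Taking then $x_n\to x_0$ in $\partial\Omega$ and using the continuity of $\phi(U,\cdot,r)$ with $r\in(0,r_0)$ fixed, we pass to the limsup in
\[
\rho(x_n)\,\Lambda\omega_d\le \phi(U,x_n,r)+C_1 r,
\]
to get $\limsup_n\rho(x_n)\,\Lambda\omega_d\le \phi(U,x_0,r)+C_1 r$ for every $r\in(0,r_0)$. Sending $r\to 0^+$ yields $\limsup_n\rho(x_n)\le\rho(x_0)$, which is the desired upper semi-continuity and, together with the density gap, concludes the proof. The only minor technical point one has to check is the uniformity in $x_0$ of the constants in the Weiss monotonicity formula; this is automatic since the quasi-minimality constants $K,\eps$ in~\eqref{Uoptcond} and the Lipschitz bound on $U$ are intrinsic to $U$ and independent of the center $x_0$.
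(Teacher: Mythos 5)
Your proof is correct and follows essentially the same route as the paper: the paper itself first states the abstract principle that monotonicity in $r$ plus continuity in $x$ of $\phi(U,x,r)$ makes the density upper semi-continuous, and then combines this with the gap of Lemma~\ref{gap42}(iii); its written proof is just the contrapositive of your argument, run on a sequence of singular points converging to a regular one. Your added verification of the continuity of $x_0\mapsto\phi(U,x_0,r)$ (translation continuity in $L^1$ for the volume terms, Lipschitz continuity of $U$ for the spherical term) is a detail the paper asserts without proof, and it is correct.
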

\begin{proof}
Let $x_0\in Reg(\partial\Omega)=\Omega^{(1/2)}$. Suppose that there is a sequence $x_n\in Sing(\partial\Omega)=\partial\Omega\setminus \Omega^{(1/2)}$ such that $x_n\to x_0$. Let $U$ be the vector of the first $k$ eigenfunction on $\Omega$. We set $\gamma_n$ to be the limit
$$\gamma_n:=\frac{1}{\Lambda\omega_d}\lim_{r\to0}\phi(U,x_n,r).$$ 
Thus by Lemma \ref{gap42} (ii), $x_n\in\Omega^{(\gamma_n)}$. Since $\gamma_n\neq1/2$, by Lemma \ref{gap42} (iii) we have that $\gamma_n\ge 1/2+\delta$. By the monotonicity of the function $\psi_n(r):=\phi(U,x_n,r)+C_1 r$ (see Proposition \ref{mono_weiss}), we have that
$$\frac{\psi_n(r)}{\Lambda\omega_d}\ge \gamma_n\ge \frac12+\delta\ ,\quad\text{for every}\quad r>0.$$
On the other hand, fixing $r>0$, the function $x\mapsto \phi(U_0,x,r)$ is continuous and so 
$$\frac1{\Lambda\omega_d}\big(\phi(U,x_0,r)+C_1r\big)=\frac1{\Lambda\omega_d}\lim_{n\to\infty}\Big\{\phi(U,x_n,r)+C_1r\Big\}\ge \frac12+\delta.$$
Passing to the limit as $r\to0$ we obtain
$$\lim_{r\to0}\frac{\phi(U,x_0,r)}{\Lambda \omega_d}\ge \frac12+\delta,$$
which is in contradiction with the assumption $x_0\in\Omega^{(1/2)}$.
\end{proof}

\subsection{The regular part of the free boundary is Reifenberg flat}\label{sec:reifenberg}
In this section we prove the Reifenberg flatness of the regular set $Reg(\partial\Omega_U)$ defined in the previous subsection. We recall the definition of Reifenberg flatness below. For more details on the properties and the structure of the Reifenberg flat domains we refer to~\cite{kt1} and \cite{rtt}.

\begin{deff}[Reifenberg flat domains]
Let $\Omega\subset\R^d$ be an open set and let $0<\delta<1/2$, $R>0$. We say that $\Omega$ is a $(\delta,R)$-Reifenberg flat domain if:
\begin{enumerate}
\item For every $x\in\partial\Omega$ and every $0<r\le R$ there is a hyperplane $H=H_{x,r}$ containing $x$ such that 
$$\text{dist}_{\mathcal H}(B_r(x)\cap H,B_r(x)\cap\partial\Omega)<r \delta. $$
\item For every $x\in\partial\Omega$, one of the connected components of the open set $B_R(x)\cap\{x\ :\ \text{dist}(x,H_{x,R})>2\delta R\}$ is contained in $\Omega$, while the other one is contained in $\R^d\setminus\overline\Omega$.
\end{enumerate}
\end{deff}

\begin{oss}
We want to highlight here a difference between our approach and the one of Caffarelli, Shahgholian and Yeressian~\cite{csy}. In \cite[Theorem 5]{csy} it was proved that the entire positivity set $\{|U|>0\}$ is an $NTA$ domain (see Definition \ref{NTA}), which is a stronger result that can be obtained by applying the approach of \cite{agucafspr} to the first eigenfunction which in our case is strictly positive, Lipschitz continuous and non-degenerate. On the other hand this result is actually used only at the regular part of the free boundary, where it is a consequence of the Reifenberg flatness (see Theorem \ref{reifimplnta}).
\end{oss}

\begin{prop}\label{reiflatprop}
Suppose that $\Omega$ is a solution of \eqref{introP} and let $x_0\in  Reg(\partial\Omega)=\Omega^{(1/2)}$. Then $\Omega$ is Reifenberg flat in a neighborhood of $x_0$. 
\end{prop}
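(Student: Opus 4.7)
The plan is to combine the blow-up analysis from Section~\ref{sec:blowup} with the Weiss monotonicity formula and a contradiction argument. First I would show that at $x_0$ itself every blow-up limit is a half-plane solution. By Lemma~\ref{gap42}(ii), the fact that $x_0\in\Omega^{(1/2)}$ means $\lim_{r\to 0^+}\phi(U,x_0,r)=\Lambda\omega_d/2$. For any $U_0\in\mathcal{BU}_U(x_0)$, Lemma~\ref{Uinftyhom} and Lemma~\ref{Uinftyhom2} yield that $U_0=\xi|U_0|$ with $\xi\in\partial B_1\subset\R^k$ and $U_0$ is $1$-homogeneous, while Lemma~\ref{Uinftyopt2} says that $|U_0|$ is a $1$-homogeneous local minimizer of the Alt--Caffarelli functional. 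Hence the trace $|U_0|_{|\partial B_1}$ is a first spherical eigenfunction on $S:=\{|U_0|>0\}\cap\partial B_1$ with eigenvalue $d-1$. Since the density-$1/2$ condition forces $\HH^{d-1}(S)\le d\omega_d/2$, the sharp inequality in Remark~\ref{remsphere} gives $\lambda_1(S)=d-1$ and $S$ to be a half-sphere, so $\{|U_0|>0\}$ is a half-space. The condition $|\nabla|U_0||=\sqrt{\Lambda}$ on $\partial\{|U_0|>0\}$ (either directly from $|U_0|$ being an Alt--Caffarelli minimizer, or via Lemma~\ref{optvisc}) then determines $|U_0|(x)=\sqrt{\Lambda}(x\cdot\nu)_+$ for some unit $\nu$.

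To pass from classification at $x_0$ to uniform flatness in a neighborhood, I would argue by contradiction. Since $Reg(\partial\Omega)$ is open by Proposition~\ref{gapprop}, every boundary point $y$ near $x_0$ still satisfies $\lim_{r\to 0^+}\phi(U,y,r)=\Lambda\omega_d/2$. Suppose some $\delta_0>0$ were such that $\Omega$ fails to be $(\delta_0,r)$-Reifenberg flat at arbitrarily small scales arbitrarily close to $x_0$; then one can select $y_n\in\partial\Omega$ with $y_n\to x_0$ and $r_n\to 0$ for which the flatness condition at $(y_n,r_n)$ fails. By Proposition~\ref{blowupexistence} the pseudo-blow-ups $U_{r_n,y_n}$ converge along a subsequence to a limit $U_0$ strongly in $H^1_{loc}$, and the corresponding positivity sets converge in $L^1_{loc}$ and their boundaries in Hausdorff distance. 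Using the scaling identity $\phi(U_{r_n,y_n},0,\rho)=\phi(U,y_n,r_n\rho)$, the monotonicity of $r\mapsto\phi(U,y,r)+C_1 r$ from Proposition~\ref{mono_weiss}, and the fact that $\phi(U,y_n,0^+)=\Lambda\omega_d/2$ together with the continuity of $y\mapsto \phi(U,y,r_0)$ for fixed $r_0>0$, one shows that $\phi(U_0,0,\rho)=\Lambda\omega_d/2$ for every $\rho>0$. Proposition~\ref{mono_weiss_global}, applied to the local minimizer $U_0$ produced by Lemma~\ref{Uinftymin}, then yields that $U_0$ is $1$-homogeneous, and the argument of the first paragraph gives $|U_0|(x)=\sqrt{\Lambda}(x\cdot\nu_n)_+$ for some unit vector $\nu_n\in\R^d$.

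The contradiction is then immediate. The Hausdorff convergence $\partial\{|U_{r_n,y_n}|>0\}\cap B_1\to\{x\cdot\nu_n=0\}\cap B_1$ from Proposition~\ref{blowupexistence}(c) supplies the first Reifenberg condition at $(y_n,r_n)$ for $n$ large. For the second condition, the $L^1_{loc}$ convergence of the characteristic functions in Proposition~\ref{blowupexistence}(b), combined with the two-sided density estimate of Corollary~\ref{densestcor}, forces the two complementary slabs $\{\pm x\cdot\nu_n>2\delta_0\}\cap B_1$ to lie, respectively, inside $\{|U_{r_n,y_n}|>0\}$ and inside its complement for $n$ large. Both facts together contradict the failure of $(\delta_0,r_n)$-flatness at $(y_n,r_n)$, and hence prove Reifenberg flatness in a neighborhood of $x_0$.

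The main obstacle in this scheme is the \emph{uniformity} of the Weiss density along the moving centers $y_n\to x_0$. At an arbitrary boundary point of $\partial\Omega$ one only knows $\phi(U,y,0^+)\ge\Lambda\omega_d/2$, so without further information the pseudo-blow-up limits could a priori be higher-multiplicity $1$-homogeneous cones rather than half-planes. It is precisely the openness of $Reg(\partial\Omega)$ (Proposition~\ref{gapprop}) combined with the monotonicity of $\phi+C_1 r$ that upgrades the bound to an \emph{equality} $\phi(U,y,0^+)=\Lambda\omega_d/2$ throughout a full neighborhood, which is what closes the argument.
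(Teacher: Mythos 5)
Your proposal is correct and follows essentially the same route as the paper: a contradiction argument with pseudo-blow-ups at moving centers $y_n\to x_0$, upgrading the Weiss density to the constant $\Lambda\omega_d/2$ along the limit via the monotonicity of $\phi+C_1r$, the continuity of $\phi(U,\cdot,R)$ at a fixed scale, and the openness of $Reg(\partial\Omega)$, then classifying the limit as a half-plane solution and contradicting the failure of flatness through the convergence of the free boundaries in Proposition~\ref{blowupexistence}. Your explicit verification of the second (separation) Reifenberg condition via the density estimates is a small but welcome elaboration of a point the paper leaves to the Hausdorff convergence.
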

\begin{proof}
Fix $\delta>0$ to be chosen later. Suppose that $\Omega$ is not $(\delta,R)$-Reifenberg flat for any $R>0$. Then there are sequences $x_n\to x_0$ and $r_n\to0$ such that $\Omega$ is not $(\delta,r_n)$ flat in $B_{r_n}(x_n)$. Consider the blow-up sequence 
$$U_n(x):=U_{x_n,r_n}(x)=\frac{1}{r_n}U(x_n+xr_n).$$
By Proposition \ref{blowupexistence} and Lemma \ref{Uinftymin} we may assume that $U_n$ converges uniformly in $B_1$ to a function $U_0:\R^d\to\R^k$ which is a non-trivial local minimizer for $\mathcal F_0$. 
Let $\phi_n(r):=\phi(U_n,0,r)$ be the Weiss functional relative to $U_n$ defined in \eqref{phin}. Then we have :
\begin{itemize}
\item $\phi_n(r)=\phi(U,x_n,rr_n)$ and $\phi_n'(r)\ge -C_1r_n$, where $C_1$ is the constant from Proposition \ref{mono_weiss} ;
\item the limit $\ds\lim_{r\to 0}\phi_n(r)$ exists (see Proposition \ref{mono_weiss}) and by Lemma \ref{gap42} (ii) we have that
$$\frac1{\Lambda\omega_d}\lim_{r\to 0}\phi_n(r)=\lim_{r\to 0}\frac{|\Omega\cap B_{rr_n}(x_n)|}{|B_{rr_n}|}=\frac12\ ;$$
\item the limit $\ds\lim_{n\to \infty}\phi_n(r)$ exists and is given by the function $\phi_0(r):=\phi(U_0,0,r)$
which, for every $r_2>r_1>0$, satisfies (see Proposition \ref{mono_weiss_global})
\begin{equation}\label{monoXinfty44}
\phi_0(r_2)-\phi_0(r_1)=\int_{r_1}^{r_2}\frac{1}{r^{d+2}}\int_{\partial B_r} \left|x\cdot\nabla U_0-U_0\right|^2\,d\HH^{d-1}(x)\,dr.
\end{equation}
\end{itemize}
\emph{Step 1.} We claim that 
\begin{equation*}
\phi_0(r)=\frac{\Lambda\omega_d}2\quad\text{for every}\quad r>0.
\end{equation*}
We define $\psi_n(r)=\phi_n(r)+C_1r_nr=\phi(U,x_n,rr_n)+C_1r_nr$. In particular $\psi_n(r)$ is a non-decreasing function in $r$ such that $\ds\lim_{r\to0}\psi_n(r)=\frac{\Lambda\omega_d}2$.
We fix $\eps>0$ and let $R>0$ be such that $\ds\phi(U,x_0,R)+C_1R\le \frac{\Lambda\omega_d}2+\eps$ (such an $R$ exists since $\ds\lim_{r\to0}\phi(U,x_0,r)=\frac{\Lambda\omega_d}2$). Since 
$$\lim_{n\to\infty}\phi(U,x_n,R)=\phi(U,x_0,R),$$
and the function $r\mapsto \phi(U,x_n,r)+C_1r$ is non-decreasing, we have that for $n$ large enough
$$\frac{\Lambda\omega_d}{2}\le \phi(U,x_n,R)+C_1R\le\frac{\Lambda\omega_d}{2}+\eps.$$
Let $n$ be large enough such that $rr_n\le R$. Then we have that
$$\psi_n(r)=\phi(U,x_n,rr_n)+C_1rr_n\le \phi(U,x_n,R)+C_1R\le\frac{\Lambda\omega_d}{2}+\eps,$$
which proves that 
$$\lim_{n\to\infty}\psi_n(r)=\frac{\Lambda\omega_d}{2},$$
and, in particular, for every $r>0$ we have
$$\phi_0(r)=\lim_{n\to\infty}\phi_n(r)=\lim_{n\to\infty}\psi_n(r)=\frac{\Lambda\omega_d}{2},$$
which concludes the proof of Step 1. 

\emph{Step 2.} We now prove that, up to a rotation, $\{|U_0|>0\}=\{x_d>0\}$. We first notice that,  by \eqref{monoXinfty44}, $U_0$ is one-homogeneous. On the other hand $U_0$ is harmonic on $\Omega_0$ which gives that 
$$\frac12=\frac1{\Lambda\omega_d}\lim_{r\to0}\phi_0(r)=\lim_{r\to 0}\frac{|\Omega_0\cap B_r|}{|B_r|}.$$ 
Thus after a rotation of the coordinate axes necessarily $U_0(x)=\xi x_d^+$, for some vector $\xi\in\R^k$, which is non-zero due to Proposition \ref{blowupexistence}. In particular, we get that $\{|U_0|>0\}=\{x_d>0\}$. 

We now get the conclusion since, by Proposition~\ref{blowupexistence}, $\partial\Omega_n$ converges Hausdorff to $\{x_d=0\}$ and thus, for $n$ large enough, $\Omega_n$ is $(\delta,1)$ flat in the ball $B_1$, which is a contradiction with the initial assumption. 
\end{proof}

\subsection{The regular part of the free boundary is $C^{\infty}$}
In this last section we are finally in a position to prove our main result, Theorem~\ref{main}. For sake of simplicity we present the results in several steps, highlighting all the key points of our strategy.
First of all, in order to prove $C^{1,\alpha}$ regularity for the regular part of the boundary, we need first to introduce the notion of NTA, i.e.~\emph{non-tangentially accessible}, domains.
NTA domains were first introduced by Jerison and Kenig in the seminal paper~\cite{jk} in order to extend the boundary Harnack principle under minimal geometrical conditions, while Kenig and Toro~\cite{kt1} proved that a $(\delta,R)$-Reifenberg flat set (with $\delta$ sufficiently small) is also NTA. Roughly speaking, an NTA domain is such that every boundary point is accessible from inside and outside the domain by means of non-tangential balls. For sake of completeness, though we will just refer to the papers~\cite{jk,kt1} for the proofs and the details, we give the formal definition of NTA domain and the statements of the main Theorems.

\begin{deff}\label{NTA}
A bounded domain $\Om\subset \R^d$ is called NTA if there exist constants  $M>0$ and $r_0>0$, called NTA constants, such that 
\begin{enumerate}
\item $\Omega$ satisfies the \emph{corkscrew condition}, that is, given $x\in \partial \Om$ and $r\in(0,r_0)$, there exists $x_0\in\Om$ such that \[
M^{-1}r<dist(x_0,\partial \Om)<|x-x_0|<r,
\]
\item $\R^d\setminus \Omega$ satisfies the corkscrew condition,
\item If $w\in \partial \Om$ and $w_1,w_2\in B(w,r_0)\cap \Om$, then there is a rectifiable curve $\gamma\colon [0,1]\rightarrow \Om$ with $\gamma(0)=w_1$ and $\gamma(1)=w_2$ such that
\begin{enumerate}[(i)]
\item $\HH^1(\gamma([0,1]))\leq M|w_1-w_2|$,
\item $\min{\{\HH^1(\gamma([0,t])),\HH^1(\gamma([t,1]))\}}\leqq M dist(\gamma(t),\partial \Om)$, for every $t\in[0,1]$.
\end{enumerate}
\end{enumerate} 
\end{deff}

\begin{teo}[Reifenberg flat implies NTA, {\cite[Theorem 3.1]{kt1}}]\label{reifimplnta}
There exists a $\delta_0>0$ such that if $\Om\subset \R^d$ is a $(\delta, R)$-Reifenberg flat 
domain for $\delta <\delta_0$, then it is NTA.
\end{teo}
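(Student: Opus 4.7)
The plan is to verify the three clauses in Definition~\ref{NTA} one by one, isolating the role played by the smallness parameter $\delta_0$. The interior and exterior corkscrew conditions will follow almost immediately from the second clause of Reifenberg flatness: given $x\in\partial\Omega$ and $r\in(0,R)$, the hyperplane $H_{x,r}$ approximates $\partial\Omega\cap B_r(x)$ to Hausdorff distance $\delta r$, and $B_r(x)\cap\{y:\mathrm{dist}(y,H_{x,r})>2\delta r\}$ splits into two components, one inside $\Omega$ and one inside its complement. Taking $x_0:=x+(r/2)\nu$, with $\nu$ the unit normal pointing into the interior component, one finds $\mathrm{dist}(x_0,\partial\Omega)\ge r/2-\delta r\ge r/4$ as soon as $\delta\le 1/4$, which proves the corkscrew condition for $\Omega$ with $M=4$; flipping the sign of $\nu$ yields the same for $\R^d\setminus\overline\Omega$.

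The main obstacle is the Harnack chain condition, which I plan to handle by a dyadic multiscale construction. Given $w_1,w_2\in B(w,r_0)\cap\Omega$, set $\rho=|w_1-w_2|$ and $s=\min_i\mathrm{dist}(w_i,\partial\Omega)$. If $s\ge c\rho$ for a fixed dimensional $c$, one simply subdivides the segment $[w_1,w_2]$ into $O(1)$ balls of radius comparable to $s$; all of them remain in $\Omega$ because Reifenberg flatness at scale $C\rho$ places $\partial\Omega\cap B_{C\rho}(w_i)$ inside a thin slab around a hyperplane, which is far from both $w_i$. When $s\ll\rho$, I would first drag each endpoint towards the interior along the direction tangent to the relevant approximating hyperplane, choosing at each step a displacement comparable to the current distance to $\partial\Omega$; the second Reifenberg clause guarantees that the moved point stays in the good component of a twice-larger ball, and its distance to $\partial\Omega$ essentially doubles. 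Iterating $O(\log(\rho/s))$ times on the dyadic scales $s,2s,4s,\dots,\rho$ brings us back to the easy regime, and the resulting polygonal path has total length comparable to $\rho$, with the non-tangential estimate $\HH^1(\gamma|_{[0,t]})\le M\,\mathrm{dist}(\gamma(t),\partial\Omega)$ automatic at each dyadic step since step size and distance gained are comparable.

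The delicate quantitative point, and the place where the threshold $\delta_0$ is actually selected, is the control of the twist between consecutive approximating hyperplanes. A direct geometric computation shows that if $H_{x,r}$ and $H_{x,r/2}$ both approximate $\partial\Omega$ to within $\delta r$ in the common ball $B_{r/2}(x)$, then their unit normals must differ by an angle of size $O(\delta)$. Summed over $N\le\log_2(\rho/s)$ scales the cumulative angular drift is $O(N\delta)$, which one needs to keep below a fixed threshold (say $\pi/6$) so that the polygonal path, moving in the interior component at each individual scale, never crosses over to the exterior. This forces the smallness requirement on $\delta_0$ and fixes the domain of radii on which the Harnack chain construction is valid: the constants $M$ and $r_0$ in the NTA definition then depend only on the dimension and on $\delta_0$, which completes the verification of all three NTA axioms.
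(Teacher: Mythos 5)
The paper does not actually prove this statement: it is quoted from Kenig--Toro \cite[Theorem 3.1]{kt1}, and the authors explicitly defer to that reference for the proof. So the comparison is between your sketch and the standard argument. Your overall strategy (corkscrews from the separation clause, Harnack chains by pushing the endpoints inward dyadically and then joining at the top scale) is indeed the right one. Two small remarks before the main point: first, the paper's definition states the separation property only at the top scale $R$, so before invoking it at every scale $r\le R$ you must propagate it downward (a standard lemma using clause (1) and the smallness of $\delta$); second, ``along the direction tangent to the approximating hyperplane'' should read ``normal to'' --- a tangential displacement does not increase the distance to $\partial\Om$.

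The genuine gap is in your last paragraph, precisely where you claim $\delta_0$ is selected. You require the cumulative tilt $O(N\delta)$ of the approximating hyperplanes over the $N\approx\log_2(\rho/s)$ dyadic scales to stay below a fixed angle. But the ratio $\rho/s$ of $|w_1-w_2|$ to the distance of the endpoints from $\partial\Om$ is unbounded over admissible pairs, so no fixed $\delta_0$ satisfies $N\delta_0<\pi/6$ uniformly; as written, your argument yields no uniform threshold, which is the entire content of the theorem. The repair is that the cumulative tilt is irrelevant provided the construction is adaptive: at the $j$-th step you recompute the boundary point nearest to the current position $p_j$ and the hyperplane approximating $\partial\Om$ at the current scale $\approx 2^{j}s$ centered at that point, and push $p_j$ along that hyperplane's normal into the good component. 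Flatness and separation at that single scale alone guarantee that the new point lies in $\Om$ with its distance to $\partial\Om$ at least doubled, with constants depending only on one fixed small $\delta_0$; no information from earlier scales is carried forward, so nothing accumulates. (If instead you precompute all directions from hyperplanes centered at one fixed boundary point, the cumulative angle control really is needed and the construction fails --- this is the version your paragraph describes.) With the adaptive version, $\delta_0$, $M$ and $r_0$ come from single-scale estimates only, and the cigar condition $\min\{\HH^1(\gamma([0,t])),\HH^1(\gamma([t,1]))\}\le M\,\text{dist}(\gamma(t),\partial\Om)$ holds as you say because step length and distance gained are comparable at each dyadic stage.
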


It was proved in \cite{jk} that in any NTA domain $\Omega\subset\R^d$ the Boundary Harnack Principle does hold, that is, if $u$ and $v$ are positive harmonic functions in $\Omega$, vanishing on the boundary $\partial\Omega\cap B_r$, then  
\[
\frac{v}{u}\ \mbox{ is H\"older continuous on }\ \overline{\Om}\cap B_r.
\]
\noindent In our setting, there are two main differences. First of all our functions $u_i$, $i=1,\dots,k$ are not harmonic, but they  solve an eigenvalue problem\[
-\Delta u_i=\la_i u_i\quad\text{in}\quad\Omega,\qquad u_i=0\quad\text{on}\quad\partial\Omega,
\]
for some $\la_i>0$. On the other hand, we do not know whether in a neighborhood of a boundary point all the $u_i$ are positive or not; this is an information that we have only on $u_1$, thanks to the non-degeneracy properties (see Lemma~\ref{nondegu1}). The case of eigenfunctions was treated in \cite[Appendix A]{rtt}. Precisely, we have the following result.

\begin{lemma}[Boundary Harnack principle for the eigenfunctions on optimal sets]\label{ntaholder1}
Let $\Omega$ be a solution of \eqref{introP}, $U=(u_1,\dots,u_k)$ be the vector of the first $k$ eigenfunctions on $\Omega$ and $0\in \Om^{(1/2)}$. Then $\Om$ is an NTA domain in a neighborhood of $0$ and 
there exists $\beta>0$, depending only on the NTA constants, such that for all $i=2,\dots, k$ 
\[
\frac{u_i}{u_1}\ \mbox{is H\"older continuous of order }\beta \mbox{ on }\overline{\Om}\cap B_r.
\]
In particular, for every $x_0\in \Om^{(1/2)}\cap B_r$, the limit 
$$\ \ds g_i(x_0):=\lim_{\Omega\ni x\to x_0}\frac{u_i(x)}{u_1(x)},\ $$  
exists and $g_i:B_r\cap\partial\Omega\to\R$ is an $\beta$-H\"older continuous function. 
\end{lemma}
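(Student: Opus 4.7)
The proof splits into two parts: establishing the NTA property from the regularity of the free boundary at $0$, and then deriving the boundary Harnack inequality for the eigenfunction ratios.

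\emph{Part 1 (NTA property).} Since $0\in\Omega^{(1/2)}=Reg(\partial\Omega)$, Proposition~\ref{reiflatprop} gives that for every $\delta>0$ there exists $R_\delta>0$ such that $\Omega$ is $(\delta,R_\delta)$-Reifenberg flat in a neighborhood of $0$. Choosing $\delta$ smaller than the constant $\delta_0$ of Theorem~\ref{reifimplnta}, we conclude that $\Omega$ is NTA in a neighborhood of $0$, with NTA constants depending only on $d$ and $\delta_0$.

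\emph{Part 2 (Boundary Harnack).} By Corollary~\ref{mainconnected}, $\Omega$ is connected, so the first eigenfunction satisfies $u_1>0$ throughout $\Omega$. By Lemma~\ref{nondegu1}, there is $C>0$ with $|u_i|\le |U|\le Cu_1$ in $\Omega$ for every $i=1,\dots,k$. Fix $i\ge 2$ and any $M>C$, and introduce
\begin{equation*}
v_i^\pm := Mu_1\pm u_i,
\end{equation*}
which are strictly positive on $\Omega\cap B_r$ and vanish continuously on $\partial\Omega\cap B_r$. A direct computation gives
\begin{equation*}
-\Delta v_i^\pm = M\lambda_1 u_1\pm \lambda_i u_i = c_i^\pm(x)\,v_i^\pm\quad\text{in}\quad\Omega,
\end{equation*}
where $c_i^\pm := (M\lambda_1 u_1\pm\lambda_i u_i)/v_i^\pm\in L^\infty(\Omega)$ is uniformly bounded in terms of $\lambda_1,\lambda_i,M$ and $C$. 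Applying the boundary Harnack principle for positive solutions of uniformly elliptic equations with bounded zero-order coefficient in NTA domains (the harmonic case being \cite{jk}; the eigenfunction adaptation is carried out in the appendix of \cite{rtt}) to the pairs $(v_i^\pm,u_1)$, we obtain that the ratios $v_i^\pm/u_1$ are H\"older continuous on $\overline\Omega\cap B_{r/2}$ with an exponent $\beta>0$ depending only on the NTA constants. Consequently,
\begin{equation*}
\frac{u_i}{u_1} = \frac{v_i^+}{u_1}-M
\end{equation*}
is $\beta$-H\"older continuous on $\overline\Omega\cap B_{r/2}$, and the continuous extension to $\partial\Omega\cap B_{r/2}$ is the function $g_i$ of the statement.

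\emph{Main obstacle.} The core difficulty is that for $i\ge 2$ the function $u_i$ may change sign, so the classical Jerison--Kenig boundary Harnack cannot be applied directly to the pair $(u_i,u_1)$. The decisive reduction is to trade $u_i$ for the two \emph{positive} linear combinations $Mu_1\pm u_i$; this is possible precisely because of the pointwise comparison $|u_i|\le Cu_1$ from Lemma~\ref{nondegu1}, which in turn rests on the non-degeneracy of $|\nabla U|$ at the free boundary and on the positivity of the first eigenfunction on the connected set $\Omega$. Once positivity is restored, one is in the classical setting of boundary Harnack for positive solutions of linear elliptic equations in NTA domains.
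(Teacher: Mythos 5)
Your proof is correct and follows the same overall route as the paper: Proposition~\ref{reiflatprop} gives Reifenberg flatness at the regular point, Theorem~\ref{reifimplnta} upgrades this to the NTA property, and the conclusion is a boundary Harnack principle for eigenfunctions. The one place where you genuinely add content is Part~2: the paper disposes of this step in one line by citing \cite[Lemmas A.2 and A.3]{rtt}, which already treat the quotient of a (possibly sign-changing) eigenfunction by a positive one, whereas you reconstruct that reduction explicitly via the positive combinations $Mu_1\pm u_i$, justified by the domination $|u_i|\le|U|\le Cu_1$ of Lemma~\ref{nondegu1} and the positivity of $u_1$ on the connected set $\Omega$ (Corollary~\ref{mainconnected}). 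This is the standard and correct way to pass from the positive to the sign-changing case; the only point to watch is that your rewriting $-\Delta v_i^{\pm}=c_i^{\pm}v_i^{\pm}$ produces a merely bounded, variable zero-order coefficient, so the boundary Harnack principle you invoke must be the version valid for such operators (the statements in \cite[Appendix A]{rtt} are phrased for constant $\lambda$, but their proofs only use an $L^\infty$ bound on the potential), or equivalently one should treat $v_i^{\pm}$ as positive functions satisfying $|\Delta v_i^{\pm}|\le C'v_i^{\pm}$. With that understood, your argument is a self-contained substitute for the paper's citation.
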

\begin{proof}
By Proposition \ref{gapprop} and Proposition \ref{reiflatprop} we have that  $\partial\Omega=\Omega^{(1/2)}$ and $\Omega$ is Reifenberg flat in a sufficiently small ball $B_r$. The claim follows by \cite[Lemma A.2]{rtt} and \cite[Lemma A.3]{rtt}.
\end{proof}

In the following lemma we show that the first eigenfunction on an optimal set $\Omega$ is a solution of a one-phase free boundary problem. 

\begin{lemma}\label{optu1}
Let $\Omega$ be an optimal set for \eqref{introP} and let $u_1$ be the first eigenfunction on $\Omega$. Then, for every $x_0\in Reg(\partial\Omega)$ there is a radius $r>0$, a constant $0<c_0\le 1$ and a H\"older continuous function $g:B_{r}(x_0)\cap\partial\Omega\to [c_0,1]$ such that $u_1$ is a viscosity solution to the problem
\begin{equation*}
-\Delta u_1=\la_1(\Omega) u_1\quad\mbox{in}\quad\Om\ , \qquad u_1=0\quad \mbox{on}\quad\partial \Om\ ,\qquad |\nabla u_1|=g\sqrt\Lambda \quad \mbox{on}\quad B_{r}(x_0)\cap\partial\Omega.
\end{equation*}
\end{lemma}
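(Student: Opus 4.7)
The strategy is to exploit the boundary Harnack principle at regular points in order to show that, near such points, all the eigenfunctions $u_i$ ($i\ge 2$) are proportional to $u_1$ up to a H\"older continuous error; this allows us to transfer the viscosity identity $|\nabla|U||=\sqrt\Lambda$ (Lemma~\ref{optvisc}) into a scalar condition on $u_1$ with a H\"older coefficient depending only on the traces of the ratios $u_i/u_1$.

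\smallskip
Fix $x_0\in Reg(\partial\Omega)=\Omega^{(1/2)}$. By Proposition~\ref{gapprop}, $Reg(\partial\Omega)$ is open in $\partial\Omega$, hence for some $r>0$ the whole of $B_r(x_0)\cap\partial\Omega$ is regular. Proposition~\ref{reiflatprop} combined with Theorem~\ref{reifimplnta} shows that $\Omega$ is NTA in $B_r(x_0)$ (after possibly shrinking $r$). Lemma~\ref{ntaholder1} then provides, for every $i=2,\dots,k$, a H\"older continuous extension
$$g_i:B_r(x_0)\cap\overline{\Omega}\to\R,\qquad g_i(y)=\lim_{\Omega\ni x\to y}\frac{u_i(x)}{u_1(x)}\text{ on }B_r(x_0)\cap\partial\Omega,$$
of order $\beta\in(0,1)$. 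Since these functions are bounded on this set, we may define
$$g(y):=\Bigl(1+\sum_{i=2}^k g_i(y)^2\Bigr)^{-1/2},$$
which is H\"older continuous on $B_r(x_0)\cap\partial\Omega$ with values in $[c_0,1]$, for a suitable $c_0>0$.

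\smallskip
The PDE $-\Delta u_1=\lambda_1(\Omega)u_1$ in $\Omega$ and $u_1=0$ on $\partial\Omega$ are immediate from the definition of $u_1$ as an eigenfunction together with its Lipschitz continuity (Theorem~\ref{knownstuffmainopt0}\,(iv)). For the viscosity condition on the gradient, fix $y_0\in B_r(x_0)\cap\partial\Omega$ and a test function $\varphi$ touching $u_1$ from below at $y_0$, with $\varphi(y_0)=0$. Introduce the blow-up sequences $u_1^{(\rho)}(x):=\rho^{-1}u_1(y_0+\rho x)$, $U^{(\rho)}(x):=\rho^{-1}U(y_0+\rho x)$ and $\varphi^{(\rho)}(x):=\rho^{-1}\varphi(y_0+\rho x)$. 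By Proposition~\ref{blowupexistence} and Lemma~\ref{Uinftyhom2}, any subsequential limit of $U^{(\rho)}$ is of the form $U_0=\xi|U_0|$ with $\xi\in\partial B_1\subset\R^k$; since $y_0$ has density $1/2$, the argument used in Step~1 of Lemma~\ref{optvisc} and in Step~2 of Proposition~\ref{reiflatprop} forces $|U_0|=\sqrt\Lambda\,(x\cdot\nu)^+$ for some unit $\nu\in\R^d$. Using the identity $u_i^{(\rho)}(x)=g_i(y_0+\rho x)\,u_1^{(\rho)}(x)$ (valid in $\Omega^{(\rho)}$) together with the continuity of $g_i$ at $y_0$, we obtain $u_i^{(\rho)}\to g_i(y_0)u_1^{(0)}$ locally uniformly, where $u_1^{(0)}$ is the limit of $u_1^{(\rho)}$. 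Matching the modulus gives
$$|U_0|=\Bigl(1+\sum_{i=2}^k g_i(y_0)^2\Bigr)^{1/2}u_1^{(0)}=\frac{u_1^{(0)}}{g(y_0)},$$
so that $u_1^{(0)}(x)=g(y_0)\sqrt\Lambda\,(x\cdot\nu)^+$. Since $\varphi^{(\rho)}\to\nabla\varphi(y_0)\cdot x$ uniformly on compacta and $u_1^{(\rho)}\ge\varphi^{(\rho)}$ near $0$, passing to the limit yields
$$\nabla\varphi(y_0)\cdot x\le g(y_0)\sqrt\Lambda\,(x\cdot\nu)^+\quad\text{for every }x\in\R^d,$$
which enforces $\nabla\varphi(y_0)=a\nu$ with $0\le a\le g(y_0)\sqrt\Lambda$, hence $|\nabla\varphi(y_0)|\le g(y_0)\sqrt\Lambda$.

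\smallskip
The touching-from-above case is symmetric: the non-degeneracy of $u_1$ at $y_0$, provided by Lemma~\ref{nondegu1} combined with the non-degeneracy of $|U|$ (Proposition~\ref{prop:nondeg}), guarantees that $u_1^{(0)}$ is non-trivial, so the blow-up analysis yields the same shape $u_1^{(0)}=g(y_0)\sqrt\Lambda(x\cdot\nu)^+$; the reversed inequality $|\nabla\varphi(y_0)|\ge g(y_0)\sqrt\Lambda$ follows by evaluating $g(y_0)\sqrt\Lambda(x\cdot\nu)^+\le\nabla\varphi(y_0)\cdot x$ at $x=\nu$. The delicate point of the argument — and the reason the boundary Harnack result of Lemma~\ref{ntaholder1} is indispensable — is precisely the passage from the vectorial viscosity identity to a scalar one: without the H\"older continuity of the ratios $u_i/u_1$ we would not be able to identify the blow-up of $u_1$ from that of $|U|$, nor to read off the explicit coefficient $g$ of the resulting one-phase free boundary condition.
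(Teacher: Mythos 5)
Your proof is correct, and its first half (using Proposition~\ref{gapprop}, Proposition~\ref{reiflatprop}, Theorem~\ref{reifimplnta} and Lemma~\ref{ntaholder1} to produce the H\"older ratios $g_i$ and setting $g=(1+\sum_{i\ge2}g_i^2)^{-1/2}$, bounded below via Lemma~\ref{nondegu1}) is exactly what the paper does. Where you diverge is in the verification of the viscosity condition. The paper does not redo any blow-up analysis: given a test function $\varphi$ touching $u_1$ at $x_0$, it multiplies $\varphi$ by the H\"older barrier $\frac{1}{g(x_0)}-C|x-x_0|^\gamma$ (which lies below $1/g(x)$ near $x_0$) to manufacture a test function $\psi$ touching $|U|=u_1/g$ at $x_0$ with $|\nabla\psi(x_0)|=|\nabla\varphi(x_0)|/g(x_0)$, and then invokes Lemma~\ref{optvisc} as a black box. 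You instead re-run the blow-up classification at the touching point: you identify $|U_0|=\sqrt\Lambda(x\cdot\nu)^+$ from the density-$1/2$ information, use the continuity of the ratios $g_i$ to deduce that the blow-up of $u_1$ is $g(y_0)\sqrt\Lambda(x\cdot\nu)^+$, and compare it with the linearization of $\varphi$. Both arguments rest on the same input (the boundary Harnack ratios and the classification of blow-ups at regular points); the paper's test-function transfer is shorter and keeps Lemma~\ref{optvisc} as the only place where blow-ups are classified, while your route is somewhat longer but makes the geometric content explicit, namely that at a regular point the blow-up of $u_1$ itself is the half-plane solution with slope $g(y_0)\sqrt\Lambda$, which is in effect a pointwise restatement of the conclusion. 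The small steps you leave implicit (locally uniform convergence of $\varphi^{(\rho)}$ to its linearization, the upgrade of the limiting inequality from a neighborhood of the origin to all of $\R^d$ by homogeneity, and the non-triviality of $u_1^{(0)}$ in the touching-from-above case) are all easily justified from the results you cite, so there is no gap.
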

\begin{proof}
Let $x_0=0$ and $U=(u_1,\dots,u_k)$ be the vector of the first $k$ eigenfunctions on $\Omega$. Let $r>0$ be the radius and $g_i:B_r\cap\partial\Omega\to\R$, for $i=2,\dots,k$ be the H\"older continuous functions from Lemma \ref{ntaholder1}. Then we have 
$$u_i=g_i u_1\quad\text{on}\quad B_r\cap\overline\Omega\qquad\text{and}\qquad u_1=g|U|\quad\text{on}\quad B_r\cap\overline\Omega,$$
where we have set 
$$g:=\frac1{\sqrt{1+g_2^2+\dots+g_k^2}}.$$
We notice that $g$ is a $\beta$-H\"older continuous function on $\overline\Omega\cap B_r$ for some $\beta>0$ and is such that $c_0\le g\le 1$, where $c_0=1/C$ and $C$ is the constant from Lemma \ref{nondegu1}. Suppose now that the function $\varphi\in C^1(\R^d)$ is touching $u_1$ from below (see Definition \ref{viscoptimality}) in a point $x_0\in\partial\Omega\cap B_r$. For $\rho$ small enough, there is a constant $C>0$ such that  
$$\frac1{g(x)}\ge \frac1{g(x_0)}-C|x-x_0|^\gamma\ge 0\quad\text{for every}\quad x\in \overline\Omega\cap B_{\rho}(x_0),$$
and so, setting $\psi(x)=\varphi(x)\big(\frac1{g(x_0)}-C|x-x_0|^\gamma\big)$, we get that $\psi(x_0)=|U|(x_0)$ and 
$$\psi(x)\le u_1(x)\left(\frac1{g(x_0)}-C|x-x_0|^\gamma\right)\le |U|(x)\quad\text{for every}\quad x\in \overline\Omega\cap B_{\rho}(x_0),$$
that is in the ball $B_{\rho}(x_0)$ we have that $\psi$ touches $|U|$ from below in $x_0$. On the other hand, $\psi$ is differentiable in $x_0$ and $|\nabla \psi(x_0)|=\frac1{g(x_0)}|\nabla \varphi(x_0)|$. Since $U$ is a viscosity solution of \eqref{viscsolUtion} we get that 
$$\sqrt\Lambda\ge |\nabla \psi(x_0)|=\frac1{g(x_0)}|\nabla \varphi(x_0)|,$$
which gives the claim, the case when $\varphi$ touches $u_1$ from below being analogous.
\end{proof}

Now the regularity of $Reg(\partial\Omega)$ follows by the already known results on the regularity of the one-phase free boundaries (see \cite{desilva} and the references therein).
\begin{prop}\label{regularityfinal}
Let $\Omega$ be a solution of \eqref{introP}. Then $Reg(\partial\Omega)=\Omega^{(1/2)}$ is locally a graph of a $C^{1,\alpha}$ function.
\end{prop}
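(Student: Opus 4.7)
The strategy is to apply the flatness-implies-$C^{1,\alpha}$ theorem of De Silva \cite{desilva} to the first eigenfunction $u_1$ in a small ball around each regular boundary point $x_0 \in \Omega^{(1/2)}$. By Lemma \ref{optu1}, there is a radius $r>0$ such that $u_1$ is a viscosity solution of the scalar one-phase problem
\begin{equation*}
-\Delta u_1 = \lambda_1(\Omega)\, u_1 \ \text{in}\ \Omega,\qquad u_1 = 0 \ \text{on}\ \partial\Omega,\qquad |\nabla u_1| = g\sqrt{\Lambda} \ \text{on}\ B_r(x_0)\cap\partial\Omega,
\end{equation*}
where $g\in C^{0,\beta}(\overline\Omega\cap B_r(x_0))$ takes values in $[c_0,1]$ with $c_0>0$. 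Since De Silva's result is local and perturbative, the whole proof reduces to verifying that $u_1$ is $\varepsilon$-flat at some small scale around $x_0$, with $\varepsilon$ as small as required by the theorem.

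To produce this flatness, the plan is to use the blow-up analysis of Section \ref{sec:blowup}. By Proposition \ref{blowup}, every $U_0\in\mathcal{BU}_U(x_0)$ has the form $U_0 = \xi|U_0|$ with $|U_0|$ a $1$-homogeneous local minimizer of the scalar Alt--Caffarelli functional. The density assumption $x_0\in\Omega^{(1/2)}$ combined with Lemma \ref{gap42}$(ii)$ gives the Weiss limit $\tfrac{1}{\Lambda\omega_d}\lim_{r\to 0}\phi(U,x_0,r) = \tfrac{1}{2}$, and then, exactly as in Step 2 of the proof of Proposition \ref{reiflatprop}, the classification of $1$-homogeneous minimizers on cones of density $1/2$ (Remark \ref{remsphere}) forces $\{|U_0|>0\} = \{y\cdot\nu>0\}$ for some $\nu\in\partial B_1$ and $|U_0|(y) = \sqrt{\Lambda}\,(y\cdot\nu)^+$. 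Since by the proof of Lemma \ref{optu1} we have $u_1 = g|U|$ on $\overline{\Omega}\cap B_r(x_0)$ with $g$ H\"older continuous up to $\partial\Omega$ (Lemma \ref{ntaholder1}), the rescalings satisfy
\begin{equation*}
\frac{1}{r_n}u_1(x_0+r_n y)\ \longrightarrow\ \sqrt{\Lambda}\,g(x_0)\,(y\cdot\nu)^+\qquad\text{locally uniformly in}\ \R^d,
\end{equation*}
along any blow-up sequence $r_n\to 0$ realizing $U_0$. Hence, for every $\varepsilon>0$, one can pick $n$ large so that $u_1$ is $\varepsilon$-flat in the direction $\nu$ inside $B_{r_n}(x_0)$, in the sense of \cite{desilva}.

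Fixing $\varepsilon$ small enough, depending only on $d$, on the H\"older norm of $g$, on $c_0$ and on $\lambda_1(\Omega)$, the flatness theorem of \cite{desilva} yields that $\partial\Omega$ is a $C^{1,\alpha}$-graph in some smaller ball $B_\rho(x_0)$. Since $x_0\in\Omega^{(1/2)}$ was arbitrary and $Reg(\partial\Omega)$ is relatively open in $\partial\Omega$ by Proposition \ref{gapprop}, this local regularity covers all of $Reg(\partial\Omega)$.

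The principal obstacle is the passage from the vector-valued problem to a genuine scalar one-phase problem. The viscosity optimality condition coming from the variational principle (Lemma \ref{optvisc}) is on $|U|$, not on the scalar $u_1$; turning it into a viscosity one-phase condition for $u_1$ with a H\"older right-hand side is precisely the content of Lemma \ref{optu1}, which relies on the boundary Harnack principle of Lemma \ref{ntaholder1}, which in turn depends on the Reifenberg flatness established in Proposition \ref{reiflatprop}. Once this reduction is in place, invoking De Silva's theorem on the scalar problem for $u_1$ is routine.
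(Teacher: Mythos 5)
Your proposal is correct and follows essentially the same route as the paper, whose entire proof is the one-line reduction ``Lemma~\ref{optu1} plus \cite[Theorem 1.1]{desilva}''. The only difference is that you explicitly verify the $\eps$-flatness hypothesis of De Silva's theorem via the blow-up classification and the Hausdorff convergence of the free boundaries from Proposition~\ref{blowupexistence}, a detail the paper leaves implicit (it is checked in the same way inside the proof of Lemma~\ref{gap}); this is a legitimate and welcome filling-in, not a deviation.
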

\begin{proof}
In view of Lemma \ref{optu1} the claim follows by \cite[Theorem 1.1]{desilva}. 
\end{proof}

In order to pass from $C^{1,\alpha}$ to $C^{\infty}$ we need an improved boundary Harnack principle, as it was proved by De Silva and Savin~\cite{dss} for harmonic functions. The extension to eigenfunctions can be done as in~\cite[Appendix A]{rtt}.

\begin{lemma}[Improved boundary Harnack principle]\label{imprbh}
Let $\Omega$ be a solution of \eqref{introP}, $U=(u_1,\dots,u_k)$ be the vector of the first $k$ eigenfunctions on $\Omega$ and $0\in Reg(\partial \Om)$.
There exists $R_0<1/2$ such that, if for $r<R_0$, $Reg(\partial \Om)\cap B_r$ is of class $C^{k,\alpha}$ for $k\geq 1$, then for all $i=2,\dots, k$ we have 
\[
\frac{u_i}{u_1}\ \mbox{is of class }C^{k,\alpha} \mbox{ on }\overline{\Om}\cap B_r.
\] 
In particular, for every $x_0\in Reg(\partial \Om)\cap B_r$, the limit 
$$\ \ds g_i(x_0):=\lim_{\Omega\ni x\to x_0}\frac{u_i(x)}{u_1(x)},\ $$  
exists and $g_i:B_r\cap\partial\Omega\to\R$ is a $C^{k,\alpha}$ function. 
\end{lemma}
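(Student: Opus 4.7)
The plan parallels \cite[Appendix A]{rtt}, with the classical Jerison-Kenig boundary Harnack (used there to prove Lemma \ref{ntaholder1}) replaced by the improved version of De Silva-Savin \cite{dss}. Fix $0 \in Reg(\partial\Omega)$ and choose $r < R_0$ so small that $\partial\Omega \cap B_r$ is a $C^{k,\alpha}$ graph; set $D := \Omega \cap B_r$. For $j \in \{1,i\}$ define the harmonic replacement $h_j$ in $D$ by $h_j = u_j$ on $\partial B_r \cap \Omega$ and $h_j = 0$ on $\partial\Omega \cap B_r$. The remainder $r_j := u_j - h_j$ solves $-\Delta r_j = \lambda_j u_j$ in $D$ with zero boundary values on $\partial D$, and since $u_j$ is Lipschitz while $\partial\Omega \cap B_r$ is $C^{k,\alpha}$, boundary Schauder estimates yield $r_j \in C^{k+1,\alpha}$ locally in $D \cup (\partial\Omega \cap B_{r/2})$; in particular $r_j$ vanishes to second order on $\partial\Omega \cap B_{r/2}$.

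The maximum principle gives $h_1 > 0$ in $D$, and by Hopf combined with the extremality condition of Lemma \ref{optvisc} we also have $\partial_\nu h_1 > 0$ on $\partial\Omega \cap B_{r/2}$. The improved boundary Harnack principle of \cite{dss} applied to the pair $(h_1, h_i)$ (or to $(h_1, h_1 \pm h_i/\|h_i\|_{L^\infty})$ to enforce sign conditions) gives $h_i/h_1 \in C^{k,\alpha}$ up to $\partial\Omega \cap B_{r/2}$. To pass from $h_i/h_1$ to $u_i/u_1$ one must control the remainders. In flattened coordinates near $\partial\Omega \cap B_{r/2}$, write $h_j = x_d H_j$ (with $H_1 > 0$ continuous) and $r_j = x_d^2 R_j$ (with $R_j \in C^{k-1,\alpha}$). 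Then
\[
\frac{u_i}{u_1} = \frac{h_i}{h_1} \cdot \frac{1 + x_d R_i/H_i}{1 + x_d R_1/H_1},
\]
and although the ratios $x_d R_j/H_j$ are a priori only $C^{k-1,\alpha}$, one iterates the harmonic-replacement decomposition on $r_j$ itself (producing a new harmonic piece $h_j'$ and a $C^{k+2,\alpha}$ remainder vanishing to third order, and so on). This yields a telescoping expansion $u_j = h_j + h_j' + h_j'' + \dots$ in which applying \cite{dss} to each harmonic pair $(h_1^{(\ell)}, h_i^{(\ell)})$ produces a series whose partial sums converge to $u_i/u_1$ in the $C^{k,\alpha}$ norm up to $\partial\Omega \cap B_{r/2}$.

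The principal difficulty is the second step, namely the improved $C^{k,\alpha}$ regularity for the ratio of two harmonic functions vanishing on a $C^{k,\alpha}$ boundary; this is precisely the content of \cite{dss}. The main subtlety in the extension to eigenfunctions is that a single harmonic replacement is not enough: dividing $r_j = x_d^2 R_j$ by $h_j = x_d H_j$ loses one derivative, producing only $C^{k-1,\alpha}$ regularity of the naive correction. Iterating the decomposition to extract higher-order-vanishing remainders restores the missing derivative. Alternatively, one may work directly with the divergence-form equation $\div(u_1^2 \nabla w) = (\lambda_1-\lambda_i) u_1^2 w$ satisfied by $w = u_i/u_1$: since $u_1 \asymp \mathrm{dist}(\cdot, \partial\Omega)$ on $Reg(\partial\Omega)$ by the Hopf-type structure coming from Lemma \ref{optvisc}, this is a degenerate elliptic equation of Fuchsian type whose Schauder theory (which is essentially the analytic core of \cite{dss}) recovers the optimal $C^{k,\alpha}$ estimate directly.
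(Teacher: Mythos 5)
Your main construction has a genuine gap at the point where you assert that the remainder $r_j=u_j-h_j$ vanishes to second order on $\partial\Omega\cap B_{r/2}$ and is $C^{k+1,\alpha}$ up to it. Neither claim holds. First, $r_j$ solves $-\Delta r_j=\lambda_j u_j$ in $D$ with $r_j=0$ on $\partial D$; for $j=1$ the right-hand side is strictly positive, so $r_1>0$ in $D$ and Hopf's lemma forces $\partial_\nu r_1>0$ on the free boundary --- the remainder vanishes only to \emph{first} order, exactly like $h_1$ itself, and the ansatz $r_j=x_d^2R_j$ is false. Second, boundary Schauder estimates on a domain whose boundary is only $C^{k,\alpha}$ cannot produce $C^{k+1,\alpha}$ regularity up to that boundary. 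With both inputs gone, the telescoping expansion and its claimed convergence in $C^{k,\alpha}$ are unsupported. Your closing alternative --- the identity $\mathrm{div}(u_1^2\nabla w)=(\lambda_1-\lambda_i)u_1^2w$ for $w=u_i/u_1$ --- is correct but degenerate, since $u_1\sim\mathrm{dist}(\cdot,\partial\Omega)$ near the regular boundary; a Schauder theory for that Fuchsian operator is not what \cite{dss} supplies off the shelf, so invoking it as ``essentially the analytic core of \cite{dss}'' does not close the argument.

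The paper's proof avoids all of this with one observation: take $\varphi_0\ge0$ nontrivial with $-\Delta\varphi_0=\lambda_1(\Omega)\varphi_0$ in $B_{3R_0}$ and $\varphi_0=0$ on $\partial B_{3R_0}$; then $\varphi_0$ is smooth and bounded away from zero on $\overline{B}_{2R_0}$, and the quotients $u=u_1/\varphi_0$, $v=u_i/\varphi_0$ satisfy $\mathrm{div}(\varphi_0^2\nabla u)=0$ and $\mathrm{div}(\varphi_0^2\nabla v)=(\lambda_i(\Omega)-\lambda_1(\Omega))u_i\varphi_0$ in $\Omega\cap B_{2R_0}$. These are uniformly elliptic divergence-form equations with a smooth, non-degenerate coefficient and controlled right-hand side, both unknowns vanish on the free boundary, and $u>0$; hence \cite[Theorem~2.4]{dss} (for $k=1$) and \cite[Theorem~3.1]{dss} (for $k\ge2$) apply directly to the pair $(u,v)$ and give $u_i/u_1=v/u\in C^{k,\alpha}$. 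No harmonic replacement, iteration, or degenerate Schauder theory is needed. If you want to salvage your approach, replace the harmonic replacement by this division by a fixed positive eigenfunction of a slightly larger ball.
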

\begin{proof}
In order to get the claim, it is enough to apply~\cite[Theorem~2.4]{dss} for the case $k=1$ and~\cite[Theorem~3.1]{dss} for the case $k\geq 2$ to the functions $u=u_1/\varphi_0$ and $v=u_i/\varphi_0$, for all $i=2,\dots, k$, for a suitable $\varphi_0$ chosen following the ideas of~\cite[Lemma~A.2]{rtt}.
More precisely, we take $R_0>0$ such that there exists $\varphi_0\geq0$ a nontrivial function satisfying \[
-\Delta \varphi_0=\la_1(\Om)\varphi_0,\quad \mbox{in }B_{3R_0},\qquad \varphi_0=0\;\mbox{on }\partial B_{3R_0}.
\] 
Then $\varphi_0>0$ in $\overline B_{2R_0}$ and we have that $u_1/\varphi_0$ and $u_i/\varphi_0$ solve the equation \[
\div \left(\varphi_0^2\nabla (\frac{u_1}{\varphi_0})\right)=0,\qquad \div \left(\varphi_0^2\nabla (\frac{u_i}{\varphi_0})\right)=(\la_i(\Om)-\la_1(\Om))u_i\varphi_0\qquad \mbox{ in } B_{2R_0}\cap Reg(\partial \Om).
\]
\end{proof}

At this point we are in position to prove the full regularity of $Reg(\partial \Om)$.

\begin{prop}\label{regularityfinalinfty}
Let $\Omega$ be a solution of \eqref{introP}. Then $Reg(\partial\Omega)=\Omega^{(1/2)}$ is locally a graph of a $C^{\infty}$ function.
\end{prop}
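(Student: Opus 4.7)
The strategy is a bootstrap argument based on the one-phase reformulation of Lemma~\ref{optu1} combined with the improved boundary Harnack principle of Lemma~\ref{imprbh}. We already know from Proposition~\ref{regularityfinal} that $Reg(\partial\Omega)$ is locally the graph of a $C^{1,\alpha}$ function; it suffices to show that if $Reg(\partial\Omega)\cap B_r$ is of class $C^{k,\alpha}$ for some $k\ge 1$, then it is of class $C^{k+1,\alpha}$ in a possibly smaller ball, and then iterate.

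Fix $x_0\in Reg(\partial\Omega)$ and let $B_r(x_0)$ be a ball in which the free boundary is of class $C^{k,\alpha}$. Applying Lemma~\ref{imprbh}, the ratios $g_i=u_i/u_1$ extend to $C^{k,\alpha}$ functions on $\overline\Omega\cap B_r$, and therefore the function
$$g=\frac{1}{\sqrt{1+g_2^2+\dots+g_k^2}}$$
appearing in Lemma~\ref{optu1} is itself $C^{k,\alpha}$ on $\overline\Omega\cap B_r$ and satisfies $c_0\le g\le 1$. Consequently, in $B_r(x_0)$ the first eigenfunction $u_1$ is a viscosity solution of the one-phase free boundary problem
$$-\Delta u_1=\lambda_1(\Omega)u_1\ \text{in}\ \Omega,\qquad u_1=0\ \text{on}\ \partial\Omega,\qquad |\nabla u_1|=g\sqrt\Lambda\ \text{on}\ Reg(\partial\Omega)\cap B_r,$$
with a scalar, strictly positive, $C^{k,\alpha}$ coefficient $g$ and a smooth right-hand side in the interior equation.

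At this stage we invoke the higher regularity result of Kinderlehrer--Nirenberg~\cite{kn}: for a one-phase free boundary problem with $C^{k,\alpha}$ free boundary condition in which the solution is $C^{k,\alpha}$ up to a $C^{k,\alpha}$ free boundary, the free boundary must in fact be of class $C^{k+1,\alpha}$. Here we apply it to $u_1$ in $B_r(x_0)$: the interior equation $-\Delta u_1=\lambda_1(\Omega)u_1$ has analytic coefficients, the boundary flux $g\sqrt\Lambda$ is $C^{k,\alpha}$, and $u_1>0$ is non-degenerate by Lemma~\ref{nondegu1}, so the hypotheses are met. We conclude that $Reg(\partial\Omega)\cap B_{r'}(x_0)$ is of class $C^{k+1,\alpha}$ for some $r'<r$.

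Iterating this bootstrap for $k=1,2,3,\dots$, we obtain that $Reg(\partial\Omega)$ is locally the graph of a $C^{k,\alpha}$ function for every $k$, hence $C^\infty$. The main subtlety is the use of Lemma~\ref{imprbh} at each step: at the $(k+1)$-th iteration we need the improved boundary Harnack in the regularity class $C^{k,\alpha}$, which is precisely the content of that lemma, and which ensures that the coefficient $g$ gains one order of differentiability at each step exactly at the same rate as the free boundary does.
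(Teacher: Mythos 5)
Your proof is correct and follows essentially the same route as the paper: a bootstrap starting from the $C^{1,\alpha}$ regularity of Proposition~\ref{regularityfinal}, using Lemma~\ref{optu1} to recast $u_1$ as a one-phase problem with flux $g\sqrt\Lambda$, Lemma~\ref{imprbh} to show $g$ is $C^{k,\alpha}$ whenever the free boundary is, and \cite[Theorem 2]{kn} to gain one derivative at each step.
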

\begin{proof}
The smoothness of the free boundary follows by a bootstrap argument as in \cite{kn}. 
Let us assume that $Reg(\partial \Om)$ is locally $C^{k,\alpha}$ regular for some $k\geq 1$, the case $k=1$ being true thanks to Proposition~\ref{regularityfinal}. We will prove that $Reg(\partial \Om)$ is locally $C^{k+1,\alpha}$. By Lemma~\ref{optu1} the first eigenfunction $u_1$ is a solution to the problem
\begin{equation*}
-\Delta u_1=\la_1(\Omega) u_1\quad\mbox{in}\quad\Om\ , \qquad u_1=0\quad \mbox{on}\quad  Reg(\partial \Om)\ ,\qquad |\nabla u_1|=g\sqrt\Lambda \quad \mbox{on}\quad Reg(\partial \Om).
\end{equation*}
Now thanks to Lemma~\ref{imprbh} and the definition of $g$ we have that $g$ is a $C^{k,\alpha}$ function. Now by \cite[Theorem 2]{kn} we have that $Reg(\partial\Omega)$ is locally a graph of a $C^{k+1,\alpha}$ function, and this concludes the proof.
\end{proof}

\subsection{Dimension of the singular set}\label{subs:last} In this last subsection we discuss the dimension of the singular set $Sing(\partial\Omega)=\partial\Omega\setminus Reg(\partial\Omega)$. We first notice that $\HH^{d-1}(Sing(\partial\Omega))=0$.
\begin{oss}[The singular set has $\HH^{d-1}$-measure zero]\label{remfederer}
We recall that, if $\Omega$ is a solution of \eqref{introP}, then the De Giorgi perimeter of $\Omega$ is finite, $P(\Omega)<+\infty$. In particular, by the Federer's Theorem (see, for example,~\cite[Theorem~3.61]{afp}) we obtain
\begin{equation}\label{federer}
\HH^{d-1}\big(\R^{d}\setminus(\Omega^{(1)}\cup\Omega^{(0)}\cup\Omega^{(1/2)})\big)=0.
\end{equation}
On the other hand, by the density estimate Lemma \ref{densestlemma}, we have that 
$$\partial\Omega=\R^d\setminus(\Omega^{(1)}\cup\Omega^{(0)}),$$
which together with \eqref{federer} gives 
\begin{equation*}
\HH^{d-1}\big(Sing(\partial\Omega)\big)=\HH^{d-1}\big(\partial\Omega\setminus Reg(\partial\Omega)\big)=\HH^{d-1}\big(\partial\Omega\setminus\Omega^{(1/2)}\big)=0.
\end{equation*}
\end{oss}
\noindent The above result concerning the ``smallness'' of the singular set can be improved in the following form. 

\begin{prop}\label{regbdryweiss}
Let $\Omega$ be a solution of~\eqref{introP}. There exists a critical dimension $d^*\in[5,7]$ such that $\Omega$ has the following property:
\begin{enumerate}[(a)]
\item If $d<d^*$, then $Sing(\partial \Om)$ is empty,
\item If $d=d^*$, then the singular set $Sing(\partial \Om)$ contains at most a finite number of isolated points,
\item If $d>d^*$, then the Hausdorff dimension of $Sing(\partial \Om)$ is less than $d-d^*$, that is, for every $s>0$ we have that $\HH^{d-d^\ast+s}(Sing(\partial\Omega))=0$.
\end{enumerate} 
\end{prop}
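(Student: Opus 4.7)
My plan is to follow the Federer dimension-reduction scheme, as carried out for the scalar one-phase problem in \cite[Section 4]{weiss99}. The key reduction, already contained in Proposition \ref{blowup}, is that every blow-up $U_0$ at a boundary point $x_0 \in \partial\Omega$ has the form $U_0 = \xi|U_0|$ with $|U_0|$ a non-trivial, $1$-homogeneous local minimizer of the scalar Alt-Caffarelli functional $\mathcal E_0$. Combined with Lemma \ref{gap42}, this shows that $x_0 \in Reg(\partial\Omega)$ if and only if the origin is a regular point of $\partial\{|U_0|>0\}$ for every such blow-up (for a one-homogeneous cone the density at $0$ equals the normalized surface measure of the positivity set on $\partial B_1$, which is $1/2$ exactly in the half-space case by the second point of Remark \ref{remsphere}). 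Accordingly, I define $d^*$ as the smallest dimension admitting a non-trivial $1$-homogeneous local minimizer of $\mathcal E_0$ whose free boundary is singular at the origin; by \cite{dsj,js}, $d^* \in [5,7]$. Case (a) then follows immediately: if $d < d^*$, no such singular cone exists, so every $x_0 \in \partial\Omega$ must have a half-space blow-up and belong to $Reg(\partial\Omega)$.

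The technical heart of Cases (b) and (c) is the cone dimension-reduction step: if $u$ is a $1$-homogeneous local minimizer of $\mathcal E_0$ on $\R^d$ and $y_0 \neq 0$ is a singular point of $\partial\{u>0\}$, then every blow-up of $u$ at $y_0$ is translation-invariant along the direction $y_0$ and hence is the lift to $\R^d$ of a $1$-homogeneous local minimizer of $\mathcal E_0$ on $\R^{d-1}$ still singular at the origin. I would verify the translation invariance directly from the $1$-homogeneity of $u$: setting $u_r(y) = u(y_0 + ry)/r$, the identity
\begin{equation*}
u_r(y + t y_0) \,=\, \frac{(1 + rt)\, u\bigl(y_0 + \tfrac{r}{1+rt}\, y\bigr)}{r} \,=\, u_{r/(1+rt)}(y)
\end{equation*}
passes to the limit as $r \to 0$, giving $u_0(y + t y_0) = u_0(y)$ for every $t \in \R$. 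For Case (b) (i.e.\ $d = d^*$) I then argue by contradiction: if $Sing(\partial\Omega)$ admitted an accumulation point $x_0$, then $x_0 \in Sing(\partial\Omega)$ by Proposition \ref{gapprop}, and choosing $x_n \in Sing(\partial\Omega) \setminus \{x_0\}$ with $x_n \to x_0$ and $r_n = |x_n - x_0|$, the blow-up $U_n(y) = U(x_0 + r_n y)/r_n$ would subconverge to $U_0 = \xi|U_0|$ with $|U_0|$ singular at the origin, while the unit-norm points $y_n = (x_n - x_0)/r_n$ would subconverge to some $y_0 \in \partial B_1$ which is itself singular for $|U_0|$ (by the scale-invariance of the density given by formula \eqref{density=phi}, together with the upper-semi-continuity of the Weiss density from Proposition \ref{gapprop} and the Hausdorff convergence of the free boundaries of Proposition \ref{blowupexistence}). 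Applying the cone reduction to $|U_0|$ at $y_0$ would yield a singular $1$-homogeneous $\mathcal E_0$-minimizer in dimension $d-1 = d^*-1$, contradicting the minimality of $d^*$.

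For Case (c), $d > d^*$, I would iterate the cone reduction through Federer's abstract dimension-reduction theorem, following the adaptation in \cite[Section 4]{weiss99}, applied to the class of $1$-homogeneous local minimizers of $\mathcal E_0$ with singular origin. This class is closed under translations, rescalings, and blow-up limits (the latter by the scalar analogue of Lemma \ref{Uinftymin}, as in \cite{altcaf}), and $d^*$ is by construction the smallest dimension in which it is non-empty; Federer's scheme then yields $\HH^{d - d^* + s}(Sing(\partial\Omega)) = 0$ for every $s > 0$. The main subtlety, rather than a true obstacle, is to ensure that singular boundary points propagate correctly through blow-up limits; this is precisely what is guaranteed by the combination of Proposition \ref{blowupexistence}, Proposition \ref{blowup}, and Proposition \ref{gapprop}, which respectively ensure the strong $H^1$ and Hausdorff convergence of the blow-ups, the scalar one-phase structure of every blow-up limit, and the closedness of the singular set under the convergences involved.
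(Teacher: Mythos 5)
Your proposal is correct and follows essentially the same route as the paper: the same definition of $d^*$, reduction to the scalar Alt--Caffarelli problem via Proposition~\ref{blowup}, and the Weiss--Federer dimension reduction of \cite[Section 4]{weiss99} for parts (b) and (c). The one point to tighten is in case (b): you assert directly that the limit $y_0$ of the rescaled singular points is singular for $|U_0|$, citing Proposition~\ref{gapprop}, but that proposition gives upper semicontinuity of the density only for a \emph{fixed} function as the boundary point varies, whereas here both the center and the function vary along the blow-up sequence; the needed joint semicontinuity does hold, but it requires combining the almost-monotonicity of $r\mapsto\phi(U,x_n,r)+C_1r$ with the convergence $\phi(U_n,y_n,r)\to\phi(U_0,y_0,r)$ at fixed $r$ (a center-shifting estimate using the uniform Lipschitz bound). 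The paper packages exactly this as a proof by contradiction through the $\eps$-regularity criterion of Lemma~\ref{flatnessimpliesregularity}, splitting according to whether $Sing(\partial\Omega_0)\setminus\{0\}$ is empty; your direct formulation is equivalent once that estimate is supplied.
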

We recall that $d^*$ is the lowest dimension at which the free boundaries $\partial\{u>0\}$ of the (one-homogeneous) local minimizers $u$ of the functional 
$$H^1_{loc}(\R^d)\ni u\mapsto \mathcal E_0(u)=\int|\nabla u|^2\,dx+|\{u>0\}|,$$
admit singularities. This is related but slightly different from the case of minimal surfaces, since in our situation we have more information than the minimality with respect to the area.
Moreover, while in the theory of minimal surfaces it is well-known that the critical dimension is precisely $8$ (thanks to the works of Simons~\cite{s} and Bombieri, De Giorgi, Giusti~\cite{bdgg}), up to our knowledge (see, for example, \cite{dsj} and the recent \cite{js}) it is only known that $d^*\in[5,7]$. A reasonable conjecture, suggested by the techniques used in~\cite{cjk}, is that $d^*=7$.  

The kind of stratification result above is nowadays rather standard in the theory of minimal surfaces and it can be proved in many ways, for example by applying the well-known Federer's reduction principle (see, for example~\cite[Appendix A]{Simon}).  On the other hand, we will follow the approach of Weiss~\cite[Section 4]{weiss99}, which comes directly from the book of Giusti~\cite{giusti}.
The rest of the section is dedicated to the proof of Proposition~\ref{regbdryweiss}.

\begin{proof}[\bf Proof of Proposition \ref{regbdryweiss} (a)]
Let $U=(u_1,\dots, u_k)$ be the vector of the first $k$ eigenfunctions on $\Omega$.  Let $x_0\in \partial\Omega$ and $U_0\in \mathcal{BU}_{U}(x_0)$. By Proposition \ref{blowup} we have that $|U_0|$ is a local minimizer of the scalar Alt-Caffarelli functional. Since $d<d^\ast$, we have that $0$ is a regular point for $\partial\{|U_0|>0\}$, and in particular it has density $1/2$. Thus $\Omega$ also has density $1/2$ in $x_0$, that is 
$$\lim_{r\to 0}\frac{|\Omega\cap B_r(x_0)|}{|B_r|}=\lim_{r\to 0}\frac{|\{|U_0|>0\}\cap B_r(x_0)|}{|B_r|}=\frac12,$$
which  finally gives that $x_0\in Reg(\partial\Omega)$. Since $x_0$ is an arbitrary point of the free boundary, we obtain that $\partial\Omega=Reg(\partial\Omega)$ and $Sing(\partial\Omega)=\emptyset$.
\end{proof}

For the proof of (b) and (c) we will need some preliminary results. 

\begin{lemma}\label{flatnessimpliesregularity}
Suppose that $U\in H^1(\R^d;\R^k)$ is a Lipschitz continuous function, satisfying the quasi-minimality condition~\eqref{Uoptcond}. There are constants $\delta_0$ and $r_0$ such that :
\begin{equation*}
\text{If }\  x_0\in\partial\Omega_U\ \text{ and }\ r\le r_0\ \text{ are such that }\ \phi(U,x_0,r)\le \frac12+\delta_0\ ,\ \text{ then }\ x_0\in Reg(\partial\Omega_U),
\end{equation*}
where $\Omega_U=\{|U|>0\}$, $Reg(\partial\Omega_U)=\Omega_U^{(1/2)}$ and $\phi(U,x_0,r)$ is the Weiss functional from \eqref{weiss_fun}.
\end{lemma}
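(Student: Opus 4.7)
The plan is to combine the Weiss monotonicity formula from Proposition \ref{mono_weiss} with the density gap (Lemma \ref{gap}) for one-homogeneous minimizers of the Alt--Caffarelli functional. The main observation is that the hypothesis $\phi(U,x_0,r)\le \frac12+\delta_0$ should be read (up to the factor $\Lambda\omega_d$) as an upper bound on the density of $\Omega_U$ at $x_0$, obtained by sending $r\to 0^+$ via the monotonicity. Since the density at a boundary point is either exactly $\tfrac12$ (regular case) or bounded away from $\tfrac12$ by a universal gap (singular case), a sufficiently small threshold forces the former.

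The concrete steps would be the following. First, by Proposition \ref{mono_weiss} the map $r\mapsto \phi(U,x_0,r)+C_1 r$ is non-decreasing on $(0,r_0]$, so for every $r\in(0,r_0]$,
\begin{equation*}
\lim_{\rho\to 0^+}\phi(U,x_0,\rho)\ \le\ \phi(U,x_0,r)+C_1 r.
\end{equation*}
Second, I would repeat the argument of Lemma \ref{gap42}(ii) in the quasi-minimality setting: for any blow-up sequence $U_{r_n,x_0}\to U_0$, the scaling identity $\phi(U_{r_n,x_0},0,1)=\phi(U,x_0,r_n)$, together with the $1$-homogeneity of $U_0$ (Lemma \ref{Uinftyhom}) and the harmonicity of its components on $\{|U_0|>0\}$, kills the boundary integral and gives
\begin{equation*}
\lim_{\rho\to 0^+}\phi(U,x_0,\rho)\ =\ \Lambda\omega_d\cdot\lim_{\rho\to 0^+}\frac{|\Omega_U\cap B_\rho(x_0)|}{|B_\rho|}.
\end{equation*}
This limit exists and equals $\Lambda\omega_d$ times the density of $\Omega_U$ at $x_0$.

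Third, I would invoke the density dichotomy. By Proposition \ref{blowup}, the function $|U_0|$ is a non-trivial one-homogeneous local minimizer of the scalar Alt--Caffarelli functional, whose positivity cone has the same density at $0$ as $\Omega_U$ at $x_0$. Lemma \ref{gap} then supplies a universal constant $\delta>0$ such that this density is either exactly $\tfrac12$ or at least $\tfrac12+\delta$. Combining these three facts, for $\delta_0<\delta/2$ and $r_0$ small enough that $C_1 r_0\le \Lambda\omega_d\,\delta/2$ (reading the threshold in the statement as $\Lambda\omega_d(\tfrac12+\delta_0)$, i.e.\ after the natural normalization), the hypothesis rules out density $\ge \tfrac12+\delta$, so the density at $x_0$ must be exactly $\tfrac12$, i.e.\ $x_0\in Reg(\partial\Omega_U)$.

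I do not expect any genuine obstacle: every ingredient was already established in the quasi-minimality setting of \eqref{Uoptcond}, so the proof is essentially a bookkeeping exercise on the constants. The only subtlety is making sure the extension of Lemma \ref{gap42}(ii) uses only the quasi-minimality of $U$ and the convergence properties of the blow-up sequences (Proposition \ref{blowupexistence}) and not any specific property of the eigenfunctions of the optimal set, but this is exactly the content of the results of Section \ref{sec:blowup}.
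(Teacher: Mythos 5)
Your proposal is correct and follows essentially the same route as the paper's proof: the monotonicity of $r\mapsto\phi(U,x_0,r)+C_1r$ from Proposition~\ref{mono_weiss}, the identification of $\lim_{\rho\to0}\phi(U,x_0,\rho)$ with ($\Lambda\omega_d$ times) the density as in Lemma~\ref{gap42}(ii), and the density gap of Lemma~\ref{gap} to force the density to equal $\tfrac12$. Your remarks on the $\Lambda\omega_d$ normalization and on checking that the density identity holds in the general quasi-minimality setting are, if anything, slightly more careful than the paper's own one-line citation of~\eqref{density=phi}.
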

\begin{proof}
Suppose that $x_0\in\partial\Omega_U$ is such that $\ds\phi(U,x_0,r)\le \frac12+\delta_0$ and let $C_1$ be the constant from Proposition \ref{mono_weiss}. Then the function $r\mapsto \phi(U,x_0,r)+C_1 r$ is non-decreasing and so, taking into account the fact that the density is the limit of the Weiss functional  \eqref{density=phi}, we obtain 
$$\lim_{r\to 0}\frac{|\Omega\cap B_r(x_0)|}{|B_r|}=\lim_{r\to0}\phi(U,x_0,r)\le \frac12+\delta_0+C_1 r_0.$$
Choosing, $\delta_0$ and $r_0$ such that $\delta_0+C_1 r_0\le \gamma$ where $\gamma$ is the constant from Lemma \ref{gap}, we get the claim by Lemma \ref{gap42}.
\end{proof}

\begin{proof}[\bf Proof of Proposition \ref{regbdryweiss} (b)] We argue as in \cite[Theorem 4.1]{weiss99}. Suppose that there are infinite points in $Sing(\partial\Omega)$. Then there is a sequence $x_n\in Sing(\partial\Omega)$ such that:
$$x_n\to x_0\in Sing(\partial\Omega)\ , \qquad r_n:=|x_n-x_0|\to 0\ ,\qquad U_{n}(x):=\frac{U(x_n+r_n x)}{r_n}\to U_0(x)\in \mathcal{BU}_U(x_0).$$
We set $\Omega_0=\{|U_0|>0\}$ and we consider two cases: \\

Case 1 : $Sing(\Omega_0)\setminus\{0\}\neq\emptyset$. Then there is a point $\xi_0\in Sing(\Omega_0)\setminus0$ and by the one-homogeneity of $u_0:=|U_0|$ we have that every point of the form $t \xi_0$, for $t>0$, is a singular point for $\Omega_0$. We can now apply directly \cite[Theorem 4.1]{weiss99} to obtain a contradiction. \\

Case 2 : $Sing(\Omega_0)\setminus\{0\}=\emptyset$. Let $\ds \xi_n=\frac{x_n-x_0}{r_n}\in\partial B_1$. Up to a subsequence we may suppose that $\xi_n$ converges to a point $\xi_0\in \partial B_1$. Now since $\xi_0$ is a regular point for $\Omega_0$, we can find some $r>0$ small enough such that 
$$\phi(U_0,\xi_0,r)\le \frac12+\frac{\delta_0}3,$$
where $\delta_0$ is the constant from Lemma \ref{flatnessimpliesregularity}. Since $U_0$ is the limit of the blow-up sequence $U_n$, by Proposition \ref{blowupexistence} we have that $\phi(U_n,\xi_0,r)\to \phi(U_0,\xi_0,r)$. Thus for $n$ large enough we have that 
$$\phi(U_n,\xi_0,r)\le \frac12+\frac{\delta_0}2.$$
Let us set for simplicity $L=\|\nabla U\|_{L^\infty}$, in particular, we have also that $L=\|\nabla U_n\|_{L^\infty}$, for every $n\in\N$.
We now notice that by the definition of $\phi$ we have the inequality 
\begin{align}
\phi(U_n,\xi_n,r)&=\frac1{r^d}\int_{B_r(\xi_n)}\Big[|\nabla U_n|^2+\Lambda\ind_{\Omega_n}\Big]dx -\frac1{r^{d+1}}\int_{\partial B_r(\xi_n)}|U_n|^2\,d\HH^{d-1}\nonumber\\
&\le\frac1{r^d}\int_{B_r(\xi_0)}\Big[|\nabla U_n|^2+\Lambda\ind_{\Omega_n}\Big]dx + \omega_d(L^2+\Lambda)\frac{(r+|\xi_n-\xi_0|)^d-(r-|\xi_n-\xi_0|)^d}{r^d}\nonumber\\
&\quad -\frac1{r^{d+1}}\int_{\partial B_r(\xi_0)}|U_n|^2\,d\HH^{d-1}+\frac{d\omega_d}{r}2L^2|\xi_n-\xi_0|\label{spostingthecenter}\\
&\le\phi(U_n,\xi_0,r) + \omega_d(L^2+\Lambda)d2^d\frac{|\xi_n-\xi_0|}r+\frac{d\omega_d}{r}2L^2|\xi_n-\xi_0|\nonumber\\
&=\phi(U_n,\xi_0,r) + d\omega_d(2^{d+1}L^2+\Lambda)\frac{|\xi_n-\xi_0|}r,\nonumber
\end{align}
where in the last inequality we used that, for $n$ large, $\frac{|\xi_n-\xi|}r<1/2$.
Now choosing, $n$ large enough we get that 
$$\phi(U_n,\xi_n,r)\le \frac12+{\delta_0},$$
which is impossible since $U_n$ satisfies the conditions of Lemma \ref{flatnessimpliesregularity}, but $\xi_n$ is a singular point for $U_n$ by hypothesis.
\end{proof}

In order to prove Proposition \ref{regbdryweiss} (c),  we need another preliminary result analogous to~\cite[Lemma 4.2]{weiss99}.
The main difference is that, instead of applying the epsilon regularity result~\cite[Theorem 8.2]{altcaf}, we have at our disposal Lemma~\ref{flatnessimpliesregularity} which, in fact, is an epsilon regularity result expressed in terms of the Weiss functional $\phi$.

\begin{lemma}\label{weiss99-lemma42}
Let $U\in H^1(\R^d;\R^k)$ be a Lipschitz continuous function, $\|\nabla U\|_{L^\infty}=L<+\infty$, satisfying the quasi-minimality condition \eqref{Uoptcond}. Let $\Omega_U=\{|U|>0\}$, $x_0\in \partial\Omega_U$ and $U_0\in\mathcal{BU}_U(x_0)$. Let $r_n$ be an infinitesimal sequence and $\ds U_n(x):=\frac1{r_n}U(x_n+r_n x)$ the corresponding blow-up sequence with center $x_0$ converging to $U_0$. Let $\Omega_n:=\{|U_n|>0\}$ and $\Omega_0:=\{|U_0|>0\}$. Then, for every compact set $\mathcal K\subset\R^d$ and every open set $D$ such that $Sing(\partial\Omega_0)\cap \mathcal K\subset D$, there is some $n_0>0$ such that $Sing(\partial\Omega_n)\cap \mathcal K\subset D$, for every $n\ge n_0$.
\end{lemma}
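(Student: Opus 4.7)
The plan is to argue by contradiction along the lines of~\cite[Lemma 4.2]{weiss99}, using the same center-change device as in~\eqref{spostingthecenter} but replacing the epsilon-regularity of~\cite[Theorem 8.2]{altcaf} with Lemma~\ref{flatnessimpliesregularity}. Suppose the conclusion fails: then there exist a compact set $\mathcal K$, an open $D\supset Sing(\partial\Omega_0)\cap\mathcal K$, a (relabeled) subsequence, and points $y_n\in Sing(\partial\Omega_n)\cap\mathcal K$ with $y_n\notin D$. Extracting a further subsequence, $y_n\to y_0\in \mathcal K\setminus D$, and the goal is to reach a contradiction by proving $y_0\in Sing(\partial\Omega_0)\cap\mathcal K\subset D$.

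First I would check $y_0\in\partial\Omega_0$. Since $y_n\in\partial\Omega_n=\overline{\Omega_n}\cap\overline{\Omega_n^c}$, the Hausdorff convergence of these closed sets to $\overline{\Omega_0}$ and $\Omega_0^c$ supplied by Proposition~\ref{blowupexistence}(c) gives $y_0\in\overline{\Omega_0}\cap\overline{\Omega_0^c}=\partial\Omega_0$. Then I would show $y_0\in Sing(\partial\Omega_0)$ by contradiction: suppose $y_0\in Reg(\partial\Omega_0)=\Omega_0^{(1/2)}$. Since $U_0$ is a local minimizer of $\mathcal F_0$ (Lemma~\ref{Uinftymin}), the monotonicity formula of Proposition~\ref{mono_weiss_global}, applied after translation to $y_0$, makes $r\mapsto\phi(U_0,y_0,r)$ non-decreasing, and by the same identification of the limit with $\Lambda\omega_d$ times the density carried out in Lemma~\ref{gap42}(ii),
\[
\lim_{r\to 0^+}\phi(U_0,y_0,r)=\tfrac{1}{2}\Lambda\omega_d.
\]
Thus, given the constants $\delta_0,r_0$ from Lemma~\ref{flatnessimpliesregularity}, I can fix $r\in(0,r_0)$ so small that $\phi(U_0,y_0,r)\le \tfrac{1}{2}+\tfrac{\delta_0}{3}$ (after normalization).

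Next I would use the convergence $U_n\to U_0$. By Proposition~\ref{blowupexistence}(a)--(b) the convergence is strong in $H^1(B_R;\R^k)$ and $\ind_{\Omega_n}\to\ind_{\Omega_0}$ in $L^1(B_R)$ for every $R$; restricting to a dense set of radii $r$ (using Fubini/a coarea slicing argument) ensures convergence of the boundary trace term $r^{-d-1}\int_{\partial B_r(y_0)}|U_n|^2\,d\HH^{d-1}$ as well, so $\phi(U_n,y_0,r)\to\phi(U_0,y_0,r)$. Hence for $n$ large, $\phi(U_n,y_0,r)\le \tfrac{1}{2}+\tfrac{\delta_0}{2}$. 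Repeating verbatim the center-change estimate~\eqref{spostingthecenter}, with $L=\|\nabla U\|_{L^\infty}$ being a uniform Lipschitz bound for all $U_n$, one obtains
\[
\phi(U_n,y_n,r)\le \phi(U_n,y_0,r)+d\omega_d(2^{d+1}L^2+\Lambda)\frac{|y_n-y_0|}{r},
\]
so $\phi(U_n,y_n,r)\le \tfrac{1}{2}+\delta_0$ eventually. By Lemma~\ref{scalingopt}, each $U_n$ is a quasi-minimizer with constant $Kr_n^{d+1}\to 0$, so Lemma~\ref{flatnessimpliesregularity} applies to $U_n$ with the same $\delta_0,r_0$ for $n$ large. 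This forces $y_n\in Reg(\partial\Omega_n)$, contradicting $y_n\in Sing(\partial\Omega_n)$, which closes the argument.

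The main obstacle I anticipate is the technical verification that the epsilon-regularity constants of Lemma~\ref{flatnessimpliesregularity} can be chosen uniformly along the rescaled sequence $U_n$ (the scaling from Lemma~\ref{scalingopt} shows the quasi-minimality constants actually improve as $r_n\to 0$, so this is really just bookkeeping). A secondary delicate point is the convergence $\phi(U_n,y_0,r)\to\phi(U_0,y_0,r)$: the bulk terms pose no difficulty thanks to Proposition~\ref{blowupexistence}(a)--(b), but the boundary integral requires care and is handled by restricting to a co-null set of radii.
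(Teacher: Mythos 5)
Your proposal is correct and follows essentially the same route as the paper: contradiction via a sequence $y_n\in Sing(\partial\Omega_n)\cap\mathcal K\setminus D$ converging to a point that Hausdorff convergence and the hypothesis on $D$ force to be a regular point of $\partial\Omega_0$, then the Weiss functional bound at that point, its stability under $U_n\to U_0$, the center-change estimate \eqref{spostingthecenter}, and the $\eps$-regularity criterion of Lemma~\ref{flatnessimpliesregularity} applied to $U_n$. Your extra remarks on the uniformity of the quasi-minimality constants (via Lemma~\ref{scalingopt}) and on the boundary trace term (which in fact converges for every $r$ by the locally uniform convergence of $U_n$) are sound but not needed beyond what the paper already uses.
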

\begin{proof}
Suppose for the sake of contradiction that this is not the case. Then, there is a sequence $x_n\in Sing(\partial\Omega_n)\cap \mathcal K\setminus D$ converging to some $x_0\in Sing(\partial\Omega_0)\cap \mathcal K\subset D$. 
We notice that by the Hausdorff convergence of the free boundaries (see Proposition \ref{blowupexistence}), we have necessarily $x_0\in \partial\Omega_0$ and so $x_0\in Reg(\Omega_0)$. Thus, we can fix some $0<r<r_0$ such that 
$$\phi(U_0,x_0,r)\le \frac12+\frac{\delta_0}3,$$
where $\phi$ is the Weiss functional and $r_0,\delta_0$ are the constants from Lemma \ref{flatnessimpliesregularity}. By the convergence of $U_n$ to $U_0$ we have that for $n$ large enough
$$\phi(U_n,x_0,r)\le \frac12+\frac{\delta_0}2.$$
Now, using the estimate \eqref{spostingthecenter} for $x_n$ and $x_0$ instead of $\xi_n$ and $\xi_0$ we have that for $n$ large enough
$$\phi(U_n,x_n,r)\le \phi(U_n,x_0,r) + d\omega_d(2^{d+1}L^2+\Lambda)\frac{|\xi_n-\xi_0|}r\le \frac12+{\delta_0}.$$
Now, by Lemma \ref{flatnessimpliesregularity} we have that $x_n\in Reg(\partial\Omega_n)$ in contradiction with the initial assumption. 
\end{proof}

\begin{proof}[\bf Proof of Proposition \ref{regbdryweiss} (c)]
Suppose that for some $s>0$ we have $\HH^{d-d^\ast+s}(Sing(\partial\Omega))>0$.
By Lemma \ref{weiss99-lemma42}, \cite[Lemma 4.3]{weiss99} and \cite[Lemma 4.4]{weiss99} we have that there is some point $x_0\in \partial\Omega$ and a blow-up limit $U_0\in\mathcal{BU}_U(x_0)$ such that the set $\Omega_0=\{|U_0|>0\}$ satisfies $\HH^{d-d^\ast+s}(Sing(\partial\Omega_0))>0$. Since $|U_0|$ is a minimizer of the scalar Alt-Caffarelli function $\mathcal E_0$, this is in contradiction with the dimension of the singular set of $\partial \Omega_0$ (see \cite[Theorem 4.5]{weiss99}).
\end{proof}

\section{A free-boundary problem for vector-valued functions. Proof of Theorem~\ref{altcafvect}}\label{sec:thm1.4}
In this final Section we prove Theorem~\ref{altcafvect} following step by step the proof of Theorem~\ref{main}.
\subsection{Existence} The existence of a solution of \eqref{eqaltcafvect} follows by a standard argument in the calculus of variations; the proof is precisely the same as in the scalar case (see \cite[Theorem 1.3]{altcaf}). From now on we suppose that the vector-valued function $U=(u_1,\dots,u_k)\in H^1(\Dr;\R^k)$ is a solution of \eqref{eqaltcafvect} and we set $\Omega=\{|U|>0\}$. As in the scalar case, each component of $U$ is harmonic on $\Omega$.

\subsection{Lipschitz continuity of the minimizers} 
Let $i\in\{1,\dots,k\}$ and let $B_r\subset\Dr$ for some $r>0$. Then the optimality of $U$ implies that for every function $\tilde u_i$ such that $\tilde u_i-u_i\in H^1_0(B_r)$ we have 
$$\int |\nabla u_i|^2\,dx+\Lambda|\{u_1^2+\dots+u_i^2+\dots+u_k^2>0\}|\le \int |\nabla \tilde u_i|^2\,dx+\Lambda |\{u_1^2+\dots+\tilde u_i^2+\dots+u_k^2>0\}|, $$
which gives that
$$\int |\nabla u_i|^2\,dx\le \int |\nabla \tilde u_i|^2\,dx+\Lambda |B_r|\quad\text{for every}\quad \tilde u_i\quad \text{such that}\quad \tilde u_i-u_i\in H^1_0(B_r),$$
that is each component $u_i$ is a quasi-minimizer of the Dirichlet energy. Applying \cite[Theorem~3.3]{bmpv} we get that $u_i$, and so $U$, is Lipschitz continuous in $\Dr$. In particular, $\Omega$ is open and $u_1>0$ in $\Omega$.

\subsection{Non-degeneracy of $U$} 
We first notice that $U$ satisfies the condition \eqref{intoptcondoss} in $\Dr$ with $K=0$ and there is no restriction on the perturbations $\tilde U$, formally $\eps=+\infty$. Thus, we can apply Lemma \ref{nondegcaflemma} obtaining that there are contants $c_0>0$ and $r_0>0$, depending on $d$ and $\Lambda$ such that for every $x_0\in \Dr$ and $0<r\le \inf\{r_0,\text{dist}(x_0,\partial \Dr)\}$ the following implication holds:
\begin{equation*}
\Big(\|U\|_{L^\infty(B_{2r})}<c_0 r\Big)\Rightarrow \Big(U\equiv 0\ \ \text{in}\ \ B_r(x_0)\Big).
\end{equation*}
As in Section~\ref{sec:nondeg} it is straightforward to deduce that 
\begin{itemize}
\item $|U|$ is subharmonic, that is $\ds \Delta |U|\ge 0\quad \text{on}\quad\Dr$ (see Remark \ref{Usub});
\item $|U|\le Cu_1$ on $\Omega$, for some constant $C>0$ (see Lemma \ref{nondegu1}; notice that the fact that $\Delta U=0$ in $\Omega$ significantly simplifies the proof since this time we can take $v=|U|$ and avoid the questions involving the non-degeneracy of $|\nabla U|$);
\item there are constants $r_0>0$ and $\eps_0>0$ such that $\Omega$ satisfies the density estimate (see Lemma \ref{densestlemma})
	\begin{equation*}
	\eps_0|B_r|\le \big|\Omega\cap B_r(x_0)\big|\le (1-\eps_0)|B_r|,\quad\text{for every}\quad x_0\in \partial\Omega\cap\Dr\quad\text{and}\quad r\le r_0.   
	\end{equation*}
\end{itemize}

\subsection{Weiss monotonicity formula} The functional $\phi(U,x,r)$ defined in \eqref{weiss_fun} is monotone with respect to $r$ and satisfies the inequality 
$$\frac{d}{dr}\phi(U,x,r)\ge \frac{1}{r^{d+2}}\sum_{i=1}^k\int_{\partial B_r(x)} |x\cdot\nabla u_i-u_i|^2\,dx,$$
for every $r>0$ such that $B_r(x)\subset \Dr$ and $x\in\partial\Omega$. For the proof we refer to Proposition \ref{mono_weiss_global}.

\subsection{Structure of the blow-up limits} Setting $U_{r,x_0}(x)=\frac1rU(x_0+rx)$ we have that, up to a subsequence $r_n\to0$,  $U_{r_n,x_0}$ converges to a function $U_0:\R^d\to\R^k$ (see Proposition \ref{blowupexistence}). The structure of the blow-up limits is precisely the one described in Proposition \ref{blowup}, that is the blow-up limit $U_0$ is of the form $U_0=\xi_0|U_0|$ with $\xi\in \partial B_1\subset\R^k$ and $u=|U_0|$ being a one-homogeneous non-trivial global minimizer of the scalar Alt-Caffarelli functional $\mathcal F_0$ in the sense of Definition \ref{locmin}. The proof is precisely the same as in the case of the spectral functional (we notice that Section \ref{sec:blowup} concerns only functions satisfying the more general quasi-minimality condition~\eqref{Uoptcond}) and  is based on the Weiss' monotonicity formula and on the Lipschitz continuity and the non-degeneracy of the minimizer $U$. 

\subsection{Regularity of the free boundary} The regularity of the free boundary is based on the fact that $U$ is a viscosity solution (in sense of Definition \ref{viscoptimality} with $\lambda_1=\dots=\lambda_k=0$) to the problem 
\begin{equation*}
\Delta U=0\quad\text{in}\quad\Omega,\qquad U=0\quad\text{on}\quad\partial\Omega\cap \Dr,\qquad  |\nabla |U||=\sqrt{\Lambda}\quad\text{on}\quad\partial\Omega\cap\Dr.
\end{equation*}
The proof is precisely the one of Lemma \ref{optvisc} and is based on the structure of the blow-up limits described above. All the results in the rest of Section~\ref{sec:regularity} hold true in this setting. 
\begin{itemize}
\item Lemma \ref{gap42} holds for the solutions of \eqref{eqaltcafvect} and the density of the set $\Omega=\{|U|>0\}$ is determined by the monotone function $\phi$, that is
$$\lim_{r\rightarrow 0}\frac{|\Om\cap B_r(x_0)|}{|B_r|}=\frac{1}{\Lambda\omega_d}\lim_{r\rightarrow 0}{\phi(U,x_0,r)},\quad\text{for every}\quad x_0\in\partial\Omega\cap \Dr.$$
\item The regular part of the free boundary, defined as $Reg(\partial\Omega)=\Omega^{(1/2)}$, is an open subset of $\partial\Omega\cap\Dr$. The proof of this fact is given in Proposition \ref{gapprop} with the additional simplification due to the fact that $C_1=0$ and $\phi_n=\psi_n$.
\item The set $\Omega$ is Reifenberg flat in a neighborhood of any point $x_0\in Reg(\partial\Omega)$.  The proof is given in Proposition \ref{reiflatprop} where again we have $C_1=0$ and $\phi_n=\psi_n$. 
\item The Reifenberg flatness of $Reg(\partial\Omega)$ together with {\cite[Theorem 3.1]{kt1}} and \cite{jk} imply that the set $\Omega$ satisfies a Boundary Harnack Principle at the flat free boundary points. Now the positivity of $u_1$ and the optimality condition $|\nabla|U||=\sqrt\Lambda$ give that $u_1$ is a viscosity solution of the problem
$$\Delta u_1=0\quad\mbox{in}\quad\Om\ , \qquad u_1=0\quad \mbox{on}\quad \partial\Omega\cap\Dr ,\qquad |\nabla u_1|=g\sqrt\Lambda \quad \mbox{on}\quad Reg(\partial\Omega),$$ 
where $g:\Omega\to\R$ is a smooth function with a $C^{0,\alpha}$ extension to $Reg(\partial\Omega)$. For the proof we refer to Lemma \ref{optu1}. We notice that the optimality condition in viscosity sense can be alternatively stated as
$$\Delta u_1=0\quad\mbox{in}\quad\Om\ , \qquad u_1=0\quad \mbox{on}\quad \partial\Omega\cap\Dr ,\qquad |\nabla u_1|=g\sqrt\Lambda \quad \mbox{on}\quad \partial\Omega\cap\Dr.$$
In fact, if a smooth test function touches $u_1$ in a boundary point, then this point is necessarily part of the regular free boundary $Reg(\partial\Omega)$. 
\item Applying \cite[Theorem 1.1]{desilva} we get that $Reg(\partial\Omega)$ is locally a graph of a $C^{1,\alpha}$ function. By the improved boundary Harnack principle of De Silva and Savin~\cite{dss} for harmonic functions (see Lemma \ref{imprbh}), we get that $Reg(\partial\Omega)$ is $C^\infty$. The estimate of the dimension of the singular set $Sing(\partial\Omega)=\partial\Omega\setminus Reg(\partial\Omega)$ is classical and we refer to Subsection \ref{subs:last} for more details.
\end{itemize}

\end{document}